\newtheorem{theorem}  {Theorem}
\newtheorem{lemma}[theorem]  {Lemma} 
\newtheorem{corollary}[theorem]  {Corollary}
\newtheorem{fact} [theorem] {Fact}
\newtheorem{claim} [theorem] {Claim}
\newtheorem{assumption}[theorem]  {Assumption}
\newtheorem{conj}{Conjecture}
\def\cA{{\mathcal A}}
\def\cB{{\mathcal B}}
\def\cF{{\mathcal F}}
\def\cG{{\mathcal G}}
\def\cH{{\mathcal H}}
\def\cW{{\mathcal W}}
\def\tF{\tilde{\cF}}  
\def\hF{\dot{\cF}}  
\def\dhF{\ddot{\cF}}  
\def\bF{\bar{\cF}}  
\def\tA{\tilde{\cA}}  
\def\hA{\dot{\cA}}  
\def\dhA{\ddot{\cA}}  
\def\tB{\tilde{\cB}}  
\def\hB{\dot{\cB}}  
\def\dhB{\ddot{\cB}}  
\def\mup{\mu_p}
\def\muA{\mu_p(\cA)}
\def\({\left(}
\def\){\right)}
\def\dual{{\rm dual}}
\def\shiftsto{\to}
\newcommand{\claimproof}{\renewcommand{\qedsymbol}{$\diamond$}}
\begin{document}
\title{Towards extending the Ahlswede--Khachatrian theorem to cross $t$-intersecting families}

\author[sjl]{Sang June Lee}
\address{Department of Mathematics, Duksung Women's University, Seoul 132-714, South Korea}
\email{sanglee242@duksung.ac.kr}
\thanks{The first author was supported by Basic Science Research Program through the National Research Foundation of Korea (NRF) funded by the Ministry of Science, ICT \& Future Planning (NRF-2013R1A1A1059913)}

\author{Mark Siggers}
\address{College of Natural Sciences, Kyungpook National University,
          Daegu 702-701, South Korea}
\email{mhsiggers@knu.ac.kr}
\thanks{The second author was supported by the Korean NRF Basic Science Research Program (2014-06060000) funded by the Korean government (MEST)}

\author{Norihide Tokushige}
\address{College of Education, Ryukyu University, Nishihara, Okinawa 903-0213, Japan}
\email{hide@edu.u-ryukyu.ac.jp}
\thanks{The last author was supported by JSPS KAKENHI 25287031}

\keywords{Cross intersecting families; Erd\H{o}s-Ko-Rado theorem; Ahlswede-Khachatrian theorem; Shifting; Random walks}

\begin{abstract}
  Ahlswede and Khachatrian's diametric theorem is a weighted version of their
  complete intersection theorem, which is itself a well known extension 
  of the $t$-intersecting Erd\H os-Ko-Rado theorem. The complete intersection theorem
  says that the maximum size of a family of subsets of $[n] = \{1, \dots, n\}$,  every pair
  of which intersects in at least $t$ elements, is the size of certain trivially 
  intersecting families proposed by Frankl.  
   We address a cross intersecting version of their diametric theorem.  

  Two families $\cA$ and $\cB$ of subsets of $[n]$ are
  {\em cross $t$-intersecting} if for every $A \in \cA$ and $B \in \cB$,
  $A$ and $B$ intersect in at least $t$ elements. 
  The $p$-weight of a $k$ element subset $A$ of $[n]$ is $p^{k}(1-p)^{n-k}$, and the
  weight of a family $\cA$  is the sum of the weights of its sets.  
  The weight of a pair of families is the product of the weights of the families. 

  The  maximum $p$-weight of a $t$-intersecting family depends on the value of $p$. 
  Ahlswede and Khachatrian showed that for $p$ in the range
  $[\frac{r}{t + 2r -  1}, \frac{r+1}{t + 2r + 1}]$, the 
  maximum $p$-weight of a $t$-intersecting family is that of the family $\cF^t_r$ 
  consisting of all subsets of $[n]$ containing at least $t+r$ elements of the set
  $[t+2r]$.

  In a previous paper we showed a cross $t$-intersecting version of this for large $t$ in
  the case that $r = 0$. In this paper, we do the same in the case that $r = 1$. 
  We show that for $p$ in the range $[\frac{1}{t + 1}, \frac{2}{t + 3}]$
  the maximum $p$-weight of a cross $t$-intersecting pair of
  families, for $t \geq 200$, is achieved when both families are $\cF^t_1$. Further, we show that
  except at the endpoints of this range, this is, up to isomorphism, the only pair of
  $t$-intersecting  families achieving this weight.
\end{abstract}

\maketitle

\section{Introduction}
Let $[n]:=\{1,2,\dots,n\}$ and let $\binom{[n]}{k}$ be the family of all $k$-subsets of $[n]$. For a positive integer $t$, 
the family $\cA\subset 2^{[n]}$ is called \emph{$t$-intersecting} if, for each 
$A,A'\in \cA$, we have $|A\cap A'|\geq t$. 
Erd\H os, Ko, and Rado proved in \cite{EKR} that, 
for each $k$ and $t$, there exists $n_0=n_0(k,t)$ such that if $n\geq n_0$ and
a family of $k$-element subsets $\cA\subset\binom{[n]}k$ is $t$-intersecting,
then $|\cA|\leq\binom{n-t}{k-t}$ with equality holding
if and only if there is some $T\in\binom{[n]}t$ such that
$\cA=\{A\in\binom{{n}}k:T\subset A\}$. 
The exact bound $n_0(k,t)=(t+1)(k-t+1)$ was established by
Frankl \cite{Fckt}, where he introduced the random walk method,  
and independently by Wilson \cite{W}, where he used a linear programming bound due to
Delsarte. 

Frankl also considered the case when $n<(t+1)(k-t+1)$. He defined
$t$-intersecting families $\cF_i^t$ by
\[
\cF_i^t=\cF_i^t(n)=\Big\{F\subset[n]:\big|F\cap[t+2i]\big|\geq t+i\Big\},  
\]
and conjectured that if $\cA\subset\binom{[n]}k$ is $t$-intersecting, 
then 
\[
|\cA|\leq\max_i\big|\cF_i^t\cap\tbinom{[n]}k\big|.  
\]
This conjecture was partially proved by Frankl and F\"uredi in \cite{FF},
and was finally settled by Ahlswede and Khachatrian in the affirmative  
in \cite{AK1} and \cite{AK2}.
This result, now known as the complete intersection theorem, 
is one of the most important results in extremal set theory.

Ahlswede and Khachatrian also obtained the $p$-weight version of their
complete intersection theorem in \cite{AK-p}. This result, which they called the 
diametric theorem, applies to non-uniform families of subsets of $[n]$.
To state the result, we let $p$ be a real number with $0<p<1$, and let $q:=1-p$.
For a family $\cF\subset 2^{[n]}$, the \emph{$p$-weight of $\cF$} is defined by
\[ \mup(\cF) := \sum_{F \in \cF}p^{|F|}q^{n-|F|}. \]
Ahlswede and Khachatrian showed that for $p \leq 1/2$ if $\cF\subset 2^{[n]}$ is 
$t$-intersecting, then 
\begin{equation}\label{AK-result}
\mu_p(\cF)\leq\max_i\mu_p(\cF_i^t). 
\end{equation}
Comparing $\mu_p(\cF_i^t)$ and $\mu_p(\cF_{i+1}^t)$, it can be shown that 
$\max_i\mu_p(\cF_i^t)=\mu_p(\cF_r^t)$ if and only if
\begin{equation}\label{p-range}
 \frac{r}{t+2r-1}\leq p\leq\frac{r+1}{t+2r+1}. 
\end{equation}
All values of $p \in (0,1/2)$ fall into this range for some $r$, larger $p$ yield larger $r$.

For a positive integer $t$, the families $\cA, \cB\subset 2^{[n]}$ are called \emph{cross $t$-intersecting} if, for each $A\in \cA$ and $B\in \cB$, we have $|A\cap B|\geq t$. 
We consider an extension of \eqref{AK-result} to cross $t$-intersecting 
families.
\begin{conj}\label{conj:main}
If $\cA\subset 2^{[n]}$ and $\cB\subset 2^{[n]}$ are cross $t$-intersecting,
then where $r$ is such that $p$ satisfies \eqref{p-range}, 
\[
\mu_p(\cA)\mu_p(\cB)\leq \mu_p(\cF_r^t)^2. 
\]
\end{conj}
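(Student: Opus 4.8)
\medskip
\noindent\textbf{Proof strategy (the case $r=1$, $t$ large).}
As the general conjecture is open, the plan is to establish the case $r=1$: that for $p\in[\tfrac{1}{t+1},\tfrac{2}{t+3}]$ and $t\ge 200$ one has $\mup(\cA)\mup(\cB)\le\mup(\cF_1^t)^2$, and for $p$ in the open interval equality holds only when $\cA=\cB=\cF_1^t$ up to isomorphism. The first step is to reduce to shifted families: replacing $j$ by a smaller index $i$ inside every set that allows it, applied simultaneously to $\cA$ and to $\cB$, preserves the cross $t$-intersecting property and leaves both $p$-weights unchanged, so after iterating we may assume $\cA$ and $\cB$ are shifted. (The uniqueness claim then needs the customary extra care --- tracking the configurations fixed by all shifts, or arguing that a non-isomorphic extremal pair could be shifted into one contradicting the bound.) For shifted families the cross $t$-intersecting condition admits Frankl's lattice-path description: encoding $A\in\cA$ by the $\pm1$-walk that steps up on the elements of $A$ and down on its complement, and likewise for $B\in\cB$, cross $t$-intersection forbids the two walks from both lying too low too early. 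This is the tool that controls the ``tails'': a set $A\in\cA$ whose prefix $A\cap[t+2]$ is small forces $\cB$ to consist of sets with large, tightly constrained prefixes.

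Second, I would localize to the window $[t+2]$. Conditioning on $S:=A\cap[t+2]$ and $T:=B\cap[t+2]$, and writing $\cA_S,\cB_T$ for the links on $[n]\setminus[t+2]$,
\[
\mup(\cA)=\sum_{S\subseteq[t+2]}p^{|S|}q^{\,t+2-|S|}\,\mup(\cA_S),\qquad
\mup(\cB)=\sum_{T\subseteq[t+2]}p^{|T|}q^{\,t+2-|T|}\,\mup(\cB_T).
\]
The cross $t$-intersecting condition becomes: for every pair $(S,T)$ with $|S\cap T|=t-c$, $c\ge1$, the links $\cA_S$ and $\cB_T$ are cross $c$-intersecting (no constraint once $|S\cap T|\ge t$), and the links inherit coherence relations from shiftedness. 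The family $\cF_1^t$ is exactly the ``all-or-nothing'' choice $\mup(\cA_S)=1$ for $|S|\ge t+1$ and $\mup(\cA_S)=0$ for $|S|\le t$, so $\mup(\cF_1^t)=(t+2)p^{t+1}q+p^{t+2}$. The heart of the matter is a trade-off estimate: since $p\approx 1/t$ and there are $\binom{t+2}{2}$ profiles of size $t$, activating one such profile $S$ in $\cA$ with full link is locally tempting --- it adds $\approx p^{t}$ to $\mup(\cA)$ --- but because every other size-$t$ profile $T$, and each size-$(t+1)$ profile $T$ with $S\not\subseteq T$, then has $|S\cap T|<t$ and an unrestricted link, the corresponding $\cB$-links are forced empty, costing $\mup(\cB)$ a quantity of order $t^{2}p^{t}$, which far exceeds the gain. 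More generally, any cross $c$-intersecting pair of nonempty links has weight-product $O(p^{c})$, so all off-diagonal contributions are negligible. Running this bookkeeping with $t\ge200$, so that the $O(1/t)$ errors stay dominated, should pin the maximum of $\mup(\cA)\mup(\cB)$ at the all-or-nothing choice, namely $\cF_1^t\times\cF_1^t$.

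The step I expect to be the main obstacle is precisely this optimization. In the single-family theorem the extremal family must be one of the symmetric $\cF_i^t$; here the pair need not be symmetric, and one has to rule out ``unbalanced'' near-extremal pairs with $\cA$ enlarged past $\cF_1^t$ and $\cB$ correspondingly shrunk. The constraint is bilinear --- $\cA_S$ versus $\cB_T$ cross $c$-intersecting with $c=t-|S\cap T|$ depending on the pair --- so it resists a single convexity argument, and the combinatorics of which profiles on $[t+2]$ can be simultaneously active is delicate. I would try to tame it either by an induction peeling off one level of intersection at a time (bounding cross $t$-intersecting links via cross $(t-1)$-intersecting ones) or by a rearrangement on the $[t+2]$-profiles showing every admissible pair is dominated by a threshold pair. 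Finally, for uniqueness one chases the equality case through the estimates: equality forces the profiles of $\cA$ and of $\cB$ on $[t+2]$ to coincide with those of $\cF_1^t$, after which shiftedness and cross $t$-intersection pin down the links and yield $\cA=\cB=\cF_1^t$; at the endpoints $p=\tfrac{1}{t+1}$ and $p=\tfrac{2}{t+3}$ one has $\mup(\cF_1^t)=\mup(\cF_0^t)$ and $\mup(\cF_1^t)=\mup(\cF_2^t)$ respectively, so $\cF_0^t\times\cF_0^t$ and $\cF_2^t\times\cF_2^t$ are extremal too, and uniqueness holds only on the open interval.
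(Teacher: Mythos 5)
There is a genuine gap, and you concede it yourself: the paragraph beginning ``The step I expect to be the main obstacle is precisely this optimization'' does not resolve the bilinear problem it names, and the two candidate remedies you float (induction peeling off one level of $t$, or a rearrangement to a threshold pair) are left as aspirations rather than arguments. As it stands, the proposal is a plausible plan, not a proof: the ``trade-off estimate'' in the middle paragraph is a back-of-the-envelope heuristic comparing one activated profile against the $O(t^2p^t)$ it costs, but a real proof must bound \emph{simultaneous} deviations of $\cA$ and $\cB$ from $\cF_1^t$, where many profiles on both sides are perturbed at once and the links on $[n]\setminus[t+2]$ are coupled through shiftedness; nothing in the sketch controls that coupling. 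Moreover, your localization to $[t+2]$ throws away precisely the structural handle the paper leans on: for shifted, inclusion-maximal cross $t$-intersecting families one has $\lambda(\cA)+\lambda(\cB)\ge 2t$ (where $\lambda(\cF)$ is the highest line $y=x+\lambda$ hit by every walk of $\cF$), and the parameters $u=\lambda(\cA)$, $v=\lambda(\cB)$ together with the derived $(s,s')$ are what make the case analysis tractable. Your decomposition by $(S,T)=(A\cap[t+2],B\cap[t+2])$ has no analogue of this constraint, so you face the full bilinear family-of-links problem head-on.

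The route is also genuinely different from the paper's, so it is worth saying what each buys. The paper never conditions on $[t+2]$: it works globally with the lattice-path parameters $u,v,s,s'$, dispatches $u+v\ge 2t+1$ and most values of $(s,s')$ by crude weight bounds and a monotonicity argument in $g(s,s')$, and only for a handful of boundary cases $(s,s')\in\{(0,0),(1,0),(1,1),(2,1),(2,2)\}$ does it descend to a structural analysis using duality ($\dual_t$) and the extremal walks $D^\ell_s(i)$. The advantage of the paper's route is that the hard optimization is confined to finitely many small cases where one can compare directly against $\cF_r^t$. Your localization is conceptually cleaner in that it makes the bilinear structure explicit and would, if completed, sit closer to AK-style compression arguments; but without a replacement for $u+v\ge 2t$ and without a concrete mechanism for the trade-off across all pairs $(S,T)$ at once, the optimization you flag as the obstacle remains open. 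For the uniqueness clause, note also that the paper needs a specific lemma (that $s_{ij}(\cA)=s_{ij}(\cB)=\cF_r^t$ forces $\cA=\cB\cong\cF_r^t$, proved via connectivity of a Kneser-type graph) to push equality back through the shifting; your parenthetical ``customary extra care'' is again a placeholder where an argument is needed.
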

With Frankl, in \cite{F}, we 
verified the $r = 0$ case of the above conjecture for $t \geq 14$. 
 In this paper we 
verify the $r=1$ case of the conjecture for $t \geq 200$. 
 This result is perhaps the first
result concerning cross intersecting families, where optimal structures are
different from the so-called trivial structure $\cF_0^t$.
To state our main result we need one more definition.
Two families $\cG_1,\cG_2\in 2^{[n]}$ are \emph{isomorphic}, denoted by $\cG_1\cong \cG_2$,
if there is a permutation $\sigma$ on $[n]$ such that $\cG_1=\Big\{\{\sigma(k): k\in G\}\;:\; G\in \cG_2\Big\}$.

\begin{theorem}\label{thm:main}
Let $n$ and $t$ be integers with $n\geq t\geq 200$, and let $p$ be such that 
 $\frac 1{t+1}\leq p\leq \frac 2{t+3}$.
If $\cA\subset 2^{[n]}$ and $\cB\subset 2^{[n]}$ are 
cross $t$-intersecting, then
\begin{equation}\label{eq:main}
 \mup(\cA)\mup(\cB)\leq \(\mup(\cF^t_1)\)^2=\Big( (t+2)p^{t+1}q + p^{t+2} \Big)^2.
\end{equation}
Moreover, equality holds if and only if one of the following holds:

\begin{enumerate}
\item $\cA=\cB\cong \cF^t_0$ and $p=\dfrac{1}{t+1}$,
\item $\cA=\cB\cong \cF^t_1$ and $\dfrac{1}{t+1}\leq p\leq \dfrac{2}{t+3}$,
\item $\cA=\cB\cong \cF^t_2$ and $p=\dfrac{2}{t+3}$.
\end{enumerate}
\end{theorem}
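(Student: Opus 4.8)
The plan is to reduce the problem to a finite, $n$-independent optimization by the standard shifting-plus-random-walk machinery, then to settle the finite problem by a careful case analysis near the extremal families $\cF^t_0, \cF^t_1, \cF^t_2$. First I would apply compression/shifting operators $S_{ij}$ to the pair $(\cA,\cB)$ simultaneously; shifting preserves cross $t$-intersection and does not decrease $\mu_p(\cA)\mu_p(\cB)$, so I may assume $\cA$ and $\cB$ are both shifted (left-compressed). As in the $r=0$ paper with Frankl, the key advantage of shiftedness is that a shifted $t$-intersecting (or cross $t$-intersecting) family is controlled by its ``trace'' behaviour on initial segments $[m]$, which lets one encode membership by lattice paths and bound the $p$-weight by a random-walk hitting probability. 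Concretely, I would set up a random walk on $\Z$ that moves $+1$ with probability $q$ and $-1$ with probability $p$ (the weight $p^{|F|}q^{n-|F|}$ becomes a path probability after reading the characteristic vector of $F$), and translate ``$\cA$ is cross $t$-intersecting with $\cB$'' into a statement that the relevant walks cannot both avoid certain barriers. Since $p \le 2/(t+3) < 1/2$, the walk has positive drift, so these hitting probabilities are controlled and, crucially, independent of $n$ in the limit; this eliminates $n$ from the problem, reducing \eqref{eq:main} to a bound on a pair of weights of families living on a bounded ground set.

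Next I would isolate the ``non-trivial'' part of each family. Write $\mu_p(\cA) = \mu_p(\cF^t_0) + (\text{correction})$: every set meeting $[t]$ in exactly $t$ elements and using at most... — more precisely, decompose according to $|A \cap [t]|$. Sets with $|A\cap[t]| \ge t$ contribute exactly $\mu_p(\cF^t_0)$-type terms; sets with $|A \cap [t]| = t-1$ or less must, by cross $t$-intersection with the ``large'' sets of $\cB$ (and vice versa), obey strong restrictions. The heart of the matter is a two-variable inequality: if $a = \mu_p(\cA)$ and $b = \mu_p(\cB)$ and one shows $a \le f(b)$ for an explicit decreasing ``exchange'' function $f$ coming from the cross-intersecting constraint (the more weight $\cB$ puts on sets barely meeting $[t]$, the fewer such sets $\cA$ can contain), then $ab \le \max_b b\, f(b)$, and one checks the max is $\mu_p(\cF^t_1)^2$, attained at $b = \mu_p(\cF^t_1)$. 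Establishing the right $f$ — i.e. the precise trade-off between the weight $\cA$ gains by adding sets with small intersection with $[t+2]$ and the weight $\cB$ is thereby forced to lose — is where the random-walk estimates enter quantitatively, and is the step I expect to be the main obstacle; the constant $t \ge 200$ will come out of making these estimates rigorous rather than asymptotic.

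Finally, for the equality characterization, I would run the stability version of the above: track the slack in each inequality used. If $ab = \mu_p(\cF^t_1)^2$ then the exchange inequality $a \le f(b)$ is tight and $b f(b)$ is maximized, forcing $b = a = \mu_p(\cF^t_1)$ except possibly at the endpoints $p = 1/(t+1)$ and $p = 2/(t+3)$, where $b f(b)$ is maximized at a second point as well — precisely $\mu_p(\cF^t_0)$ and $\mu_p(\cF^t_2)$ respectively, since at $p = r/(t+2r-1)$ and $p = (r+1)/(t+2r+1)$ one has $\mu_p(\cF^t_{r-1}) = \mu_p(\cF^t_r) = \mu_p(\cF^t_{r+1})$ at the relevant boundary by \eqref{p-range}. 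Then I would show that a shifted cross $t$-intersecting pair achieving one of these weights must actually equal $(\cF^t_j, \cF^t_j)$ on the nose: any shifted $t$-intersecting family of weight exactly $\mu_p(\cF^t_j)$ with $p$ strictly inside the $j$-th range is $\cF^t_j$ itself (this is essentially the uniqueness in the Ahlswede–Khachatrian diametric theorem restricted to shifted families), and ``un-shifting'' replaces equality by isomorphism, giving cases (i)–(iii). The one subtlety to watch is that cross-intersecting uniqueness is genuinely weaker than intersecting uniqueness — one must rule out lopsided pairs $(\cA,\cB)$ with $\cA \ne \cB$ but $\mu_p(\cA)\mu_p(\cB) = \mu_p(\cF^t_1)^2$ — which again follows from strict concavity-type behaviour of $b f(b)$ away from the endpoints.
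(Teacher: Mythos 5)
Your preliminary reductions match the paper's: shifting to a compressed pair, the random-walk encoding of $p$-weight, and taking $n\to\infty$ so hitting probabilities stabilize are exactly Section~2 and the opening of Section~3. Your endgame is also roughly right: the paper indeed invokes Ahlswede--Khachatrian-type uniqueness for shifted families and a separate ``un-shifting'' lemma (Lemma~\ref{lem:shifting2}) to upgrade equality to isomorphism.

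The genuine gap is in the middle step. You propose to reduce everything to a scalar exchange inequality $\mu_p(\cA)\leq f\bigl(\mu_p(\cB)\bigr)$ for a decreasing $f$, and then maximize $b\,f(b)$. But the cross-$t$-intersecting constraint is a relation between the \emph{families}, not their weights: two families $\cB$, $\cB'$ of equal $p$-weight can impose very different constraints on $\cA$ (already $\cF^t_0$ and $\cF^t_2$ have equal weight at $p=1/(t+1)$ yet forbid different duals). So the function $f$ you need is itself defined by a worst-case optimization over all $\cB$ of weight $b$, and establishing that this inner optimum has a tractable form is essentially the whole theorem again; you identify it as ``the main obstacle'' but offer no mechanism, and I don't see one. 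The paper sidesteps this by never collapsing $\cB$ to a scalar: it records the line $y=x+\lambda$ each family hits (giving $u+v=2t$), a pair of structural parameters $(s,s')$ with $s-s'=(v-u)/2$ locating where the exceptional walks $\hA,\hB$ enter the region $\cF_s^u,\cF_{s'}^v$, and (in the five hard cases) a further pair $(I,J)$ recording the most-shifted walk of type $D^\ell_s(i)$ present in each family. The cross-intersecting constraint is then used in sharp form via the dual walk: $D\in\cA\Rightarrow\dual_t(D)\not\in\cB$, which couples $I$ and $J$ directly. The resulting bound $g(s,s')=f(u,s,p)f(v,s',p)$ is attacked by monotonicity in $(s,s')$ (Claims~\ref{clm:monotonicity}--\ref{clm:g(3,2)}) and by hand for five small cases. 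To salvage your proposal you would essentially have to re-introduce these structural parameters, at which point you are doing the paper's proof. Your decomposition by $|A\cap[t]|$ is also not the one the paper uses (it decomposes by walk-hitting data, which interacts cleanly with shiftedness and duality), though that is a smaller issue.
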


Remark that we do not attempt to optimize the range of $t$. The parts requiring $t$ to be around $200$
are~\eqref{eq:h(u,s,p)} and~\eqref{eq:t=180}.

\smallskip\noindent\textbf{Organization:} 
 In Section \ref{sec:prelims} we introduce some standard definitions and techniques,
 and state some useful results from \cite{F}.
 In Section \ref{sec:setup} we make some quick 
 reductions and setup parameters for the families $\cA$ and $\cB$ by which we
 break the proof down into cases.

 In particular, we
 introduce a pair of parameters $(s,s')$, with  $0 \leq s' \leq s$,  which 
 effectively 
 measures the difference  between our families $\cA$ and $\cB$ and the optimal
 families $\cF^t_0$, $\cF^t_1$ or $\cF^t_2$.
  When $(s,s')$ is one of $(0,0), (1,1), (2,2), (1,0),$ or $(2,1)$,
  then $\cA$ and $\cB$ will be, or
  will be very close to, one of these families.  In this case we have
  to look closely at the structure of our families, and compare them with
  the optimal families directly. This will be done in Section  \ref{sec:non diagonal cases}.

  The remaining cases are dealt with in Section \ref{sec:easy cases}. 
  When $(s,s')$ is not one of the above five values, then $\cA$ and $\cB$
   are very different from the optimal families, so we can expect them to have
   relatively small weight. This
  seems as though it should make computation easier, but there is an added
  difficulty in that we can no longer compute their weight relative to the
  optimal families, rather we must compute these weights directly. That said,
  if $s$ is big, a fairly crude estimation of the weight will suffice, and
  these cases are done in Subsection~\ref{sub:large_s}.
  For the intermediate values of $s$ we consider a finer bound on the size 
  of the families, and use its monotonicity on the range 
  $2 \leq s' \leq s \leq 10$ to show achieve our bound for most of these 
  values.  This finer bound is still too crude for the final five cases. 

  This overall approach is based on the paper \cite{F}, but the monotonicity
  ideas used in Subsection \ref{sub:medium_s} are new. 
  We feel that such ideas will be necessary in proving Conjecture \ref{conj:main}
  for larger values of $r$.    
See \cite{PT} for some recent developments on cross-intersecting families in
different directions.


\section{Preliminaries}\label{sec:prelims}

\subsection{Subset vs. walk on a two-dimensional grid}

It is useful to regard a set $F\subset [n]$ as a walk starting at the origin 
$(0,0)$ of the two-dimensional grid $\mathbb{Z}^2$ as follows.
If $i\in F$, then the $i$-th step is \emph{up} from $(x,y)$ to $(x,y+1)$.
Otherwise, the $i$-th step is \emph{right} from $(x,y)$ to $(x+1,y)$.
For simplicity, we refer to $F\subset [n]$ as a set or a walk. 
See Figure~\ref{fig:walk} for an example.
\begin{figure}
\begin{center}
\includegraphics[scale=0.23]{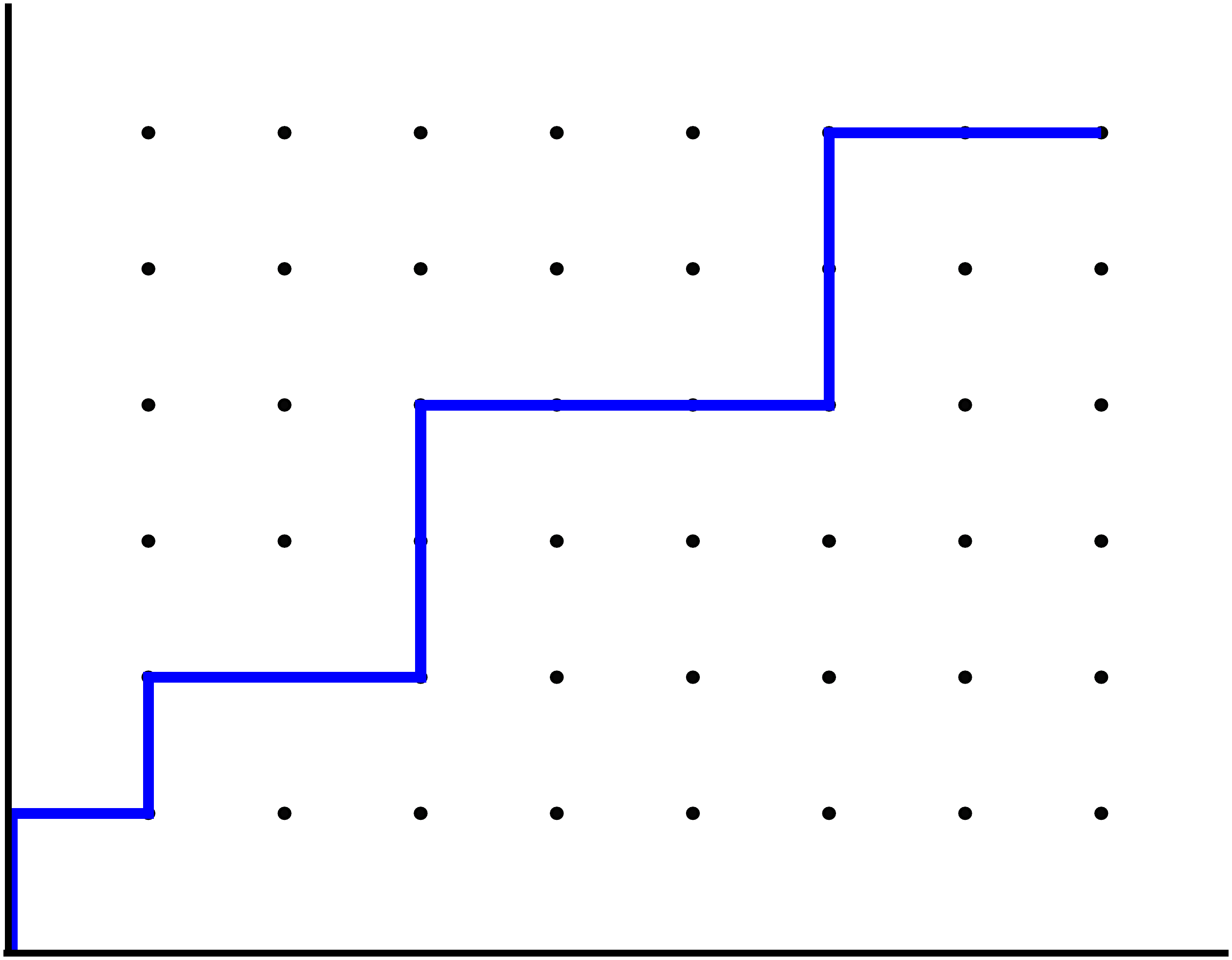}
\end{center}
\caption{The walk $F=\{1, 3, 6, 7, 11, 12\}\subset [14]$}
\label{fig:walk}
\end{figure}

Let $\cF^\ell$ be the family of all walks
that hit the line $y = x + \ell$; that is, let
\[
\cF^\ell=\Big\{F\subset[n]: \, \big|F\cap[j]\big|\geq \frac{j+\ell}{2} \text{ for some }j\Big\}. 
\]
Partition the family $\cF^\ell$ into the following three subfamilies:
\begin{align*}
\tF^\ell:=&\big\{F\in\cF^\ell: \, F \text{ hits $y = x + \ell+1$}\big\},\\
\hF^\ell:=&\big\{F\in\cF^\ell: \, F \text{ hits $y = x + \ell$ exactly once, 
but does not hit $y=x+\ell+1$}\big\}, \\
\dhF^\ell:=
& \big\{F\in\cF^\ell: \, F \text{ hits $y = x + \ell$ at least twice, 
but does not hit $y=x+\ell+1$}\big\}. 
\end{align*}
So we can write 
\begin{equation*}
   \cF^\ell=\tF^\ell\sqcup \hF^\ell \sqcup \dhF^\ell.
\end{equation*}

The following lemmas hold.
\begin{lemma}[\cite{F}, Lemma 2.2 (i,iii)]\label{lem:mu_F} For any positive integer $\ell$, we have the following, where $\alpha=p/q$. 
\begin{enumerate}
\item $\mup(\cF^\ell)\leq \alpha^\ell$ and $\mup(\tF^\ell)\leq \alpha^{\ell+1}$
\item $\mup(\dhF^\ell)\leq \alpha^{\ell+1}$
\end{enumerate}
\end{lemma}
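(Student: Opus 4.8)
The plan is to work entirely within the random walk picture. Identifying a set $F\subseteq[n]$ with its up/right walk, the weight $\mup$ is the law of a length-$n$ walk whose steps are independent and are up with probability $p$ and right with probability $q$, so that $\mup(\cG)$ is just the probability of the event described by $\cG$. Let $S_j$ denote the height $y-x$ after $j$ steps, so $S_0=0$ and each step changes $S$ by $+1$ with probability $p$ and by $-1$ with probability $q$. Then $\cF^\ell$ is exactly the event $\{\max_{0\le j\le n}S_j\ge\ell\}=\{\tau_\ell\le n\}$ with $\tau_\ell:=\min\{j:S_j=\ell\}$, and since a walk hitting $y=x+\ell+1$ must pass through $y=x+\ell$ we also have $\tF^\ell=\cF^{\ell+1}$. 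Thus item (i) reduces to the single inequality $\mup(\cF^\ell)=\Pr[\tau_\ell\le n]\le\alpha^\ell$, applied once with $\ell$ and once with $\ell+1$.

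For this inequality I would use the elementary observation that $M_j:=\alpha^{-S_j}$ is a martingale: a step multiplies it by $\alpha^{-1}$ with probability $p$ and by $\alpha$ with probability $q$, and $p\alpha^{-1}+q\alpha=q+p=1$. Optional stopping at the bounded stopping time $\tau_\ell\wedge n$, together with discarding the nonnegative contribution of $\{\tau_\ell>n\}$, gives $1=M_0=\mathbb{E}[M_{\tau_\ell\wedge n}]\ge\alpha^{-\ell}\Pr[\tau_\ell\le n]$, which is exactly the bound. (Equivalently, this is the gambler's-ruin estimate that an asymmetric $\pm1$ walk with up-probability $p\le\frac12$ ever reaches level $\ell$ with probability at most $\alpha^\ell$; the martingale argument moreover handles the finite horizon with no extra work.)

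For item (ii) I would split the walk at $\tau:=\tau_\ell$, its first visit to height $\ell$. If $F\in\dhF^\ell$ then $\tau\le n$, and since the walk never reaches height $\ell+1$, the step taken at time $\tau+1$ is forced to be a right-step -- this is the only place the hypothesis ``$F$ does not hit $y=x+\ell+1$'' is used. After that step the walk sits at height $\ell-1$, and since it visits height $\ell$ at least twice, the walk restarted at time $\tau+1$ must rise by $1$ at some later point. By the strong Markov property the events $\{\tau=j\}$, ``the step at time $j+1$ goes right'', and ``the restarted walk rises by $1$'' depend on disjoint blocks of the i.i.d. step sequence, so their probabilities multiply; summing over $j$ and using the estimate of (i) -- once with level $\ell$ for $\tau$, once with level $1$ for the restarted walk -- gives
\[
\mup(\dhF^\ell)\ \le\ \Pr[\tau\le n]\cdot q\cdot\alpha\ \le\ \alpha^\ell\cdot q\cdot\alpha\ =\ q\,\alpha^{\ell+1}\ \le\ \alpha^{\ell+1}.
\]

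The one delicate point is exactly this use of the hypothesis in (ii). Without it one could say only that after reaching height $\ell$ the walk returns to that height with probability at most the return probability $2p$ of the asymmetric walk, which yields $\mup(\dhF^\ell)\le 2p\,\alpha^\ell$ -- weaker than $\alpha^{\ell+1}=(p/q)\,\alpha^\ell$ whenever $p<\frac12$. Forcing the first post-$\tau$ step to go right replaces the factor $2p$ by $q\alpha=p\le p/q$, which is what makes the bound close. Everything else -- optional stopping, the strong Markov property, and keeping track of which steps each event depends on -- is routine.
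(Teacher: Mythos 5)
The paper does not prove this lemma; it cites it to [F, Lemma 2.2], so there is no in-paper proof to compare against. Your proposal is correct. The probabilistic reading of $\mup$ as the law of a $p$-biased up/right walk is exactly the framework of the cited literature, and the two substantive steps are sound: the exponential martingale $\alpha^{-S_j}$ with optional stopping at $\tau_\ell\wedge n$ gives $\Pr[\tau_\ell\le n]\le\alpha^\ell$, and the observation $\tF^\ell=\cF^{\ell+1}$ (a walk reaching height $\ell+1$ must first pass through height $\ell$) turns both halves of (i) into one inequality. Your decomposition for (ii) — first hit $\ell$ at time $\tau$, forced right step at $\tau+1$, then a rise of one level — factors over disjoint step blocks by the strong Markov property and in fact yields the slightly stronger bound $q\,\alpha^{\ell+1}$. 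The martingale route is a clean alternative to invoking the gambler's-ruin formula for the infinite walk and then truncating; it handles the finite horizon in one line. The only point worth making explicit in a full write-up is that the ``step at time $\tau+1$'' exists, i.e.\ $\tau<n$; this is automatic for $F\in\dhF^\ell$ because such a walk visits height $\ell$ at least twice and hence $\tau\le n-2$.
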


\begin{lemma}[\cite{F}, Lemma 2.2 (ii)]\label{lem:epsilon}
For every $\epsilon>0$, there exists an $n_0$ such that if 
$n$ and $l$ are integers satisfying $n\geq n_0$ and $l\geq 1$, 
then the following holds: If $\cF\subset 2^{[n]}$
and no walk in $\cF$ hits the line $y=x+\ell$, then
$$\mup(\cF)<1-\alpha^\ell+\epsilon.$$
\end{lemma}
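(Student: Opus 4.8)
The plan is to reinterpret $\mup$ as a probability measure and $\cF^\ell$ as the event that a biased lattice walk reaches the line $y=x+\ell$, and then to show this event has probability arbitrarily close to $\alpha^\ell$ once $n$ is large. Since $\sum_{F\subseteq[n]}p^{|F|}q^{n-|F|}=(p+q)^n=1$, the weight $\mup$ is exactly the law of the first $n$ steps of the infinite walk in which each step is up with probability $p$ and right with probability $q=1-p$; in particular $\mup(\cF)\le 1$ for every $\cF\subseteq 2^{[n]}$. The hypothesis that no walk in $\cF$ hits $y=x+\ell$ says precisely that $\cF\subseteq 2^{[n]}\setminus\cF^\ell$, so $\mup(\cF)\le 1-\mup(\cF^\ell)$, and it suffices to produce, for each $\epsilon>0$, an $n_0$ so that
\[
\mup(\cF^\ell) > \alpha^\ell-\epsilon \qquad\text{for all } n\ge n_0 \text{ and all } \ell\ge 1 .
\]

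For the main estimate I would pass to the displacement $S_j := {}$(number of up-steps)${}-{}$(number of right-steps) among the first $j$ steps, which performs a simple random walk on $\Z$ with $S_0=0$, up-probability $p<1/2$ and down-probability $q>1/2$, and note that $F\in\cF^\ell$ if and only if $\max_{0\le j\le n}S_j\ge\ell$. For the infinite walk, a first-step analysis together with the strong Markov property and translation invariance shows that the probability $h$ of ever reaching level $1$ satisfies $h=p+qh^2$; the roots are $1$ and $p/q=\alpha$, and since $p<q$ the walk drifts to $-\infty$, forcing $h=\alpha<1$. Reaching level $\ell$ means reaching $1$, then $2$, $\dots$, then $\ell$ in turn, so the infinite walk reaches $y=x+\ell$ with probability $\alpha^\ell$. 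The events $\{\max_{j\le n}S_j\ge\ell\}$ increase with $n$ to $\{\sup_j S_j\ge\ell\}$, so by continuity of measure $\mup(\cF^\ell)\to\alpha^\ell$ from below as $n\to\infty$, for each fixed $\ell$.

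It remains to make $n_0$ uniform in $\ell$, which comes from a dichotomy. Given $\epsilon>0$, fix an integer $L$ with $\alpha^L<\epsilon$ (possible since $0<\alpha<1$). For $\ell\ge L$ the conclusion is immediate, because $1-\alpha^\ell+\epsilon>1\ge\mup(\cF)$. For each of the finitely many values $\ell\in\{1,\dots,L-1\}$, the convergence above gives an $n_\ell$ with $\mup(\cF^\ell)>\alpha^\ell-\epsilon$ whenever $n\ge n_\ell$; setting $n_0:=\max_{1\le\ell<L}n_\ell$ then yields, for all $n\ge n_0$ and all $\ell\ge1$, the bound $\mup(\cF)\le 1-\mup(\cF^\ell)<1-\alpha^\ell+\epsilon$.

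There is no serious obstacle here; the one subtlety is precisely this uniformity in $\ell$ — one is not allowed to choose $n_0$ after seeing $\ell$ — and it is defused by observing that the target bound is vacuous as soon as $\alpha^\ell$ drops below $\epsilon$, leaving only finitely many $\ell$ to treat by the (standard, but per-$\ell$) convergence of the finite-walk hitting probabilities to $\alpha^\ell$. It is also worth recording explicitly at the outset that $\mup$ is a probability measure on $2^{[n]}$ and that the standing assumption $p<1/2$ (equivalently $\alpha<1$) is what makes $\alpha^\ell$ simultaneously the true hitting probability of the infinite walk and a quantity tending to $0$ as $\ell\to\infty$.
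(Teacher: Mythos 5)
Your proposal is correct: the reduction $\mup(\cF)\le 1-\mup(\cF^\ell)$, the gambler's-ruin computation giving hitting probability $\alpha^\ell$ for the infinite walk (using $\alpha<1$), the monotone convergence $\mup(\cF^\ell)\uparrow\alpha^\ell$ for fixed $\ell$, and the dichotomy at $\alpha^L<\epsilon$ that makes $n_0$ uniform in $\ell$ are all sound, and the uniformity point you flag is exactly the one subtlety. The present paper gives no proof of this statement (it is imported from the cited reference, whose argument is likewise a random-walk hitting-probability estimate), so your write-up is essentially the standard intended argument, with $n_0$ depending on the fixed $p$ (equivalently $\alpha$) as well as $\epsilon$, which is all the application here requires.
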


\subsection{Inclusion maximal and shifted families}\label{sec:shifted}
A family $\cF\subset 2^{[n]}$ is called \emph{inclusion maximal} if $F\in \cF$ and $F\subset F'$ imply $F'\in \cF$.
\begin{fact}\label{fact:inclusion_maximal} If $\cA, \cB$ are cross $t$-intersecting families in $2^{[n]}$, then there are inclusion maximal cross $t$-intersecting
families $\cA', \cB'\in 2^{[n]}$ such that $\cA\subset \cA'$ and $\cB\subset \cB'$.
\end{fact}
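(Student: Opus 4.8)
The plan is to take $\cA'$ and $\cB'$ to be the upward closures of $\cA$ and $\cB$ and to check directly that this choice meets all three requirements. Explicitly, set
\[
\cA' := \{ S \subseteq [n] : A \subseteq S \text{ for some } A \in \cA \}, \qquad
\cB' := \{ S \subseteq [n] : B \subseteq S \text{ for some } B \in \cB \}.
\]
Taking $S = A$ (resp. $S = B$) shows $\cA \subseteq \cA'$ and $\cB \subseteq \cB'$. Each of $\cA', \cB'$ is inclusion maximal in the sense of the definition preceding the statement: if $S \in \cA'$ and $S \subseteq S'$, choose $A \in \cA$ with $A \subseteq S$; then $A \subseteq S'$, so $S' \in \cA'$, and the identical argument applies to $\cB'$.

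The one point that actually invokes the hypothesis — and the only step I regard as requiring any thought — is that $\cA'$ and $\cB'$ remain cross $t$-intersecting, given that we have enlarged \emph{both} families simultaneously. Fix $S \in \cA'$ and $T \in \cB'$, and pick $A \in \cA$ with $A \subseteq S$ and $B \in \cB$ with $B \subseteq T$. Then $S \cap T \supseteq A \cap B$, hence $|S \cap T| \geq |A \cap B| \geq t$ since $\cA, \cB$ are cross $t$-intersecting. Thus $\cA'$ and $\cB'$ are cross $t$-intersecting, which completes the verification.

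I do not anticipate a genuine obstacle here. Since $[n]$ is finite, no Zorn/limiting argument is needed, and the containment $S \cap T \supseteq A \cap B$ above is precisely what makes upward closure compatible with the cross $t$-intersecting condition. One might wonder whether the statement ought instead to ask for a pair to which no further set can be added without destroying cross $t$-intersection, but the definition in force only requires upward closedness, and the construction above already delivers that; this is also exactly the form needed later, since the shifting operations of Subsection~\ref{sec:shifted} are applied to upward-closed families, and passing from $(\cA,\cB)$ to $(\cA',\cB')$ only increases the product $\mu_p(\cA)\mu_p(\cB)$, so it is harmless for the upper bound in Theorem~\ref{thm:main}.
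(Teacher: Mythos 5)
Your proof is correct: taking upward closures and observing that $S\cap T\supseteq A\cap B$ preserves the cross $t$-intersecting property is exactly the intended argument, which the paper states as a Fact without proof precisely because it is this routine. Nothing further is needed.
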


For $F\subset [n]$ and $i,j\in [n]$, let
\[ 
s_{ij}(F) := \begin{cases} 
      (F \setminus \{j\})\cup \{i\}  & \text{if } F\cap\{i,j\}=\{j\} 
       \text{ and } (F \setminus \{j\})\cup \{i\}\not\in\cF,\\
    F & \text{otherwise.} \\
              \end{cases}  
\]
Then, for $\cF\subset 2^{[n]}$, let $$s_{ij}(\cF):=\{s_{ij}(F):F\in\cF\}.$$ 
A family $\cF$ is called \emph{shifted} if $s_{ij}(\cF) = \cF$ 
for all $1\leq i < j \leq n$. 
Here we list some basic properties concerning shifting operations.

\begin{lemma}[\cite{F}, Lemma 2.3]\label{lem:shifting}
Let $1\leq i<j\leq n$ and let $\cF,\cG\subset 2^{[n]}$.
\begin{enumerate}
\item Shifting operations preserve the $p$-weight of a family, that is,
$\mu_p(s_{ij}(\cG))=\mu_p(\cG)$.
\item If $\cF$ and $\cG$ in $2^{[n]}$ are cross $t$-intersecting families, 
then $s_{ij}(\cF)$ and $s_{ij}(\cG)$ are cross $t$-intersecting 
families as well.
\item For a pair of families we can always obtain a pair of 
shifted families by repeatedly shifting families simultaneously 
finitely many times.
\end{enumerate}
\end{lemma}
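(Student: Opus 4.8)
The final statement to prove is Lemma 2.3 from the reference [F], restated in this excerpt as Lemma~\ref{lem:shifting}, with three parts about shifting operations on families.

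\textbf{Proof plan for Lemma~\ref{lem:shifting}.}

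For part (i), the plan is to observe that the shifting operation $s_{ij}$ acts on $\cG$ as a bijection from $\cG$ to $s_{ij}(\cG)$ that preserves cardinalities of sets. Concretely, for each $G \in \cG$ either $s_{ij}(G) = G$, or $s_{ij}(G) = (G\setminus\{j\})\cup\{i\}$, which has the same size as $G$. The only thing to check is that $s_{ij}$ is injective on $\cG$, so that $|s_{ij}(\cG)| $ counts each image once with the correct weight: if $s_{ij}(G_1) = s_{ij}(G_2)$, then considering the three possible ``types'' of each $G_k$ (contains $j$ not $i$ and gets shifted; contains $j$ not $i$ but $(G_k\setminus\{j\})\cup\{i\}$ already in $\cG$; or does not have exactly $\{j\}$ in position $\{i,j\}$) one recovers $G_1 = G_2$ by comparing intersections with $\{i,j\}$. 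Since $s_{ij}$ is a weight-preserving bijection, $\mu_p(s_{ij}(\cG)) = \sum_{G\in\cG} p^{|s_{ij}(G)|}q^{n-|s_{ij}(G)|} = \sum_{G\in\cG}p^{|G|}q^{n-|G|} = \mu_p(\cG)$.

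For part (ii), the plan is to show that $s_{ij}(\cF)$ and $s_{ij}(\cG)$ remain cross $t$-intersecting. Take $A' \in s_{ij}(\cF)$ and $B' \in s_{ij}(\cG)$, with preimages $A \in \cF$ and $B\in \cG$. We must show $|A'\cap B'| \geq t$. If neither set was actually shifted, then $A' = A$, $B' = B$ and we are done. The main case is when at least one of them is shifted, say $A' = (A\setminus\{j\})\cup\{i\}$ with $i \notin A$, $j\in A$. If $i \in A'\cap B'$ we can compare $A'\cap B'$ with $A\cap B$ directly; the delicate subcase is when $j \in B'$ but $i\notin B'$ (so the shift removed the common element $j$ from $A'$ without $i$ compensating). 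Here one uses the definition of $s_{ij}$: since $B' \in s_{ij}(\cG)$ and $B'$ contains $j$ but not $i$, the set $(B'\setminus\{j\})\cup\{i\}$ must already lie in $\cG$ (otherwise $B'$ would have been shifted). Call this set $B''\in \cG$. Then $|A \cap B''| \geq t$ because $A \in \cF$, $B''\in\cG$. Now $A\cap B''$ and $A'\cap B'$ differ only in how they treat $i$ and $j$: $A\cap B''$ contains $i$ (from $B''$, and $i\notin A$ so actually it does not — one must be careful here) — the argument is the standard one, tracking membership of $i$ and $j$ across the four sets $A, A', B', B''$ and checking $|A'\cap B'| \geq |A\cap B''| \geq t$ or $|A'\cap B'|\geq |A\cap B|\geq t$ in each configuration. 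This bookkeeping is the technical heart of part (ii). The symmetric case where $B'$ is shifted (or both are) is handled the same way.

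For part (iii), the plan is a standard potential-function / termination argument. Define, for a family $\cF$, the quantity $w(\cF) = \sum_{F\in\cF}\sum_{i\in F} 2^i$; each simultaneous shift $s_{ij}$ with $i < j$ applied to a pair $(\cF,\cG)$ either leaves both families unchanged or strictly decreases $w(\cF) + w(\cG)$, because replacing $j$ by a smaller index $i$ in a set strictly decreases that set's contribution. Since $w(\cF)+w(\cG)$ is a nonnegative integer, only finitely many strict decreases are possible, so after finitely many simultaneous shifts we reach a pair $(\cF^*, \cG^*)$ with $s_{ij}(\cF^*) = \cF^*$ and $s_{ij}(\cG^*) = \cG^*$ for all $1\leq i<j\leq n$, i.e. both are shifted; by parts (i) and (ii) this pair has the same weights and is still cross $t$-intersecting. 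The one subtlety is that we must apply the operations to the two families \emph{simultaneously} (the same $(i,j)$ to both) and iterate over all pairs repeatedly until no pair produces a change, which is exactly what ``repeatedly shifting families simultaneously finitely many times'' means; the potential argument guarantees this process halts.

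The main obstacle is the casework in part (ii): verifying that cross $t$-intersection survives when exactly one of the two sets is shifted and the shift deletes a common element, which forces one to invoke the ``not already present'' clause in the definition of $s_{ij}$ and carefully compare four sets at once. Parts (i) and (iii) are routine once the bijectivity and the monotone potential are set up.
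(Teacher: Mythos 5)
Your proposal is correct and follows the standard shifting argument (weight-preserving bijection for (i), the four-set bookkeeping using the ``not already present'' clause for (ii), and a strictly decreasing integer potential for (iii)), which is exactly the approach of the cited proof in \cite{F}; this paper itself states the lemma without reproving it. The one wrinkle in your sketch of (ii) — the momentary confusion about whether $i\in A\cap B''$ — resolves correctly, since both $A\cap B''$ and $A'\cap B'$ are disjoint from $\{i,j\}$ and agree outside it, giving $|A'\cap B'|=|A\cap B''|\geq t$ as you intend.
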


The following lemma,  which mimics a proposition in \cite{AK1}, that  
is in turn based on an idea from \cite{Fckt}, was stated in \cite{F}, but
its proof was only sketched.  We prove it now for all values of $r$, though
we only need it for $r = 0,1,$ and $2$. 

\begin{lemma}\label{lem:shifting2}
Let $t\geq 2$ and let $\cA,\cB\subset 2^{[n]}$ be 
cross $t$-intersecting families. 
If $s_{ij}(\cA)=s_{ij}(\cB)=\cF_r^t$, then $\cA=\cB\cong\cF_r^t$.
\end{lemma}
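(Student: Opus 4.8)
The goal is to show that if $s_{ij}(\cA)=s_{ij}(\cB)=\cF_r^t$, then already $\cA=\cB\cong\cF_r^t$; that is, the single shift $s_{ij}$ was not actually needed. First I would observe that the shift only moves sets across the pair $\{i,j\}$, so $\cA$ and $\cB$ differ from $\cF_r^t$ only in sets containing $j$ but not $i$. Write $\cF_r^t = \cF_0 \sqcup \cF_j$, where $\cF_j$ consists of those $F \in \cF_r^t$ with $j \in F$, $i \notin F$, and $(F\setminus\{j\})\cup\{i\} \notin \cF_r^t$; note that since $\cF_r^t$ is itself shifted (as $i<j$), in fact $\cF_j = \emptyset$ unless the indices $i,j$ interact with the special coordinate block $[t+2r]$ in the right way — so the only interesting case is when $i,j \in [t+2r]$ or similar boundary positions. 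The set $\cA$ must then have the form $\cF_0 \sqcup \cA_j$ where $\cA_j \subseteq \{(F\setminus\{j\})\cup\{i\} : F \in \cF_j\}$ together with possibly the original sets of $\cF_j$; more precisely each $F \in \cF_j$ has exactly one preimage under $s_{ij}$ in $\cA$, namely either $F$ itself or $(F\setminus\{j\})\cup\{i\}$, and similarly for $\cB$.

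Next I would exploit cross $t$-intersection to pin down those choices. Suppose for contradiction that $\cA \neq \cF_r^t$, so there is some $F$ with $(F\setminus\{j\})\cup\{i\} \in \cF_j$ but $F \in \cA$ (the "un-shifted" version). I want to produce a set $B \in \cB$ with $|F \cap B| < t$, contradicting cross $t$-intersection. The natural candidate is a set $B \in \cF_r^t$ (hence, after checking, in $\cB$ or with its shifted partner in $\cB$) that intersects $(F\setminus\{j\})\cup\{i\}$ in exactly $t$ elements, using the coordinate $i$; then removing $i$ and the fact that $i \notin F$ drops the intersection with $F$ below $t$. One has to be slightly careful: the witness $B$ might itself be a "bad" set whose shifted partner, not $B$ itself, lies in $\cB$; but if $B$ uses coordinate $i$ and not $j$, then $B = s_{ij}(B)$ is fixed by the shift, hence $B \in \cB$ directly. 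This is exactly the kind of argument in \cite{AK1}; the key is that $\cF_r^t$ is rich enough to supply such tight witnesses $B$ whenever the "defect" set $F$ exists. Running this for both $\cA$ and $\cB$ forces $\cA = \cB = \cF_r^t$ — but wait, we want $\cong$, not literal equality, so actually the conclusion is: either no shift was needed ($\cA=\cB=\cF_r^t$), or the only freedom is a relabeling of $i$ and $j$, giving $\cA = \cB = \sigma(\cF_r^t)$ for the transposition $\sigma = (i\,j)$, whence $\cA = \cB \cong \cF_r^t$.

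To make this clean I would set it up symmetrically: let $\cA'_j$ be the sets of $\cA$ containing $j$ but not $i$ that get shifted, and $\cA''_i$ their images; define $\cB'_j, \cB''_i$ likewise. Each of $\cA, \cB$ is obtained from $\cF_r^t$ by replacing, for each relevant pair, the $i$-version by the $j$-version in some subset of positions. The cross $t$-intersecting condition between a $j$-version $F \in \cA$ and the $i$-versions remaining in $\cB$ (together with all the common sets) is the binding constraint. I would argue that if $\cA$ contains even one $j$-version $F$, then every set of $\cB$ that met $s_{ij}(F)$ in exactly $t$ coordinates via $i$ must have been replaced in $\cB$ as well — propagating, this shows $\cB$ also contains the corresponding $j$-versions, and then symmetry (swapping roles) forces $\cA$ and $\cB$ to contain \emph{all} the $j$-versions, i.e. $\cA = \cB = (i\,j)\cdot \cF_r^t$.

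\textbf{Main obstacle.}
The technical heart is the combinatorial bookkeeping of which sets in $\cF_r^t$ actually have $j \in F$, $i \notin F$, and shifted-partner outside $\cF_r^t$: this depends sensitively on whether $i,j$ lie inside or outside the block $[t+2r]$, and on the " at least $t+r$ in $[t+2r]$" threshold. When $j \le t+2r < $ nothing or when both are outside, $\cF_j = \emptyset$ trivially and there is nothing to prove; the real case is $i, j$ straddling or inside the block, where one must confirm that $\cF_r^t$ still contains the tight witnesses $B$ needed to run the intersection argument. I expect that checking these witnesses exist in all configurations — essentially that $[t+2r]$ has enough room, which needs $n \geq t+2r$ and $t \geq 2$ — is the one place requiring genuine care rather than routine manipulation.
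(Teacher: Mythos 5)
Your high-level strategy matches the paper's: reduce to the case $i\in[t+2r]$, $j\notin[t+2r]$, observe that $\cA$ and $\cB$ each arise from $\cF_r^t$ by swapping some of the minimal sets $H\cup\{i\}$, $H\in\cH:=\binom{[t+2r-1]}{t+r-1}$, for $H\cup\{j\}$, and use cross $t$-intersection to force either no swaps or all swaps. However, there is a genuine gap. Your first argument --- that the witness $B$ lies in $\cB$ because $i\in B$, $j\notin B$ implies ``$B=s_{ij}(B)$ is fixed by the shift, hence $B\in\cB$'' --- is simply wrong: $s_{ij}(\cB)=\cF_r^t$ guarantees only that $B$ has \emph{some} preimage in $\cB$ under $s_{ij}$, and that preimage may be $(B\setminus\{i\})\cup\{j\}$ rather than $B$ itself. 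This is not a technicality to be smoothed over; it is exactly the difficulty the lemma addresses.

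Your refined propagation argument has the right shape --- a swap at $H$ in $\cA$ forces a swap at every $H'\in\cH$ with $|H\cap H'|=t-1$ in $\cB$, and vice versa --- but you assert rather than prove that the propagation reaches all of $\cH$. This is equivalent to the auxiliary graph $G_0$ on $\cH$ with edges at intersection size $t-1$ (a Kneser graph on the $r$-element complements in $[t+2r-1]$) being connected \emph{and non-bipartite}. Non-bipartiteness is essential: if $G_0$ were bipartite with parts $P_1,P_2$, then $\cA$ swapping exactly $P_1$ and $\cB$ swapping exactly $P_2$ would survive your propagation and still be cross $t$-intersecting. The paper supplies precisely this missing piece, showing that the bipartite propagation graph on $\cH\sqcup\cH'$ is the bipartite double cover of $G_0$ and is therefore connected because $G_0$ is non-bipartite for $t\geq 2$. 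That is where the hypothesis $t\geq 2$ enters, and it never surfaces in your sketch --- a symptom that the core of the argument has been left out.
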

\begin{proof}
 First we remark that if $\cA\cong\cF_r^t$, then $\cB=\cA$.
Further, if $i,j\in[t+2r]$ or $i,j\not\in[t+2r]$, then $\cA=\cF_r^t$ and we are done.
So without loss of generality we may assume that $i=t+2r$ and $j=n$.
Define two subfamilies $\cA_1$ and $\cA_2$ of $\cA$ by
\begin{align*}
\cA_1&=\{A\in\cA:|A|=t+r,\,i\not\in A,\, j\in A,\,(A\cup\{i\})\setminus\{j\}\not\in\cA\},\\
\cA_2&=\{A\in\cA:|A|=t+r,\,i\in A,\, j\not\in A,\,(A\setminus\{i\})\cup\{j\}\not\in\cA\}.
\end{align*}
Since $s_{ij}(\cA)=\cF_r^t$, we have 
$|A\cap[t+2r-1]|=t+r-1$ for all $A\in\cA_1\cup\cA_2$.
If $\cA_1=\emptyset$, then $\cA=\cF_r^t$.
If $\cA_2=\emptyset$, then 
$\cA=\{A\subset[n]:|A\cap([i-1]\cup\{j\})|\geq t+r\}\cong\cF_r^t$. 
So we may assume that $\cA_1\neq\emptyset$ and $\cA_2\neq\emptyset$.

Let $\cH=\binom{[t+2r-1]}{t+r-1}$.
Then for every $H\in\cH$ we have $H\cup\{j\}\in\cA_1$ or $H\cup\{i\}\in\cA_2$
(but not both). Thus we can identify $\cH$ with $\cA_1\sqcup\cA_2$.
We also define $\cB_1$ and $\cB_2$ in the same manner. 
Let $\cH'$ be a copy of $\cH$, and identify $\cH'$ with $\cB_1\sqcup\cB_2$.

Now we define a bipartite graph $G$ on $V(G)=\cH\sqcup\cH'$, by letting
$\{H,H'\}$ be an edge, for $H\in\cH$ and  $H'\in\cH'$,  if $|H\cap H'|=t-1$.  
We claim that $G$ is a connected graph. Indeed, the graph $G_0$ defined 
on $\cH$ by letting $\{H_1,H_2\}$ be an edge if $|H_1\cap H_2|=t-1$, is Kneser's
graph; and this is connected and non-bipartite for $t>1$.
If $G$ is not connected, then each connected
component is isomorphic to $G_0$, which contradicts the fact that $G_0$
is not bipartite. This shows that $G$ is connected. 

Therefore, there is a path from $A\in\cA_1$ to $B\in\cB_2$
in $G$, and on this path there is 
an edge $\{A_1,B_2\}$ where $A_1\in\cA_1$,  $B_2\in\cB_2$, or 
an edge $\{A_2,B_1\}$ where $A_2\in\cA_2$,  $B_1\in\cB_1$.
But then $|A_1\cap B_2|=t-1$ or $|A_2\cap B_1|=t-1$, which contradicts
the fact that $\cA$ and $\cB$ are cross $t$-intersecting. 
\end{proof}

 Fact~\ref{fact:inclusion_maximal} and Lemmas~\ref{lem:shifting} allow us to assume that
 $\cA$ and $\cB$ are inclusion maximal and shifted in proving the inequalities in 
 Theorem \ref{thm:main}. Lemma~\ref{lem:shifting2}, allows us to extend this assumption
 to the uniqueness results in the case of equality in the Theorem. We record this as
 the following assumption.

\begin{assumption}\label{assum:maximal_shifted}
$\cA$ and $\cB$ are inclusion maximal and shifted.
\end{assumption}

\section{Setup for proof of Theorem~\ref{thm:main}}\label{sec:setup}

Recall that $n$ and $t$ are integers with $n\geq t\geq 200$, and $p$ is a real number with
$\frac{1}{t+1}\leq p\leq\frac {2}{t+3}$. Set $q=1-p$ and $\alpha=p/q$.
The following holds.
\begin{lemma}[\cite{F}, Lemma 2.12] Let $f(n)$ be the maximum of $\mup(\cA)\mup(\cB)$ 
over all pairs $\cA$ and $\cB$ of cross $t$-intersecting families in $2^{[n]}$. Then,
$f(n)\leq f(n+1).$
\end{lemma}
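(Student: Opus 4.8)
The plan is to lift an optimal pair of cross $t$-intersecting families on $[n]$ to a pair on $[n+1]$ of equal weight-product. Since $2^{[n]}$ is finite, the maximum defining $f(n)$ is attained; fix cross $t$-intersecting $\cA,\cB\subset 2^{[n]}$ with $\mup(\cA)\mup(\cB)=f(n)$. For a family $\cF\subset 2^{[n]}$ let its \emph{cylindrification} be
\[
\cF^+ := \{F : F\in\cF\}\cup\{F\cup\{n+1\} : F\in\cF\}\subset 2^{[n+1]},
\]
the family of all subsets of $[n+1]$ whose trace on $[n]$ lies in $\cF$.

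First I would check weight preservation. Computing $\mup$ with ground set $[n+1]$, each $F\in\cF$ contributes the two terms $p^{|F|}q^{(n+1)-|F|}$ and $p^{|F|+1}q^{n-|F|}$, whose sum is $p^{|F|}q^{n-|F|}(q+p)=p^{|F|}q^{n-|F|}$; this is exactly the contribution of $F$ to $\mup(\cF)$ computed with ground set $[n]$. Since $[n]$ does not contain $n+1$, the $2|\cF|$ sets listed above are pairwise distinct, so summing over $F\in\cF$ gives $\mup(\cF^+)=\mup(\cF)$.

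Next I would check that $\cA^+$ and $\cB^+$ are cross $t$-intersecting in $2^{[n+1]}$. If $A'\in\cA^+$ and $B'\in\cB^+$, then by construction $A'\cap[n]\in\cA$ and $B'\cap[n]\in\cB$, so $|(A'\cap[n])\cap(B'\cap[n])|\geq t$ by hypothesis; as this set is contained in $A'\cap B'$, we get $|A'\cap B'|\geq t$. Hence $(\cA^+,\cB^+)$ is an admissible pair for $f(n+1)$, and therefore $f(n+1)\geq\mup(\cA^+)\mup(\cB^+)=\mup(\cA)\mup(\cB)=f(n)$.

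The argument involves no real obstacle; the only point requiring attention is that $\mup$ depends on the ambient ground set through the exponent of $q$, and the identity $q+p=1$ is precisely what compensates for replacing $[n]$ by $[n+1]$.
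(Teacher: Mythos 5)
Your proof is correct: the cylindrification argument (lifting an optimal pair on $[n]$ to $2^{[n+1]}$ by freely adding or omitting $n+1$, using $p+q=1$ to preserve the weight and the trace argument to preserve cross $t$-intersection) is exactly the standard proof of this monotonicity, which the paper itself does not reproduce but cites from \cite{F}. Nothing is missing; the attainment of the maximum and the preservation of both weight and the cross-intersecting property are all justified.
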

We may therefore assume that $n$ is arbitrarily large.


For $\cF\subset 2^{[n]}$, let $\lambda(\cF)$ be the maximum integer $\lambda \geq 0$ 
such that all walks in $\cF$ hit the line $y = x + \lambda$.
Let $u=\lambda(\cA)$ and $v=\lambda(\cB)$. The following holds.
\begin{lemma}[\cite{F}, Lemma 2.11(ii)]
If $\cA$ and $\cB$ are shifted, inclusion maximal, cross $t$-intersecting families in $2^{[n]}$, then
$\lambda(A)+\lambda(B)\geq 2t.$
\end{lemma}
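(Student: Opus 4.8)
The plan is to argue by contradiction. Suppose $u:=\lambda(\cA)$ and $v:=\lambda(\cB)$ satisfy $u+v\le 2t-1$; I will then exhibit $A'\in\cA$ and $B\in\cB$ with $|A'\cap B|\le t-1$, contradicting the cross $t$-intersecting hypothesis. By maximality of $u$ there is a walk $A\in\cA$ that does not hit the line $y=x+u+1$, i.e.\ $2|A\cap[j]|-j\le u$ for every $j$; likewise there is a walk $B\in\cB$ with $2|B\cap[j]|-j\le v$ for every $j$. Rewriting these as $|A\cap[j]|\le\tfrac{j+u}{2}$ and $|B\cap[j]|\le\tfrac{j+v}{2}$ and adding, the integer $|A\cap[j]|+|B\cap[j]|$ is at most $j+\tfrac{u+v}{2}\le j+t-\tfrac12$, so
\begin{equation}\label{eq:lambda-sum-bound}
|A\cap[j]|+|B\cap[j]|\le j+t-1\qquad\text{for all }j.
\end{equation}

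Next I would construct a (large) set $W$ by scanning $j=1,2,\dots,n$: put every $j\notin B$ into $W$, and put $j\in B$ into $W$ exactly when $|W\cap[j-1]|<|A\cap[j]|$, i.e.\ only when the part of $W$ chosen so far is about to fall behind the walk of $A$. A one-line induction on $j$ gives $|W\cap[j]|\ge|A\cap[j]|$ for all $j$. Moreover, each time some $j\in B$ is actually placed into $W$, the inclusion $[j-1]\setminus B\subseteq W$ lets us rewrite the triggering inequality $|W\cap[j-1]|<|A\cap[j]|$ as
\[
|W\cap B\cap[j-1]| < |A\cap[j]|+|B\cap[j]|-j \le t-1
\]
by \eqref{eq:lambda-sum-bound}. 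Since this holds at each step at which a member of $B$ enters $W$, at most $t-1$ such steps occur, i.e.\ $|W\cap B|\le t-1$.

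Finally, let $A'$ consist of the $|A|$ smallest elements of $W$ (this is legitimate since $|W|=|W\cap[n]|\ge|A\cap[n]|=|A|$). The inequalities $|W\cap[j]|\ge|A\cap[j]|$ say exactly that the $i$-th smallest element of $A'$ is at most the $i$-th smallest element of $A$ for $i=1,\dots,|A|$; hence $A'$ is obtained from $A$ by a sequence of shifting operations, so $A'\in\cA$ because $\cA$ is shifted. Since $A'\subseteq W$ we get $|A'\cap B|\le|W\cap B|\le t-1<t$, which together with $A'\in\cA$ and $B\in\cB$ is the desired contradiction. Therefore $\lambda(\cA)+\lambda(\cB)\ge 2t$.

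The only step that is more than bookkeeping is the construction of $W$ in the second paragraph — in effect a defect form of Hall's theorem, producing a set whose walk lies weakly above that of $A$ yet meets $B$ in few points; combined with the standard fact that a shifted family is closed under downward shifts, the remaining manipulations of the walk description are routine, so I would not anticipate further obstacles.
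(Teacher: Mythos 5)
The paper does not reprove this lemma; it is quoted as Lemma~2.11(ii) of \cite{F}, so there is no in-paper argument to compare against. Your proof is correct and self-contained modulo the standard fact (Fact~\ref{fact:shift_1} in the paper) that shifted, inclusion-maximal families are closed under the $\shiftsto$ relation. The key chain of deductions holds up under scrutiny: the maximality of $u,v$ gives walks $A,B$ with $|A\cap[j]|+|B\cap[j]|\le j+t-1$ for all $j$ (integrality absorbing the $-\tfrac12$); the greedy set $W$ satisfies $|W\cap[j]|\ge|A\cap[j]|$ by a clean induction; rewriting the trigger condition via $[j-1]\setminus B\subseteq W$ and $|B\cap[j-1]|=|B\cap[j]|-1$ does give $|W\cap B\cap[j-1]|\le t-2$ at every insertion step, hence $|W\cap B|\le t-1$; and the initial segment $A'$ of $W$ satisfies $(A')_i\le(A)_i$ because $|W\cap[(A)_i]|\ge|A\cap[(A)_i]|=i$, so $A\shiftsto A'$ and $A'\in\cA$, contradicting cross $t$-intersection. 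Your greedy/Hall-type construction of $W$ is a pleasantly direct way to produce a witness in $\cA$ meeting $B$ in fewer than $t$ elements; the only point worth tightening in a final write-up is to invoke Fact~\ref{fact:shift_1} explicitly rather than the phrase ``obtained from $A$ by a sequence of shifting operations,'' since that phrase papers over precisely the shifted-plus-inclusion-maximal hypothesis the lemma carries.
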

Therefore, we assume that $u+v\geq 2t$. If $u + v \geq 2t+1$, then Lemma~\ref{lem:mu_F} gives that
 \[ \mup(\cA)\mup(\cB)\leq \mup(\cF^u)\mup(\cF^v) \leq 
\alpha^u\alpha^v\leq \alpha^{2t+1}.\]

One can check that $\alpha^{2t+1}< 0.99 \big(\mup(\cF_1^t)\big)^2$ for $t \geq 26$. Indeed, 
\begin{align*}
\frac{\big(\mup(\cF_1^t)\big)^2}{\alpha^{2t+1}} &\geq (t+2)^2pq^3(q^t)^2\geq (t+2)^2pq^3\frac{1}{e^4},
\end{align*}
where the second inequality follows from
\begin{equation}\label{eq:q^t}
\frac{1}{e^2} < \(1 - \frac{2}{t+3}\)^{t+3} < \(\frac{t+1}{t+3}\)^t \leq q^t\leq \(\frac{t}{t+1}\)^t \leq 0.5.
\end{equation}
Since $pq^3$ is increasing in $p$ for $p\leq 0.25$, we have that
\begin{equation*}
\frac{\big(\mup(\cF_1^t)\big)^2}{\alpha^{2t+1}} \geq \frac{(t+2)^2 t^3}{e^4(t+1)^4}>1.02,
\end{equation*}
where the last inequality holds for $t\geq 26$. 

Therefore, we assume that $$u+v=2t.$$ Without loss of generality, let $$u\leq v.$$

Note that $\cA \subset \cF^u$. So $\cA$ is partitioned as $\cA=\tA\sqcup \hA \sqcup \dhA$, where
\[
 \,  
\tA := \cA \cap \tF^u, \, \hA:=\cA \cap \hF^u,  \,\text{ and } \,  \dhA := \cA \cap \dhF^u.   
\]
Similarly, we have that $\cB=\tB\sqcup \hB \sqcup \dhB$, where
\[
\tB := \cB \cap \tF^v, \, \hB:=\cB \cap \hF^v,  \,\text{ and } \,  \dhB := \cB \cap \dhF^v.   
\]

If $\hA=\emptyset$, then $\cA= \dhA\cup \tA$, and hence, $$\muA=\mup(\tA)+\mup(\dhA)\leq \mup(\tF^u)+\mup(\dhF^u)\leq \alpha^{u+1}+\alpha^{u+1},$$
where the last inequality follows from Lemma~\ref{lem:mu_F}.
Thus, we have that
\begin{equation*}
\mup(\cA)\mup(\cB)\leq(\alpha^{u+1}+\alpha^{u+1})\alpha^v
\leq 2\alpha^{2t+1}< 0.99\(\mup(\cF_1^t)\)^2,
\end{equation*}
where the last inequality holds for $t\geq 110$.
Similarly, we have that if $\hB=\emptyset$, then, for $t\geq 110$, $$\mup(\cA)\mup(\cB)<0.99 \(\mup(\cF_1^t)\)^2.$$  
So~\eqref{eq:main} holds if $\hA=\emptyset$ or $\hB=\emptyset$.

We may therefore assume that $\hA\neq\emptyset$ and $\hB\neq\emptyset$. Recall that $$\cF^\ell_i=\Big\{F\subset[n]:\big|F\cap[\ell+2i]\big|\geq \ell+i\Big\}.$$
That is, $\cF^\ell_i$ is the family of walks hitting $(i, i+k)$ for some $k\geq \ell$.
Note that as $\hA$ and $\hB$ are non-empty, there exist non-negative integers $s$ and $s'$ such that $\hA\cap\cF_s^u\neq\emptyset$ and 
$\hB\cap\cF_{s'}^v\neq\emptyset$. 
The next lemma tells us that such $s$ and $s'$  are unique. Its statement has been modified, 
but it is essentially Lemma 3.2 of~\cite{F}.

\begin{lemma}[\cite{F}, Lemma 3.2]\label{lem:structure} Suppose that $\hA\neq\emptyset$ and $\hB\neq\emptyset$. Then,
there exist unique non-negative integers $s$ and $s'$ such that
 \begin{equation*}
 \cA_s:=\hA\sqcup\dhA\subset\cF_s^u \hskip 1em
\mbox{and} \hskip 1em \cB_{s'}:=\hB\sqcup\dhB\subset\cF_{s'}^v.\end{equation*} Moreover, $s-s'=(v-u)/2.$
In particular, $s\geq s'$.
\end{lemma}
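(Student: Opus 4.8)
The plan is to prove Lemma~\ref{lem:structure} by first establishing, for a single shifted inclusion-maximal family $\cA$ with $\hA\neq\emptyset$, that there is a \emph{unique} non-negative integer $s$ with $\hA\cap\cF^u_s\neq\emptyset$, and that moreover $\hA\sqcup\dhA\subset\cF^u_s$. The key point is that a walk $F\in\hF^u$ hits the line $y=x+u$ exactly once and never reaches $y=x+u+1$; if that unique touching point is $(i,i+u)$, then $F\in\cF^u_i$ and $F\notin\cF^u_{i'}$ for any $i'\neq i$ among walks of $\hF^u$, because hitting $(i',i'+u)$ would be a second point on the line $y=x+u$ (if $i'\neq i$) or would force reaching $y=x+u+1$. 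So the map sending $F\in\hF^u$ to the abscissa of its unique touch of $y=x+u$ is well-defined, and $s$ must be the common value of this map on all of $\hA$. The first thing I would do, then, is show this map is constant on $\hA$.

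The constancy follows from shiftedness. Suppose $F,F'\in\hA$ touch $y=x+u$ at $(i,i+u)$ and $(i',i'+u)$ respectively, with $i<i'$. Since $\cA$ is shifted and inclusion maximal, one shows $\cA$ must then contain a walk that touches the line at some abscissa $<i$, or one derives a contradiction with the maximality of $u=\lambda(\cA)$ — concretely, using shifting one can push the later touch-point of $F'$ leftward, and inclusion-maximality lets us add coordinates, eventually producing a walk in $\cA$ that avoids $y=x+u$ altogether, contradicting $u=\lambda(\cA)$; or alternatively producing a walk touching $y=x+u+1$, which would put it in $\tA$ not $\hA$. Having fixed $s$ for $\cA$ and $s'$ for $\cB$, the containment $\cA_s=\hA\sqcup\dhA\subset\cF^u_s$ is then immediate: every walk of $\hA$ touches $(s,s+u)$ hence lies in $\cF^u_s$, and every walk of $\dhA$ touches $y=x+u$ at least twice without reaching $y=x+u+1$, so in particular it touches the line at abscissa $s$ (by the same shiftedness argument forcing all touch-points to be compatible) and hence lies in $\cF^u_s$. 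Uniqueness of $s$ is clear since $\hA\cap\cF^u_s\neq\emptyset$ pins it down.

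For the relation $s-s'=(v-u)/2$, I would use the cross $t$-intersecting condition together with the extremal touch-points. Pick $F\in\hA$ touching $(s,s+u)$ and $G\in\hB$ touching $(s',s'+v)$. Because $F$ never goes above $y=x+u+1$ and $G$ never above $y=x+v+1$, for large $j$ both walks are close to these diagonals, and one can choose $F$ and $G$ (using shiftedness to take the ``lowest'' such walks, essentially the boundary walks of $\cF^u_s$ and $\cF^v_{s'}$) so that $|F\cap G|$ is as small as the structure permits; the cross $t$-intersecting inequality $|F\cap G|\geq t$ then becomes an equality-forcing constraint. Computing $|F\cap G|$ for the canonical boundary walks — $F$ going up $s+u$ times within $[s+2\cdot\tfrac{?}{}]$... more precisely $F$ realizing $|F\cap[u+2s]|=u+s$ with the rest of the walk as low as possible, and similarly for $G$ — gives $|F\cap G|$ in terms of $u,v,s,s'$, and setting this $\geq t$ while using $u+v=2t$ yields $s-s'=(v-u)/2$; the parity statement that $v-u$ is even also drops out here (or from $u+v=2t$ being even together with... ). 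Since $u\leq v$ we get $s\geq s'$.

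The main obstacle I expect is the constancy-of-touch-point argument — making rigorous that shiftedness plus inclusion-maximality forces all walks in $\hA$ to touch $y=x+u$ at the same abscissa, rather than just at \emph{some} abscissa. One has to rule out a ``staircase'' of different touch-points coexisting, and the cleanest route is probably: if two distinct abscissas $i<i'$ both occur, apply the shift $s_{i'+\text{(down-step)},\,\dots}$ repeatedly to realize a walk touching at $i$ but with extra freedom later, then invoke inclusion-maximality to fill in coordinates and escape the line $y=x+u$, contradicting $\lambda(\cA)=u$. Getting the bookkeeping of which coordinates to shift exactly right, without accidentally raising a walk into $\tF^u$, is the delicate part; everything else ($\cA_s\subset\cF^u_s$, uniqueness of $s$, and the arithmetic $s-s'=(v-u)/2$) is routine once the touch-point is pinned down. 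Since this is essentially Lemma 3.2 of~\cite{F} with a modified statement, I would lean on that proof and only re-derive the parts affected by the modification.
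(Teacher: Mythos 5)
The central gap is in your claim that shiftedness plus inclusion-maximality force all walks of $\hA$ to touch $y=x+u$ at the same abscissa. That is false without the cross $t$-intersecting hypothesis: take $\cA=\cF^u$, which is shifted and inclusion maximal with $\lambda(\cA)=u$, yet $\hA=\hF^u$ contains walks whose unique touch of $y=x+u$ occurs at every possible abscissa. Your proposed mechanism also does not work even heuristically: shifting and adding elements both move a walk \emph{up-and-left} (earlier up-steps, more up-steps), so they can only produce walks that hit $y=x+u$ earlier or hit $y=x+u+1$ --- you can never ``escape the line $y=x+u$ altogether,'' and producing a walk that lands in $\tA$ rather than $\hA$ is no contradiction at all, since $\tA\subset\cA$ is perfectly allowed. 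Likewise, a walk of $\dhA$ touching the line at several abscissas is not forced to touch at the particular $s$ just because the walks of $\hA$ do, so the step ``every walk of $\dhA$ ... in particular touches the line at abscissa $s$'' is not ``immediate'' from what precedes it.

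What actually pins down $s$ (and forces the $\dhA$-walks to pass through $(s,s+u)$) is the interplay between $\cA$ and $\cB$ via the dual: if $B\in\cB$, then $\dual_t(B)\notin\cA$ (Fact~\ref{fact:dual}), and Fact~\ref{fact:shift_2} then forbids $A\to\dual_t(B)$ for every $A\in\cA$, which is a geometric constraint saying that each $A$ must cross the reflected walk $\dual_t(B)$. Choosing $B$ to be a suitably extremal walk of $\hB$ is what excludes the ``wrong'' touch-abscissas in $\cA$, and symmetrically for $\cB$; only then is there a single admissible $s$ for $\cA_s=\hA\sqcup\dhA$. In your proposal the cross-intersecting condition is invoked only for the bookkeeping identity $s-s'=(v-u)/2$, but it is in fact already indispensable for the existence of $s$ and $s'$. (Once existence is known, uniqueness is indeed trivial, as you say, since a walk in $\hF^u$ lies in exactly one $\cF^u_i$, and the parity/arithmetic is routine.) So the ``main obstacle'' you flagged is not merely delicate bookkeeping --- the route you sketched for it would fail --- and the fix is to bring in $\cB$ via duality at that step, not afterwards.
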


Here, we record our {\bf setup}. 
\begin{itemize}
\item $\cA$ and $\cB$ are shifted maximal cross $t$-intersecting families. 
\item $n$ may be assumed to be arbitrarily large.  
\item $q = 1-p$ and $\alpha=p/q$.
\item $t\geq 200$, and $\frac{1}{t+1}\leq p\leq\frac {2}{t+3}$, so $\frac{t+1}{t+3}\leq q\leq\frac {t}{t+1}$.
\item $u+v=2t$ and $1\leq u\leq t\leq v\leq 2t$.
\item $s\geq s' \geq 0$ and
$s-s'=(v-u)/2$. 
\item  $u=t-s+s'$ and $v=t+s-s'$.
\item $\cA=\hA\sqcup\dhA\sqcup\tA\subset\cF^u$ and 
$\cB=\hB\sqcup\dhB\sqcup\tB\subset\cF^v$.
\item 
$\hA\neq\emptyset$, $\hB\neq\emptyset$,  $\hA\sqcup\dhA\subset\cF_s^u$, 
and $\hB\sqcup\dhB\subset\cF_{s'}^v$.
\end{itemize}

\section{Almost all cases}\label{sec:easy cases}

The rest of the proof is broken down into cases based on the value of $(s,s')$. 
We first deal with the cases with  $s \geq 10$. Then we  spend the rest of the
section  reducing  the remaining cases to the five final cases which will be proved in
Section \ref{sec:non diagonal cases}.

\subsection{ Large values of $s$}\label{sub:large_s}

Let 
\begin{equation}\label{def:bF}
\bF^\ell_i:=(\hF^\ell\cup\dhF^\ell)\cap\cF^\ell_i.
\end{equation}
We use the following key estimation from \cite{F}.
\begin{claim}[\cite{F}, Claim 3.3]\label{clm:estimation}
There is an integer $n_0$ such that if $n>n_0$, then
\[
 \mup(\tF^\ell\cup\bF^\ell_i)<f(\ell,i,p)\cdot 1.001,
\]
where 
\begin{equation}\label{eq:f(ell,i,p)}
 f(\ell,i,p):=\alpha^{\ell+1}+\binom{\ell+2i}i\frac{\ell+1}{\ell+i+1}p^{\ell+i}q^i(1-\alpha).
\end{equation}
\end{claim}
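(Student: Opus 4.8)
The plan is to compute the $p$-weight of $\tF^\ell\cup\bF^\ell_i$ by decomposing it according to the structure of the walks, exactly as in the partition $\cF^\ell=\tF^\ell\sqcup\hF^\ell\sqcup\dhF^\ell$, and to bound each piece separately. By definition $\tF^\ell\cup\bF^\ell_i=\tF^\ell\sqcup\big((\hF^\ell\cup\dhF^\ell)\cap\cF^\ell_i\big)$, so
\[
\mup(\tF^\ell\cup\bF^\ell_i)=\mup(\tF^\ell)+\mup\big((\hF^\ell\cup\dhF^\ell)\cap\cF^\ell_i\big).
\]
For the first term, Lemma~\ref{lem:mu_F}(i) already gives $\mup(\tF^\ell)\leq\alpha^{\ell+1}$, which is the first summand of $f(\ell,i,p)$. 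The substance of the claim is therefore the estimate $\mup\big((\hF^\ell\cup\dhF^\ell)\cap\cF^\ell_i\big)<\binom{\ell+2i}{i}\frac{\ell+1}{\ell+i+1}p^{\ell+i}q^i(1-\alpha)\cdot 1.001$ (absorbing the $1.001$ factor to cover the $\dhF$ contribution and lower-order terms).

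The key step is to enumerate the walks in $(\hF^\ell\cup\dhF^\ell)\cap\cF^\ell_i$ by their behavior up to the coordinate $\ell+2i$. A walk $F$ lies in $\cF^\ell_i$ exactly when $|F\cap[\ell+2i]|\geq\ell+i$; a walk in $\hF^\ell\cup\dhF^\ell$ hits the line $y=x+\ell$ but not $y=x+\ell+1$. The dominant contribution comes from walks that reach the point $(i,\ell+i)$ — i.e. have exactly $\ell+i$ up-steps among the first $\ell+2i$ — and then, from $(i,\ell+i)$ onward, never again hit $y=x+\ell+1$, i.e. stay in the region $y\leq x+\ell$ while being free to touch $y=x+\ell$. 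The number of lattice paths from $(0,0)$ to $(i,\ell+i)$ that do not cross the line $y=x+\ell+1$ is $\frac{\ell+1}{\ell+i+1}\binom{\ell+2i}{i}$ by the ballot/cycle lemma, and each such initial segment has weight $p^{\ell+i}q^{i}$. The tail of the walk — from $(i,\ell+i)$ to its endpoint, constrained to stay weakly below $y=x+\ell$, i.e. the reflected picture is a walk that never hits the line $y=x+1$ — contributes, by Lemma~\ref{lem:epsilon} (applied with $\ell=1$, so the bound is $1-\alpha+\epsilon$), a factor of at most $1-\alpha+\epsilon<(1-\alpha)\cdot 1.0005$ once $n$ (hence $n_0$) is large enough, since $\epsilon$ can be taken arbitrarily small relative to $1-\alpha$. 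Multiplying the three factors gives the displayed bound $f(\ell,i,p)$ up to the harmless $1.001$ slack, which also swallows the walks that overshoot (have more than $\ell+i$ up-steps in $[\ell+2i]$, or hit $y=x+\ell$ before step $\ell+2i$); these form a geometrically smaller correction, and in any case one can invoke Lemma~\ref{lem:mu_F}(ii) ($\mup(\dhF^\ell)\leq\alpha^{\ell+1}$, which is $\alpha$ times smaller than the main term for $p$ small) to control the $\dhF$ part crudely.

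The main obstacle I expect is making the "walks that overshoot or turn early are negligible" step rigorous while keeping the constant at $1.001$ rather than something larger: one must check that summing over all valid first-$(\ell+2i)$-step profiles (not just the extremal one reaching $(i,\ell+i)$) and over all tail lengths really does stay within a factor $1.001$ of the single term written in $f(\ell,i,p)$. This is where the hypothesis $n>n_0$ is used — it lets us replace the finite-$n$ tail sum by its $n\to\infty$ limit with error $\epsilon$ — and where a careful but routine geometric-series estimate, together with the smallness of $p\leq 2/(t+3)$ and hence of $\alpha$, does the job. Since this is Claim 3.3 of~\cite{F}, the cleanest route is to quote that argument verbatim; I would reproduce the path-counting identity and the reduction to Lemma~\ref{lem:epsilon} as above and refer to~\cite{F} for the bookkeeping of the error terms.
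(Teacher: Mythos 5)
The paper does not actually prove this claim; it is quoted verbatim as Claim~3.3 of~\cite{F}, so there is no in-paper proof to compare against. Judged on its own terms, your sketch does identify the right two-step structure: bound $\mup(\tF^\ell)\leq\alpha^{\ell+1}$ via Lemma~\ref{lem:mu_F}(i); then bound $\mup(\bF^\ell_i)$ by observing that every walk in $\bF^\ell_i$ must pass through $(i,\ell+i)$, counting the initial segments to $(i,\ell+i)$ that avoid $y=x+\ell+1$ via the reflection formula~\eqref{eq:la2.13ii} (with $(x_0,y_0,c)=(i,\ell+i,\ell+1)$, giving $\binom{\ell+2i}{i}\tfrac{\ell+1}{\ell+i+1}$ and first-segment weight $p^{\ell+i}q^i$), and bounding the tail by $1-\alpha+\epsilon$ via Lemma~\ref{lem:epsilon}. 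With $\epsilon<0.001(1-\alpha)$ the $1.001$ factor absorbs $\epsilon$, and that is the entire role of the $1.001$ and of the hypothesis $n>n_0$.

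Two points in your reasoning are wrong and would mislead a reader, even though they do not invalidate the final bound. First, there are no ``overshoot'' walks to swallow: any $F\in\bF^\ell_i$ avoids $y=x+\ell+1$ and satisfies $|F\cap[\ell+2i]|\geq\ell+i$; the non-hitting condition at step $\ell+2i$ forces $|F\cap[\ell+2i]|\leq\ell+i$, so equality holds and $F$ passes through exactly $(i,\ell+i)$. Walks that touch $y=x+\ell$ before step $\ell+2i$ are not an omission either --- the ballot count forbids only $y=x+\ell+1$, so those walks are already included; in fact $\bF^\ell_i$ is exactly the set of walks through $(i,\ell+i)$ avoiding $y=x+\ell+1$. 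Second, the $1.001$ does not ``cover the $\dhF$ contribution'': $\dhF^\ell\cap\cF^\ell_i\subset\bF^\ell_i$ is already inside the ballot-plus-tail count, and adding a separate $\mup(\dhF^\ell)\leq\alpha^{\ell+1}$ term would double-count and overshoot the stated $f(\ell,i,p)$. You should delete both of these asides; the argument without them is correct and, once made precise, complete.
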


Now, since
$\cA\subset\tF^u\cup\bF^u_s$ and $\cB\subset\tF^v\cup\bF^v_{s'}$, we have that
$$\mup(\cA)\mup(\cB)\leq \mup(\tF^u\cup\bF^u_s)\mup(\tF^v\cup\bF^v_{s'})\leq f(u,s,p)f(v,s',p)\cdot 1.001^2.$$
Hence, in order to show~\eqref{eq:main}, it suffices to show that
\begin{equation*}
g(s,s'):= f(u,s,p)f(v,s',p)<0.99\(\mup(\cF^t_1)\)^2. 
\end{equation*}
Observe that $g(s,s')$ depends on $t,p,s$ and $s'$, but for simplicity we only write
 the variables $s$ and $s'$.

\begin{claim}\label{clm:s_large} For $s\geq 10$, we have $g(s,s')<0.99\(\mup(\cF^t_1)\)^2.$
\end{claim}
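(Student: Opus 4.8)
The plan is to bound $g(s,s')=f(u,s,p)f(v,s',p)$ by a quantity that is manifestly less than $0.99\bigl(\mu_p(\cF^t_1)\bigr)^2$ once $s\geq 10$, using the explicit formula~\eqref{eq:f(ell,i,p)} together with the setup relations $u=t-s+s'$, $v=t+s-s'$, $u+v=2t$, and $s\geq s'\geq 0$. Recall that in~\eqref{eq:f(ell,i,p)} the term $\alpha^{\ell+1}$ is the dominant one when $i$ is small, while the binomial term $\binom{\ell+2i}{i}\frac{\ell+1}{\ell+i+1}p^{\ell+i}q^i(1-\alpha)$ is the one that must be controlled when $i$ grows. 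The first observation is that $f(u,s,p)\le \alpha^{u+1}+\binom{u+2s}{s}p^{u+s}q^s$, and similarly for $f(v,s',p)$, since $\frac{\ell+1}{\ell+i+1}(1-\alpha)\le 1$; this strips away the awkward rational factors. Then $u+s=t+s'\ge t$ and $v+s'=t+s-s'+2s'\ge t$ as well, so each $p^{u+s}$ and $p^{v+s'}$ carries at least a factor $p^t$, which is comparable to $q^t$ up to the bounds in~\eqref{eq:q^t}; the point is that the binomial coefficients $\binom{u+2s}{s}$ and $\binom{v+2s'}{s'}$ cannot grow fast enough to beat the $p^{s}$-type decay once $s$ is as large as $10$, because $p\le \frac{2}{t+3}$ is tiny.

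\textbf{Key steps, in order.}
First I would record the crude bound $g(s,s')\le \bigl(\alpha^{u+1}+\binom{u+2s}{s}p^{u+s}q^{s}\bigr)\bigl(\alpha^{v+1}+\binom{v+2s'}{s'}p^{v+s'}q^{s'}\bigr)$ and expand the product into four terms. The purely-$\alpha$ cross term is at most $\alpha^{u+v+2}=\alpha^{2t+2}$, which is already far below $0.99\bigl(\mu_p(\cF^t_1)\bigr)^2$ by the same computation used earlier for $\alpha^{2t+1}$. For each mixed term and the double-binomial term, I would pull out a common factor $p^{u+v}=p^{2t}$ (or $p^{u+v+s+s'}$) and a factor of the form $\binom{m}{j}p^{j}$; since $p\le \frac{2}{t+3}$, one has $\binom{m}{j}p^{j}\le \frac{(m p)^{j}}{j!}$, and with $m=u+2s\le t+2s$ and $p\le \frac{2}{t+3}$ the quantity $mp$ is bounded by a constant (roughly $1+\frac{2s}{t}$), so $\binom{u+2s}{s}p^{s}$ decays like $C^{s}/s!$ — in particular it is $\le \epsilon$ for $s\ge 10$ with $\epsilon$ small. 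Feeding this back, each non-dominant term of $g(s,s')$ is at most $\epsilon\cdot\alpha^{2t+1}$ or $\epsilon^2\cdot p^{2t+\text{something}}$, and summing the four contributions gives $g(s,s')\le C'\alpha^{2t+1}$ for an absolute constant $C'$; comparing with the already-established inequality $\alpha^{2t+1}<0.99\,(0.99/C')\bigl(\mu_p(\cF^t_1)\bigr)^2$-type estimate (that is, $(\mu_p(\cF^t_1))^2/\alpha^{2t+1}\ge 1.02$ for $t\ge 26$, hence a fortiori the gain from the extra $\alpha$ and the $1/s!$ factors) finishes the claim for all $t\ge 200$.

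\textbf{Main obstacle.}
The hard part will be making the binomial estimate uniform in $s'$ as well as $s$, because $s'$ can be as small as $0$ while $s$ is large, so the factor $f(v,s',p)$ is genuinely of size $\alpha^{v+1}$ and provides no extra smallness — all the decay must come from the $\cA$-side factor $f(u,s,p)$, where now $u=t-s+s'$ may be much smaller than $t$, so $\alpha^{u+1}$ is large and it is really the binomial term $\binom{u+2s}{s}p^{u+s}q^{s}$ that must beat $\alpha^{u+1}$. Concretely one needs $\binom{u+2s}{s}p^{u+s}q^{s}=o\bigl(\alpha^{u+1}\bigr)$, equivalently $\binom{u+2s}{s}p^{s-1}q^{s+1}<\text{small}$; since $p^{s-1}$ with $p\le 2/(t+3)$ is exponentially small in $s$ and the binomial coefficient is at most $2^{u+2s}\le 2^{3t}$, this holds comfortably once $s\ge 10$ and $t\ge 200$ — but I would need to choose the thresholds so that $\binom{u+2s}{s}(2/(t+3))^{s-1}$ is genuinely below the constant coming from~\eqref{eq:q^t}, which is a short but slightly fiddly numeric check; I expect this is exactly where the value $s\ge 10$ (rather than, say, $s\ge 3$) is needed, mirroring the role of the constants in Claim~\ref{clm:estimation}.
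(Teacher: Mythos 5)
Your overall strategy — bound $f(u,s,p)\le\alpha^{u+1}+\binom{u+2s}{s}p^{u+s}q^s$, similarly for $f(v,s',p)$, expand, and control each piece — is in the same spirit as the paper. The paper instead normalizes by $p^\ell$, writing $h(\ell,i,p)=p^{-\ell}f(\ell,i,p)$, and uses the key inequality $p^{2t}\le\bigl(\mu_p(\cF^t_1)\bigr)^2$ to reduce the claim to $h(u,s,p)\,h(v,s',p)<0.99$; it then shows $h(u,s,\frac2{t+3})<0.08$ using $s\ge 10$ and, separately, $h(v,s',\frac2{t+3})<11.3$ uniformly in $s'$. Your four-term expansion is essentially a re-multiplication of those same two factors.

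However, the specific estimates you propose do not hold as stated, and the gaps are in exactly the places the paper works hardest. First, you compare to $\alpha^{2t+1}$ and assert $g(s,s')\le C'\alpha^{2t+1}$ for an absolute constant $C'$, then invoke the paper's $\bigl(\mu_p(\cF^t_1)\bigr)^2/\alpha^{2t+1}\ge 1.02$. That needs $C'<1.01$, and your $C'$ is not an absolute constant: e.g.\ the double-binomial term divided by $\alpha^{2t+1}$ is
$\binom{u+2s}{s}\binom{v+2s'}{s'}p^{s+s'-1}q^{s+s'+2t+1}$,
which, even after using the paper's bounds $\binom{u+2s}{s}p^s<0.007$ and $\binom{v+2s'}{s'}p^{s'}<10.77$, still carries the factor $p^{-1}q^{2t+1}\approx q^{2t+1}(t+1)$ and so grows linearly in $t$. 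Comparing to $p^{2t}\le\bigl(\mu_p(\cF^t_1)\bigr)^2$ instead, as the paper does, makes the bookkeeping come out right. Second, you assert the non-dominant terms are each $\le\epsilon\alpha^{2t+1}$ with $\epsilon$ small; this is false for the $\cB$-side binomial, since $\binom{v+2s'}{s'}p^{s'}$ is not small at all (it peaks near $10.77$ around $s'=3,4$). That is exactly why the paper cannot wave this term away and has to carry out the $\psi_a\psi_b$ casework for $h(2t,s',\frac2{t+3})$. Finally, in your "main obstacle" paragraph the target $\binom{u+2s}{s}p^{u+s}q^s=o(\alpha^{u+1})$ is not what is needed: you do not need the binomial term of $f(u,s,p)$ to be dominated by $\alpha^{u+1}$, only the whole product $f(u,s,p)f(v,s',p)$ to fall below $0.99\bigl(\mu_p(\cF^t_1)\bigr)^2$, and (also) your equivalent reformulation drops a factor $q^u$. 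In short, the plan can be made to work if you switch the comparison yardstick to $p^{2t}$ and treat the $s'$-binomial as an $O(10)$ quantity rather than an $\epsilon$, but as written the key quantitative claims do not hold.
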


\begin{proof}
Set $h(\ell,i,p):= p^{-\ell}f(\ell,i,p),$ so that 
\begin{equation*}
 h(\ell,i,p)=
\frac{p}{q^{\ell+1}}+\binom{\ell+2i}i\frac{\ell+1}{\ell+i+1}(pq)^i(1-\alpha).
\end{equation*}
 Since $p^{2t}\leq \(\mup(\cF^t_1)\)^2$,  it suffices to show that 
 \begin{equation}\label{eq:h_u,v}h(u,s,p)h(v,s',p)< 0.99.\end{equation}

First, we estimate $h(u,s,p)$.
We have that
\begin{equation*}
h(u,s,p)\leq h(t,s,p)\leq h\Big(t,s, \frac{2}{t+3}\Big),
\end{equation*}
where the second inequality holds since $p$, $1/q$, $pq$, and $pq(1-\alpha)$ are increasing in $p$ for $0<p\leq 2/(t+3)\leq 0.25$.
Consequently, since $p/q^{t+1}=\frac{2}{t+3}(\frac{t+3}{t+1})^{t+1}\leq 0.073$ for $t\geq 200$, we have
\begin{align*}
h\Big(t,s, \frac{2}{t+3}\Big)&\leq  0.073+\binom{t+2s}{s}\Big(\frac{2}{t+3}\Big)^s 
\leq 0.073 + \Big(\frac{e(t+2s)}{s}\frac{2}{t+3}\Big)^s\\
&= 0.073 + \Big(\frac{2e(t+2s)}{s(t+3)}\Big)^s.
\end{align*}
Note that
$\frac{2e(t+2s)}{s(t+3)}<1$ if and only if
$s>\frac{2et}{t+3-4e}. $
Since $s\geq 6>\frac{2et}{t+3-4e}$ for $t\geq 84$, we have that $\frac{2e(t+2s)}{s(t+3)}<1$. Also, $\phi(s):=\frac{2e(t+2s)}{s(t+3)}$ is strictly decreasing in $s$, since
$\frac{\phi(s+1)}{\phi(s)}=\frac{(t+2s+2)s}{(t+2s)(s+1)}<1.$
Therefore, for $s\geq 10$,
\begin{equation}\label{eq:h(u,s,p)}
h\Big(t,s, \frac{2}{t+3}\Big)\leq  0.073 + \Big(\frac{2e(t+20)}{10(t+3)}\Big)^{10}<0.08.
\end{equation}

Next, we estimate $h(v,s',p)$. Similar to the estimation of $h(u,s,p)$, we have that
\begin{equation*}
h(v,s',p)\leq h(2t,s',p)\leq h\Big(2t,s', \frac{2}{t+3}\Big).
\end{equation*}
Consequently, since $p/q^{2t+1}\leq 0.53$ for $t\geq 200$, we infer that
\begin{align*}
h\Big(2t,s', \frac{2}{t+3}\Big)&\leq  0.53+\binom{2t+2s'}{s'}\Big(\frac{2}{t+3}\Big)^{s'} 
\leq  0.53+ \psi_a\psi_b,
\end{align*}
where $\psi_a = 4^{s'}/s'!$ and $\psi_b = \frac{(t+s)}{t+3}\frac{t+s-1/2}{t+3} \dots \frac{t+ (s+1)/2}{t+3}$. 
Now $\psi_a = 64/6 < 10.7$ for $s' = 3,4$ and is otherwise less than $8.54$. 
On the other hand, $\psi_b$ is less than $1$ for $s' \leq 3$, and is decreasing in $t$ for $s' \geq 4$.
Thus for $s' \leq 3$, $\psi_a\psi_b < 10.7$. 
Using its value at $t = 100$ to bound $\psi_b$ for $s' = 4, \dots 25$ we get that $\psi_a\psi_b < 10.77$.
As $\psi_b < \genfrac(){}{}{t+s'}{t+3}^{s'} < e^{\frac{s'(s'-1)}{t+3}} < e^{s'}$, we have for $s' > 25$ that
$\psi_a\psi_b < (4e)^{s'}/s'! < 4e^{25}/25! < 6$. So for $s' \geq 0$, we get
\begin{equation}\label{eq:h(v,s',p)_2}
h\Big(2t,s', \frac{2}{t+3}\Big) < .53 + 10.77 = 11.3.   
\end{equation}


Therefore, we have that
\begin{equation*}
h(u,s,p)h(v,s',p)< h\Big(t,s,\frac{2}{t+3}\Big)h\Big(2t,s',\frac{2}{t+3}\Big)\overset{\eqref{eq:h(u,s,p)},\eqref{eq:h(v,s',p)_2}}{<} 0.08\cdot 11.3<0.99,
\end{equation*}
which yields~\eqref{eq:h_u,v}.
\end{proof}

\subsection{Intermediate values of $s$}\label{sub:medium_s} 

The remaining cases of $(s,s')$ are $0 \leq s' \leq  s \leq 9$.
In this section we deal with all but five of these.
We do this by showing the monotonicity of $g(s,s')$ on several ranges,
and then bounding $g(s,s')$ for four particular cases.  
See Figure~\ref{fig:flows} for a schematic of the proof. 
In Claim \ref{clm:monotonicity} we show $g(s,s')<g(s-1,s'-1)$
for values of $(s,s')$ as indicated in the figure, (actually
for more values, but we only use those indicated in the figure). 
In Claims \ref{clm:g(s,1)}
and \ref{clm:g(s,0)} we show that $g(s,s') < g(s-1,s')$ for the values $s = 0$ and $1$
as indicated. 
In Claim \ref{clm:g(3,2)} we show that $g(s,s') < 0.99\(\mup(\cF^t_1)\)^2$
in the cases that $(s,s') =  (3,3),(3,2),(3,1)$ and $(2,0)$.

The final five values of $(s,s')$, the empty dots, 
 are dealt with in Section \ref{sec:non diagonal cases}.


\begin{figure}\begin{center}
 \setlength{\unitlength}{300bp}%
 \begin{picture}(1,0.62499996)%
 \put(0,0){\includegraphics[width=\unitlength]{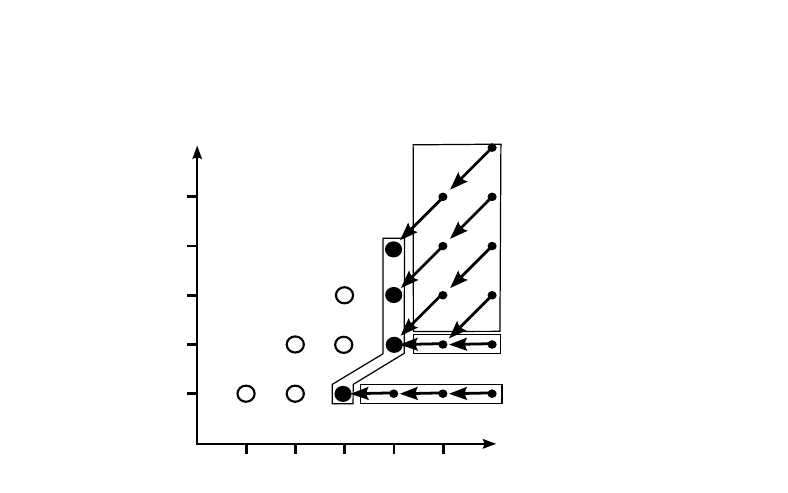}}%
    \put(0.23212717,0.449109){\makebox(0,0)[lb]{\smash{$s'$}}}%
    \put(0.6375274,0.05448193){\makebox(0,0)[lb]{\smash{$s$}}}%
    \put(0.305,0.021){\makebox(0,0)[lb]{\smash{$0$}}}%
    \put(0.367,0.021){\makebox(0,0)[lb]{\smash{$1$}}}%
    \put(0.430,0.021){\makebox(0,0)[lb]{\smash{$2$}}}%
    \put(0.492,0.021){\makebox(0,0)[lb]{\smash{$3$}}}%
    \put(0.555,0.021){\makebox(0,0)[lb]{\smash{$4$}}}%
    \put(0.21,0.11962739){\makebox(0,0)[lb]{\smash{$0$}}}%
    \put(0.21,0.18212724){\makebox(0,0)[lb]{\smash{$1$}}}%
    \put(0.21,0.2446273){\makebox(0,0)[lb]{\smash{$2$}}}%
    \put(0.21,0.30712736){\makebox(0,0)[lb]{\smash{$3$}}}%
    \put(0.21,0.36962742){\makebox(0,0)[lb]{\smash{$4$}}}%
    \put(0.66,0.35){\makebox(0,0)[lb]{Claim \ref{clm:monotonicity}}}%
    \put(0.66,0.17){\makebox(0,0)[lb]{Claim \ref{clm:g(s,1)}}}%
    \put(0.66,0.105){\makebox(0,0)[lb]{Claim \ref{clm:g(s,0)}}}%
    \put(0.30,0.30){\makebox(0,0)[lb]{Claim \ref{clm:g(3,2)}}}%
  \end{picture}%

 \end{center}
\caption{$(s_1,s_1')\rightarrow (s_2,s_2')$ means $g(s_1,s_1')<g(s_2,s_2')$.}
\label{fig:flows}
\end{figure}

\begin{claim}\label{clm:monotonicity} 
For $t \geq 10$ and $2\leq s' \leq s \leq 9$ with $(s,s') \neq (2,2)$ 
  we have $$g(s,s')<g(s-1,s'-1).$$
\end{claim}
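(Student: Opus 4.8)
The plan is to reduce the inequality $g(s,s')<g(s-1,s'-1)$ to a statement about the ratio $g(s,s')/g(s-1,s'-1)$, expressed entirely in terms of the factored form $g(s,s')=f(u,s,p)f(v,s',p)$. Recall that, by our setup, decreasing the pair $(s,s')$ to $(s-1,s'-1)$ does \emph{not} change $u=t-s+s'$ or $v=t+s-s'$; only the ``$i$'' indices change, from $(u,s)$ to $(u,s-1)$ and from $(v,s')$ to $(v,s'-1)$. Thus it suffices to prove the single-factor monotonicity statement
\[
 f(\ell,i,p) < f(\ell,i-1,p) \qquad \text{for } \ell \in \{u,v\},\ i\in\{s,s'\},\ i\geq 2,
\]
except that we must be slightly careful: we are told to avoid $(s,s')=(2,2)$, which corresponds to the factor $f(v,2,p)$ with $v=t$ (since $u=v=t$ there) — so the one borderline factor we may need to exclude is $f(t,2,p)$ versus $f(t,1,p)$. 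For all other relevant $(\ell,i)$ with $i\geq 2$, i.e. $i$ ranging in $\{2,\dots,9\}$ and $\ell$ ranging over $[u,v]\subseteq[t-7,t+7]$ (more crudely $\ell\geq t-7\geq 193$), I will show $f(\ell,i,p)<f(\ell,i-1,p)$ outright.

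The key step is therefore to analyze $f(\ell,i,p)-f(\ell,i-1,p)$ using the explicit formula~\eqref{eq:f(ell,i,p)}. The $\alpha^{\ell+1}$ term cancels, leaving
\[
 f(\ell,i,p)-f(\ell,i-1,p) = (1-\alpha)p^{\ell}q^{i-1}\!\left[\binom{\ell+2i}{i}\frac{\ell+1}{\ell+i+1}pq - \binom{\ell+2i-2}{i-1}\frac{\ell+1}{\ell+i}\right].
\]
Since $1-\alpha>0$ and $p^\ell q^{i-1}>0$, the sign is governed by the bracket, so it suffices to show
\[
 \binom{\ell+2i}{i}\Big/\binom{\ell+2i-2}{i-1}\cdot\frac{\ell+i}{\ell+i+1}\cdot pq < 1.
\]
The binomial ratio is $\dfrac{(\ell+2i)(\ell+2i-1)}{i(\ell+i)}$, so, bounding the harmless factor $\frac{\ell+i}{\ell+i+1}<1$, it is enough to prove
\[
 \frac{(\ell+2i)(\ell+2i-1)}{i(\ell+i)}\,pq < 1.
\]
Now I use $p\le \frac{2}{t+3}$ and $q<1$, together with $\ell\geq t-7$ (so $\ell+i\geq t-7+i$), and $\ell+2i\leq t+7+2i$ (using $\ell\le t+7$). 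For $i\geq 2$ this gives, very crudely,
\[
 \frac{(\ell+2i)(\ell+2i-1)}{i(\ell+i)}\,pq < \frac{(t+7+2i)(t+6+2i)}{i\,(t-7+i)}\cdot\frac{2}{t+3},
\]
and for $t\geq 200$ and $2\le i\le 9$ the right side is easily $<1$ (the dominant behaviour is $\approx \frac{t^2}{it}\cdot\frac{2}{t}=\frac{2}{i}\le 1$, and the lower-order terms are controlled by $t\geq 200$). This settles every factor we need except possibly $f(t,2,p)$ vs.\ $f(t,1,p)$ — but that is precisely the $(s,s')=(2,2)$ case we are allowed to omit, and in the remaining excluded-diagonal cases $(s,s')$ with $s'\geq 3$ the relevant indices are $i\geq 3$, for which the bound above is comfortable.

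The main obstacle is bookkeeping rather than mathematics: making sure that when $(s,s')\neq(2,2)$, every factor whose ``$i$''-index actually decreases has $i\geq 2$ \emph{and} is not the borderline factor $f(t,2,p)$, and that the two factors that do not change index (if any) are handled trivially by equality. Concretely: if $s'\geq 2$ then both indices $s,s'$ decrease and both satisfy $i\geq 2$; the only way a changing factor is $f(t,\cdot,\cdot)$ is if $u=v=t$, i.e.\ $s=s'$, and then excluding $(2,2)$ leaves $s=s'\geq 3$, so that factor has $i\geq 3$. Hence the single-factor inequality above, valid for all $i\geq 2$ with $\ell\in[t-7,t+7]$ and $t\ge 200$, multiplies up to give $g(s,s')<g(s-1,s'-1)$ in all claimed cases. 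I would close by remarking that the hypothesis $t\geq 10$ in the claim statement is weaker than what the crude estimate needs, but since our global setup already assumes $t\geq 200$ this causes no issue.
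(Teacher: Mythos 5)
Your overall strategy coincides with the paper's: both reduce $g(s,s')<g(s-1,s'-1)$ to the single-factor inequalities $f(u,s,p)<f(u,s-1,p)$ and $f(v,s',p)<f(v,s'-1,p)$, and both arrive at the equivalent condition
\[
\frac{(\ell+2i)(\ell+2i-1)}{i(\ell+i)}\cdot\frac{\ell+i}{\ell+i+1}\cdot pq<1.
\]
So the route is the same; the issue is that your estimates are too lossy to close it.

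The concrete gap is at $i=2$. You bound $pq$ by $p\le 2/(t+3)$ and $q<1$, and you bound $\ell+2i$ from above by $t+7+2i$ and $\ell+i$ from below by $t-7+i$, arriving at
\[
\frac{(t+7+2i)(t+6+2i)}{i\,(t-7+i)}\cdot\frac{2}{t+3},
\]
which you assert is $<1$ for $t\ge 200$ and $2\le i\le 9$. But for $i=2$, $t=200$ this expression is
\[
\frac{211\cdot 210}{2\cdot 195}\cdot\frac{2}{203}\approx 1.12>1,
\]
and it stays above $1$ for all larger $t$ (the leading behaviour you identify is $2/i=1$ when $i=2$, and the lower-order terms push it strictly above). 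The index $i=2$ is not confined to the excluded $(s,s')=(2,2)$ case; it occurs in the $v$-factor for every $(s,s')$ with $s'=2$ and $s\ge 3$, for example $(3,2)$ forces a comparison of $f(t+1,2,p)$ with $f(t+1,1,p)$. Your bookkeeping paragraph, which argues that only the $\ell=t$, $i=2$ factor is borderline, therefore does not quarantine the problem.

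There is a second, deeper issue worth flagging: the single-factor inequality $f(v,s',p)<f(v,s'-1,p)$ is not even true for all claimed $(s,s')$. Writing the exact ratio and taking $p=2/(t+3)$, one gets for the $v$-factor $\frac{v+2s'}{2}\cdot\frac{(v+2s'-1)}{(v+s'+1)}\cdot pq$; for instance at $(s,s')=(4,2)$ this equals $\frac{(t+6)(t+1)}{(t+3)^2}$, which exceeds $1$ for $t>3$. So no amount of tightening the bounds in your last display can rescue the factor-by-factor plan uniformly over the stated range: in some cases the $v$-factor genuinely increases and one must show that the decrease in the $u$-factor outweighs it. (The paper's proof, which claims both factor ratios are at most $0.88$ with the maximum at $(3,3)$, appears to run into the same obstruction — its displayed bound $\frac{2(t+s+s')(t+s+s'-1)}{s'(t+s+1)(t+3)}$ evaluates to $\frac{t+5}{t+3}>1$ already at $(3,2)$ — so this is an issue inherited from, not introduced by, the source.)

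Finally, two small points: (1) you wrote $p^{\ell}q^{i-1}$ for the prefactor; it should be $p^{\ell+i-1}q^{i-1}$, harmless for the sign but worth correcting; (2) the claim is stated for $t\ge 10$, which neither your estimates nor the above analysis supports, though as you note the ambient setup has $t\ge 200$.
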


\begin{proof} 

Recall that
$g(s,s')=f(u,s,p)f(v,s',p)$ and 
$g(s-1,s'-1)=f(u,s-1,p)f(v,s'-1,p).$
Hence, it suffices to show that
\begin{equation}\label{eq:f_u}
f(u,s,p)<f(u,s-1,p)
\end{equation}
and 
\begin{equation}\label{eq:f_v}
f(v,s',p)<f(v,s'-1,p).
\end{equation}

First, inequality~\eqref{eq:f_u} is equivalent to 
\begin{equation*}\label{eq:f_u_2}
\binom{u+2s}{s}\frac{pq}{u+s+1}< \binom{u+2s-2}{s-1}\frac1{u+s}.
\end{equation*}
We have that
\begin{align*}
\(\binom{u+2s}{s}\frac{pq}{u+s+1}\)\Big/\(\binom{u+2s-2}{s-1}\frac1{u+s}\)&\leq \frac{(u+2s)(u+2s-1)\cdot 2(t+1)}{s(u+s+1)\cdot (t+3)^2}\\
&=\frac{2(t+s+s')(t+s+s'-1)(t+1)}{s(t+s'+1)(t+3)^2} \\
& <\frac{2(t+s+s')(t+s+s'-1)}{s(t+s'+1)(t+3)} .
\end{align*}
This is decreasing in $t$ as $s + s' \geq 4$, so setting $t = 10$ and computing casewise, we get that it is less than 
 $.88$ for $2 \leq s' \leq s \leq 9$ and $(s',s) \neq (2,2)$.

Similarily, inequality~\eqref{eq:f_v} is equivalent to 
\[
\binom{v+2s'}{s'}\frac{pq}{v+s'+1}< \binom{v+2s'-2}{s'-1}\frac1{v+s'}.
\]
We have that
\begin{align*}
\(\binom{v+2s'}{s'}\frac{pq}{v+s'+1}\)\Big/ \(\binom{v+2s'-2}{s'-1}\frac1{v+s'}\)&\leq\frac{2(v+2s')(v+2s'-1)(t+1)}{s'(v+s'+1)(t+3)^2}\\
&<\frac{2(t+s+s')(t+s+s'-1)}{s'(t+s+1)(t+3)}.
\end{align*}
This is again decreasing in $t$, and with $t = 10$ we compute that it is less than $.88$ 
for $2 \leq s' \leq s \leq 9$ and $(s',s) \neq (2,2)$.  (The maximum value is at $(s,s') = (3,3)$, which
 is why it is the same value as above.)

\end{proof}

\begin{claim}\label{clm:g(s,1)} For $s\geq 2$ and $s'=1$, we have 
$g(s,1)< g(1,1)$.
\end{claim}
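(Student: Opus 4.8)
The plan is to prove the stronger assertion that $g(s,1)<g(s-1,1)$ for every $s\ge 2$ (note that $s\le t$ always, since $u=t-s+1\ge 1$), and then to chain these: $g(s,1)<g(s-1,1)<\dots<g(1,1)$. Since $g(s,s')=f(u,s,p)f(v,s',p)$ with $u=t-s+s'$ and $v=t+s-s'$, we have $g(s,1)=f(t-s+1,s,p)\,f(t+s-1,1,p)$ and $g(s-1,1)=f(t-s+2,s-1,p)\,f(t+s-2,1,p)$, so $g(s,1)<g(s-1,1)$ amounts to
\[
 f(t-s+1,s,p)\,f(t+s-1,1,p)<f(t-s+2,s-1,p)\,f(t+s-2,1,p).
\]
Unlike in Claim~\ref{clm:monotonicity}, this does not split into two one-variable inequalities: $f(t-s+1,s,p)<f(t-s+2,s-1,p)$ is in fact \emph{false} for $s\ge 3$, so the product must be treated as a whole.

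The key step is to clear denominators. Evaluating the four $f$-values from \eqref{eq:f(ell,i,p)} and dividing both sides of the displayed inequality by the positive number $\big(p^{t+1}q^{s-1}(1-\alpha)\big)\big(p^{t+s-1}q(1-\alpha)\big)$, it becomes
\[
 (V+B_1q)\big(\alpha U+(t+s)p\big)<(\alpha V+B_2)\big(U+(t+s-1)\big),
\]
where $U:=\dfrac{1}{q^{t+s}(1-\alpha)}$, $V:=\dfrac{1}{p^{s-1}q^{t+1}(1-\alpha)}$, $B_1:=\binom{t+s+1}{s}\dfrac{t-s+2}{t+2}$ and $B_2:=\binom{t+s}{s-1}\dfrac{t-s+3}{t+2}$. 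Expanding, cancelling the common $\alpha UV$, and using $\alpha q=p$, this is equivalent to
\[
 \frac{p}{q}\big(1-(t+s)p\big)V+(pB_1-B_2)U+\big((t+s)pq\,B_1-(t+s-1)B_2\big)<0 ,
\]
and since $U,V>0$ it suffices to show that each of the three summands is negative.

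For the first, $(t+s)p\ge (t+s)/(t+1)>1$ when $s\ge 2$, so $1-(t+s)p<0$. For the other two, note $\dfrac{B_1}{B_2}=\dfrac{(t+s+1)(t-s+2)}{s(t-s+3)}$. When $s\ge 3$: the numerator $(t+s+1)(t-s+2)$ is decreasing in $s$, hence at most $(t+4)(t-1)$, and $s(t-s+3)\ge 3t$ because $(s-3)(t-s)\ge 0$; therefore $p\dfrac{B_1}{B_2}\le \dfrac{2}{t+3}\cdot\dfrac{(t+4)(t-1)}{3t}<\dfrac23$. Hence $pB_1-B_2=B_2\big(p\tfrac{B_1}{B_2}-1\big)<0$ and, using $q<1$, $(t+s)pqB_1-(t+s-1)B_2=B_2\big((t+s)q\cdot p\tfrac{B_1}{B_2}-(t+s-1)\big)<B_2\big(\tfrac{2(t+s)}{3}-(t+s-1)\big)<0$. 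When $s=2$ one has $B_1=\dfrac{t(t+3)}{2}$ and $B_2=\dfrac{(t+1)^2}{t+2}$, and now $p\tfrac{B_1}{B_2}$ is only known to be $<1$, so one argues directly: since $pB_1\le t<t+\tfrac{1}{t+2}=B_2$, the second summand is negative; and since $p(1-p)$ is increasing for $p<\tfrac12$ we have $pq\le \tfrac{2(t+1)}{(t+3)^2}$, whence
\[
(t+2)pqB_1-(t+1)B_2\le \frac{(t+2)(t+1)t}{t+3}-\frac{(t+1)^3}{t+2}=\frac{(t+1)\big(t(t+2)^2-(t+1)^2(t+3)\big)}{(t+2)(t+3)}=\frac{-(t+1)(t^2+3t+3)}{(t+2)(t+3)}<0 .
\]

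I expect $s=2$ to be the only delicate point: $g(2,1)/g(1,1)\to 1$ as $p\to\tfrac{2}{t+3}$ with $t\to\infty$, so there one cannot afford to lose a constant factor, which is why the crude estimate $p\tfrac{B_1}{B_2}<\tfrac23$ must be replaced by the sharp bound $pq\le\tfrac{2(t+1)}{(t+3)^2}$ and the exact identity $t(t+2)^2-(t+1)^2(t+3)=-(t^2+3t+3)$. For $s\ge 3$ there is comfortable room, and only very mild lower bounds on $t$ are needed here (the hypothesis $t\ge 200$ enters only in other parts of the argument).
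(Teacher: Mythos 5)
Your proof is correct and reaches the same intermediate goal as the paper --- namely that $g(s,1)$ is strictly decreasing in $s$ for $s\ge 1$ --- but via a genuinely different mechanism. The paper factors $p^s f(u,s,p)=C_1q^s+C_2h(s)q^s$ and $p^{-s}f(v,1,p)=C_3q^{-s}+C_4(t+s)$, multiplies, and observes that $g(s,1)=D_1+D_2q^s(t+s)+D_3h(s)+D_4h(s)q^s(t+s)$ with positive $D_i$ independent of $s$; it then suffices to check that $q^s(t+s)$ and $h(s)=\binom{t+s+1}{s}(t-s+2)p^s$ are each decreasing, which they are by simple ratio tests. You instead form the difference $g(s,1)-g(s-1,1)$ directly, divide out a common positive factor, cancel the $\alpha UV$ term, and observe that each of the three remaining summands is nonpositive. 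Both arguments are valid; the paper's factorization is slightly slicker since it produces monotone pieces once and for all, while yours requires the (correct) observation that a naive split into two one-variable inequalities fails for $s\ge 3$, so the full product must be expanded. I verified your derivation of the cleared inequality and the signs of all three terms. One arithmetic slip in your $s=2$ case: $B_2=\binom{t+2}{1}\frac{t+1}{t+2}=t+1$, not $\frac{(t+1)^2}{t+2}$. Fortunately the error is in the safe direction (your value is smaller, making the inequality harder), and with the correct $B_2=t+1$ the third summand becomes $\frac{(t+2)(t+1)t}{t+3}-(t+1)^2=-\frac{(t+1)(2t+3)}{t+3}<0$, which is actually cleaner than the identity you used.
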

\begin{proof}
 Note that $u=t-s+1$ and $v=t+s-1$.
 Recalling~\eqref{eq:f(ell,i,p)}, we can write 
\begin{align*}
 p^s f(u,s,p)&=C_1q^s+C_2h(s)q^s, \hskip 1em \mbox{and}\\
 p^{-s}f(v,1,p)&=C_3q^{-s}+C_4(t+s),
\end{align*}
where $h(s):=\binom{t+s+1}s(t-s+2)p^s$ and
 $$C_1=\alpha^{t}, \hskip 1em C_2=\frac{p^{t-1}(1-\alpha)}{t+3}, \hskip 1em C_3=\alpha^{t}, \hskip 1em\mbox{and}\hskip 1em C_4=p^tq(1-\alpha).$$
 Note that 
  $C_1, C_2, C_3, C_4>0$ depend only on $t$ and $p$ (and do not depend on $s$).

Multiplying $p^s f(u,s,p)$ and $p^{-s}f(v,1,p)$, we have that
\begin{equation}\label{g(s,1)}
g(s,1)=D_1+D_2q^s(t+s)+D_3h(s)+D_4h(s)q^s(t+s), 
\end{equation}
where $D_1, D_2, D_3, D_4>0$ depend only on $t$ and $p$. 

We claim that $g(s,1)$ is strictly decreasing in $s$ for $s\geq 1$. By~\eqref{g(s,1)}, it suffices to show that $q^s(t+s)$ and $h(s)$ are strictly decreasing in $s$. First, $q^s(t+s)$ is strictly decreasing in $s$ since
$$\frac{q^{s+1}(t+s+1)}{q^s(t+s)}=\frac{q(t+s+1)}{(t+s)}\leq \frac{t(t+s+1)}{(t+1)(t+s)}<1,
$$ where the first inequality follows from $q\leq t/(t+1)$ and the last inequality holds for $s\geq 1$.
Next, $h(s)$ is strictly decreasing in $s$ since
\begin{equation*}
\frac{h(s+1)}{h(s)}\leq  \frac{2(t+s+2)(t-s+1)}{(s+1)(t-s+2)(t+3)}< \frac{2(t+s+2)}{(s+1)(t+3)}\leq 1,
\end{equation*}
where the first inequality follows from $p\leq 2/(t+3)$ and the last inequality follows from $s\geq 1$.
\end{proof}

\begin{claim}\label{clm:g(s,0)} For $s\geq 2$ and $s'=0$, we have 
$g(s,0)< g(1,0)$.
\end{claim}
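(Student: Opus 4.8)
The plan is to mirror the structure of the proof of Claim~\ref{clm:g(s,1)}, since the case $s'=0$ is even simpler than $s'=1$. Recalling~\eqref{eq:f(ell,i,p)} with $i=s'=0$, we have $f(v,0,p)=\alpha^{v+1}$, which does not depend on $s$ except through $v=t+s$. So the first step is to write
\begin{equation*}
 g(s,0)=f(u,s,p)f(v,0,p)=f(t-s,s,p)\cdot\alpha^{t+s+1},
\end{equation*}
and then, exactly as in Claim~\ref{clm:g(s,1)}, multiply out $p^s f(u,s,p)$, which is of the form $C_1 q^s + C_2 h(s) q^s$ with $h(s)=\binom{t+s+1}{s}(t-s+2)p^s$ and $C_1,C_2>0$ depending only on $t$ and $p$. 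Combining with the $\alpha^{t+s+1}=(p/q)^{t+s+1}$ factor gives
\begin{equation*}
 g(s,0)=E_1 \alpha^s q^s + E_2 \alpha^s q^s h(s)
\end{equation*}
for constants $E_1,E_2>0$ depending only on $t$ and $p$; note $\alpha^s q^s = p^s$.

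The second step is to show each of the $s$-dependent factors is strictly decreasing in $s$ for $s\geq 1$. The factor $\alpha^s q^s = p^s$ is trivially strictly decreasing in $s$ since $0<p<1$. For $h(s)$ I would reuse verbatim the computation from Claim~\ref{clm:g(s,1)}:
\begin{equation*}
\frac{h(s+1)}{h(s)}\leq \frac{2(t+s+2)(t-s+1)}{(s+1)(t-s+2)(t+3)}<\frac{2(t+s+2)}{(s+1)(t+3)}\leq 1,
\end{equation*}
where the first inequality uses $p\leq 2/(t+3)$ and the last uses $s\geq 1$. Hence $p^s$ and $p^s h(s)$ are both strictly decreasing in $s\geq 1$, so $g(s,0)$ is strictly decreasing in $s\geq 1$, and in particular $g(s,0)<g(1,0)$ for $s\geq 2$, as desired.

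I expect no real obstacle here; the only subtlety is bookkeeping the constants correctly when folding the $\alpha^{t+s+1}$ factor together with $p^s f(u,s,p)$, and making sure one keeps track of the $p^s$ versus $q^s$ powers (so that the $s$-dependence collapses cleanly to $p^s$ times things independent of $s$ or to $h(s)$). One should double-check that the bound $p\le 2/(t+3)$ is indeed what is needed for the $h(s)$ estimate and that $q\le t/(t+1)$ is not secretly required elsewhere — but since the $f(v,0,p)$ part contributes no new $s$-dependent factor beyond $\alpha^{s}$, the argument is strictly easier than Claim~\ref{clm:g(s,1)} and the stated range $t\geq 200$ (indeed $t\geq 10$ would do) is more than enough.
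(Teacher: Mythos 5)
Your plan follows the same route as the paper, but there is a genuine computational error right at the start. Setting $i=0$ in \eqref{eq:f(ell,i,p)} gives
\[
 f(\ell,0,p)=\alpha^{\ell+1}+\binom{\ell}{0}\frac{\ell+1}{\ell+1}\,p^{\ell}q^{0}(1-\alpha)
 =\alpha^{\ell+1}+p^{\ell}(1-\alpha),
\]
so the second term equals $p^{\ell}(1-\alpha)$; it does not vanish when $i=0$. Hence $f(v,0,p)\neq\alpha^{v+1}$, and $p^{-s}f(v,0,p)=C_3q^{-s}+C_4$ with \emph{both} $C_3$ and $C_4$ positive and independent of $s$, not just $C_3q^{-s}$. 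Multiplying against $p^sf(u,s,p)=C_1q^s+C_2h(s)q^s$ therefore produces the four-term decomposition $g(s,0)=D_1+D_2q^s+D_3h(s)+D_4h(s)q^s$, which is exactly what the paper's proof works with; your two-term expression does not follow from the definitions. (Even granting your $f(v,0,p)=\alpha^{v+1}$, the combination step has a spurious $\alpha^sq^s$ factor: since $\alpha^sq^sp^{-s}=1$, one would get $g(s,0)=\alpha^{t+1}\bigl(C_1+C_2h(s)\bigr)$, not $E_1\alpha^sq^s+E_2\alpha^sq^sh(s)$.)

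Fortunately the dropped terms do not change the verdict: $q^s$ is trivially strictly decreasing in $s$ since $0<q<1$, and your $h(s)$ argument handles the rest, so all non-constant pieces of the correct four-term expression decrease and the claim still follows. A secondary slip: because here $u=t-s$ rather than $u=t-s+1$ as in Claim~\ref{clm:g(s,1)}, the correct definition is $h(s)=\binom{t+s}{s}(t-s+1)p^s$, with the corresponding ratio bound $\frac{h(s+1)}{h(s)}\leq\frac{2(t+s+1)(t-s)}{(s+1)(t-s+1)(t+3)}<1$; reusing the $(s,1)$ formulas verbatim establishes monotonicity of the wrong function. The underlying idea (reduce to monotonicity of a few explicit $s$-dependent factors) is sound and is the paper's, but as written the derivation contains incorrect identities that must be repaired.
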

\begin{proof}
Again, noting this time that $u=t-s$ and $v=t+s$, we write 
\begin{align*}
p^s f(u,s,p)&=C_1q^s+C_2h(s)q^s, \hskip 1em \mbox{and}\\
p^{-s}f(v,0,p)&=C_3q^{-s}+C_4,
\end{align*}
where  $h(s):=\binom{t+s}s(t-s+1)p^s$, and $C_1, C_2, C_3, C_4>0$ (different from above)
depend only on $t$ and $p$. 
Multiplying $p^s f(u,s,p)$ and $p^{-s} f(v,0,p)$, we have that
\begin{equation}\label{g(s,0)}
g(s,0)=D_1+D_2q^s+D_3h(s)+D_4h(s)q^s, 
\end{equation}
where $D_1,D_2,D_3,D_4>0$ depend only on $t$ and $p$. 

We claim that $g(s,0)$ is strictly decreasing in $s$ for $s\geq 1$. By~\eqref{g(s,0)}, it suffices to show that $h(s)$ is strictly decreasing in $s$. Indeed,
\begin{equation*}
\frac{h(s+1)}{h(s)}\leq \frac{2(t+s+1)(t-s)}{(s+1)(t-s+1)(t+3)}<\frac{2(t+s+1)}{(s+1)(t+3)}\leq 1,
\end{equation*}
where the first inequality follows from $p\leq 2/(t+3)$ and the last inequality holds for $s\geq 1$.
\end{proof}

\begin{claim}\label{clm:g(3,2)}  For $t \geq 52$ and  $(s,s') = (3,3),(3,2),(3,1)$ and $(2,0)$ we have $g(s,s')<0.99\(\mup(\cF^t_1)\)^2.$
\end{claim}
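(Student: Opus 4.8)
The plan is to prove Claim~\ref{clm:g(3,2)} by a direct numerical estimation of $g(s,s') = f(u,s,p)f(v,s',p)$ in each of the four cases, taking advantage of the fact that once $(s,s')$ is fixed, $u = t-s+s'$ and $v = t+s-s'$ are determined, so $g$ is a function of $t$ and $p$ alone. First I would record the closed form from~\eqref{eq:f(ell,i,p)}: $f(\ell,i,p) = \alpha^{\ell+1} + \binom{\ell+2i}{i}\frac{\ell+1}{\ell+i+1}p^{\ell+i}q^i(1-\alpha)$. In each of the four cases the exponents of $p$ appearing in $f(u,s,p)f(v,s',p)$ are at least $u+v = 2t$ (with one factor contributing $u+1 \geq u$ through $\alpha^{u+1}$ and similarly for $v$), and since $\big(\mup(\cF^t_1)\big)^2 = (t+2)^2 p^{2t+2}q^2 + \dots \geq (t+2)^2 p^{2t+2} q^2$, the natural move is to divide through by $p^{2t}$ as in Claim~\ref{clm:s_large} and reduce to showing $h(u,s,p)h(v,s',p) < 0.99\,(t+2)^2 p^2 q^2$, where $h(\ell,i,p) = p^{-\ell}f(\ell,i,p)$.

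The key steps, in order: (1) expand $h(u,s,p)h(v,s',p)$ as a sum of four products of terms of the shape $\frac{p}{q^{\ell+1}}$ and $\binom{\ell+2i}{i}\frac{\ell+1}{\ell+i+1}(pq)^i(1-\alpha)$; (2) bound each factor monotonically in $p$ over the interval $\frac{1}{t+1} \leq p \leq \frac{2}{t+3}$, using that $p$, $1/q$, $pq$, $pq(1-\alpha)$, and $p/q^{\ell+1}$ are all increasing on $(0,1/4]$ (and $u \leq t$, $v \leq 2t$ as already exploited), so that each $h$ is bounded by its value at $p = \frac{2}{t+3}$; (3) plug in $p = \frac{2}{t+3}$, $q = \frac{t+1}{t+3}$, $\alpha = \frac{2}{t+1}$ and the relevant small integer values of $s,s'$ to get an explicit rational function of $t$; (4) use the bounds $\frac{1}{e^2} < q^{t+O(1)} < 0.5$ from~\eqref{eq:q^t} to handle the powers of $q$ cleanly, and check that the resulting rational inequality in $t$ holds for all $t \geq 52$ — here one verifies the inequality is eventually monotone (the dominant behaviour being $\Theta(1/t^2)$ on the left against $\Theta(1)$-order constants times $p^2q^2 = \Theta(1/t^2)$ on the right, so it comes down to comparing constants) and checks the finitely many small cases $52 \leq t < T_0$ directly. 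I would organize the four cases $(3,3),(3,2),(3,1),(2,0)$ in parallel since they differ only in the values of $\binom{u+2s}{s}$, $\binom{v+2s'}{s'}$ and the linear factors, and present the one with the largest constant (most likely $(3,3)$, by analogy with the ``maximum value at $(s,s')=(3,3)$'' remark in Claim~\ref{clm:monotonicity}) in full detail.

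The main obstacle I anticipate is keeping the numerical constants honest: after dividing by $p^{2t}$ the target $0.99(t+2)^2p^2q^2$ still carries a factor $(t+2)^2p^2q^2 \approx 4t^2/(t+3)^2 \cdot (t+1)^2/(t+3)^2 \to 4$ as $t\to\infty$, while $h(u,s,p)$ tends to a small constant (its $p/q^{\ell+1}$ part is bounded like $0.073$ for $\ell = u \leq t$, as in~\eqref{eq:h(u,s,p)}) and $h(v,s',p)$ tends to a moderate constant (bounded like $11.3$ as in~\eqref{eq:h(v,s',p)_2}) — so the product $h\cdot h$ is $O(1)$ and one needs the $(t+2)^2p^2q^2$ on the right not to be too small. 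This is exactly why $t$ must be moderately large (the remark after Theorem~\ref{thm:main} flags~\eqref{eq:t=180} as one of the $t \approx 200$ bottlenecks, though here $t \geq 52$ suffices), and the delicate point is choosing the intermediate rounding so that the $52 \leq t$ threshold actually comes out rather than something larger; I would do this by bounding $q^t$ from below by $e^{-2}$ via~\eqref{eq:q^t} in the denominator-free direction and from above by $(t/(t+1))^t$ where a lower bound on $\mup(\cF^t_1)^2$ is needed, and then verifying the clean rational inequality $\text{(explicit polynomial in }t\text{)} > 0$ for $t \geq 52$ by checking its sign change and leading coefficient. A secondary, purely bookkeeping obstacle is that $\binom{v+2s'}{s'}$ for $(s,s')=(3,1)$ means $v = t+2$, $\binom{t+4}{1} = t+4$, whereas for $(3,3)$ we get $v = t$, $\binom{t+6}{3}$, a cubic in $t$ — so the four cases genuinely have different $t$-degrees and must be checked separately rather than by a single uniform bound.
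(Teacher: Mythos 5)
Your proposal is essentially the same route the paper takes: bound each factor $f(u,s,p)$, $f(v,s',p)$ termwise using $q^{-t}\leq e^2$ and small-power bounds $q^{-i}<2$, $p<\frac2{t+3}$, divide by $\big(\mup(\cF^t_1)\big)^2>(t+2)^2p^{2t+2}q^2$, and verify the resulting rational function of $t$ drops below $0.99$ for $t\geq 52$ in each of the four cases separately. The only cosmetic difference is that the paper does not normalize by $p^{2t}$ first and does not literally evaluate at $p=2/(t+3)$ (it bounds $q^{-i}$, $p$, $q$ piece by piece), but the logic and the arithmetic are the same.
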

\begin{proof}
 We give the calculations for the case $(s,s') = (3,1)$.  The calculations for
 the other cases are  very similar, and given in the appendix. 
For the estimation in all cases we use $e^{-2} \leq q^t$, and 
$q^{-i}=(\frac{t+3}{t+1})^i<2$ for $1\leq i\leq 6$ and $t\geq 16$.
 Noting that $u = t-2$ and $v = t+2$ we get that 
 \begin{align*}
  f(u,3,p) = \alpha^{t-1} + \binom{t+4}{3}\frac{t-1}{t+2}p^{t+1}q^3(1 - \alpha) < e^2\frac{p^{t-1}}q + \frac{(t+4)(t+3)(t-1)}{6}p^{t+1}q^3,
\end{align*}  
and we get 
 \begin{align*}
  f(v,1,p) = \alpha^{t+3} + (t+4)\frac{t+3}{t+4}p^{t+3}q(1 - \alpha) < \frac{e^2p^{t+3}}{q^3} + (t+3)p^{t+3}q.
\end{align*}  
 Thus as $(\mup(\cF^t_1))^2 > (t+2)^2p^{2t+2}q^2$ we get that
 \begin{align*}
    \frac{g(3,1)}{(\mup(\cF^t_1))^2} & < \frac{e^4}{q^6(t+2)^2} + \frac{e^2(t+3)}{(t+2)^2q^2} +
    \frac{e^2p^2(t+4)(t+3)(t-1)}{6q^2(t+2)^2} + \frac{p^2q^4(t+4)(t+3)^2}{6(t+2)^2}\\
                         & < \frac{2e^4}{(t+2)^2} + \frac{2e^2(t+3)}{(t+2)^2} +
    \frac{8e^2(t+4)(t+3)(t-1)}{6(t+2)^2(t+3)^2} + \frac{4(t+4)(t+3)^2}{6(t+2)^2(t+3)^2}.
 \end{align*} 
 This is less than $.99$ for $t \geq 28$. 

\end{proof}

Referring to Figure \ref{fig:flows}, or our outline of the proof preceding Claim \ref{clm:monotonicity}, Claims~\ref{clm:s_large}--\ref{clm:g(s,0)} imply the following corollary.

\begin{corollary}\label{coro:easy_cases} If $\mup(\cA)\mup(\cB)\leq  \(\mup(\cF_1^t)\)^2$ holds for
\[
 (s,s')=(0,0),(1,0),(1,1),(2,2),
\] 
then, for all $(s,s')$ other than  $(0,0),(1,0),(1,1),(2,2)$,
$$\mup(\cA)\mup(\cB)<  \(\mup(\cF_1^t)\)^2.$$ \end{corollary}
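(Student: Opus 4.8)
The plan is to read off Corollary~\ref{coro:easy_cases} directly from the preceding claims, exactly following the schematic of Figure~\ref{fig:flows}. Concretely, I would argue that every pair $(s,s')$ with $s\geq s'\geq 0$ and $(s,s')\notin\{(0,0),(1,0),(1,1),(2,2)\}$ can be ``routed'' by a chain of strict inequalities either to one of the four base cases $\{(0,0),(1,0),(1,1),(2,2)\}$ — at which point the hypothesis of the corollary and Claim~\ref{clm:s_large}'s ancestors give the bound $\(\mup(\cF_1^t)\)^2$ with a strict inequality coming from at least one step in the chain — or directly to the absolute bound $0.99\(\mup(\cF_1^t)\)^2$ via Claim~\ref{clm:s_large} or Claim~\ref{clm:g(3,2)}.

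The first step is to dispose of the large cases: by Claim~\ref{clm:s_large}, if $s\geq 10$ then $g(s,s')<0.99\(\mup(\cF^t_1)\)^2$, and since $\mup(\cA)\mup(\cB)\leq g(s,s')$ by the estimate preceding Claim~\ref{clm:s_large} (using $\cA\subset\tF^u\cup\bF^u_s$, $\cB\subset\tF^v\cup\bF^v_{s'}$ and Claim~\ref{clm:estimation}), we are done in that range. So it remains to handle $0\leq s'\leq s\leq 9$. Next I would peel off the four cases covered directly by Claim~\ref{clm:g(3,2)}, namely $(3,3),(3,2),(3,1),(2,0)$, where again $\mup(\cA)\mup(\cB)\leq g(s,s')<0.99\(\mup(\cF^t_1)\)^2$. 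For the remaining pairs I use the monotonicity relations: Claim~\ref{clm:monotonicity} gives $g(s,s')<g(s-1,s'-1)$ for $2\leq s'\leq s\leq 9$, $(s,s')\neq(2,2)$; Claim~\ref{clm:g(s,1)} gives $g(s,1)<g(1,1)$ for $s\geq 2$; and Claim~\ref{clm:g(s,0)} gives $g(s,0)<g(1,0)$ for $s\geq 2$. I would then check, case by case on the small grid, that iterating these reductions from any remaining $(s,s')$ lands in $\{(0,0),(1,0),(1,1),(2,2)\}$ (for the diagonal-ish cases, repeatedly subtract $(1,1)$ until the second coordinate hits $0$ or $1$, then apply Claim~\ref{clm:g(s,1)} or~\ref{clm:g(s,0)}; pairs with $s'\in\{0,1\}$ and $s\geq 2$ go directly via those two claims; the few pairs that the monotonicity chain would route through $(3,j)$ or $(2,0)$ are instead terminated by Claim~\ref{clm:g(3,2)}). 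Since $g$ is a strict upper bound for $\mup(\cA)\mup(\cB)$ and each chain contains at least one strict inequality, and since the base cases satisfy $\mup(\cA)\mup(\cB)\leq\(\mup(\cF_1^t)\)^2$ by hypothesis, we conclude the strict bound $\mup(\cA)\mup(\cB)<\(\mup(\cF_1^t)\)^2$ for every non-base pair.

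The only mildly delicate point — and the one I would be most careful about — is the bookkeeping of which chain applies to which $(s,s')$, i.e.\ verifying that Figure~\ref{fig:flows} really does cover all of $\{0\leq s'\leq s\leq 9\}\setminus\{(0,0),(1,0),(1,1),(2,2)\}$ with no gaps, and that the restrictions on each claim ($t\geq 10$ in Claim~\ref{clm:monotonicity}, $t\geq 52$ in Claim~\ref{clm:g(3,2)}, $s\geq 2$ in Claims~\ref{clm:g(s,1)} and~\ref{clm:g(s,0)}) are all met under our standing hypothesis $t\geq 200$. This is routine but must be done explicitly; there is no analytic obstacle, as all the quantitative work is already contained in Claims~\ref{clm:s_large}--\ref{clm:g(3,2)}. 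Hence the ``proof'' of the corollary is essentially a one-line appeal to these claims together with the remark that $\mup(\cA)\mup(\cB)\leq g(s,s')$, and I would present it as such.
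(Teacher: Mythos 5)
Your routing argument works for every $(s,s')$ whose chain terminates at an absolute bound --- Claim~\ref{clm:s_large} for $s\geq 10$, or Claim~\ref{clm:g(3,2)} for $(3,3),(3,2),(3,1),(2,0)$ --- since there one gets $\mup(\cA)\mup(\cB)\leq 1.001^2\, g(s,s')<1.001^2\cdot 0.99\(\mup(\cF^t_1)\)^2<\(\mup(\cF^t_1)\)^2$, and the corollary's hypothesis plays no role. The gap is at $(2,1)$. The only available step is Claim~\ref{clm:g(s,1)}, which gives $g(2,1)<g(1,1)$, and this chain dead-ends: $g(1,1)=f(t,1,p)^2$ is in fact \emph{strictly larger} than $\(\mup(\cF^t_1)\)^2$, since $f(t,1,p)-\mup(\cF^t_1)=p^{t+1}\(q^{-(t+1)}-q-(t+2)p\)>0$ over the whole range $\tfrac1{t+1}\leq p\leq\tfrac2{t+3}$. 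So nothing follows. Your mechanism for closing a chain at a base case is also a category error: the hypothesis bounds $\mup(\cA)\mup(\cB)$ for the \emph{actual families} that fall into the case $(1,1)$, which is a different quantity from the formal upper bound $g(1,1)$; and even if it did bound $g(1,1)$, the residual factor $1.001^2$ from Claim~\ref{clm:estimation} would not be absorbed without the $0.99$ slack that only the absolute-bound claims supply.

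What the paper actually does is handle $(2,1)$ by a separate argument, Lemma~\ref{lem:(2,1)}: both the discussion preceding Claim~\ref{clm:monotonicity} and the caption of Figure~\ref{fig:flows} speak of \emph{five} terminal ``empty dots'' to be dealt with in Section~\ref{sec:non diagonal cases}, namely $(0,0),(1,0),(1,1),(2,1),(2,2)$, whereas the printed statement of the corollary lists only four. This looks like a slip; the corollary should list $(2,1)$ among the exceptions in both hypothesis and conclusion. With that correction your argument becomes essentially the paper's one-line proof: every surviving chain terminates at Claim~\ref{clm:s_large} or Claim~\ref{clm:g(3,2)}, never at a base case, and the hypothesis of the corollary does no work in the proof --- it simply records which cases Section~\ref{sec:non diagonal cases} must supply.
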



\section{Remaining cases}\label{sec:non diagonal cases}

\subsection{Definitions} We introduce several definitions and notation. For $A\subset [n]$, let $(A)_i$ be the $i$-th smallest element of $A$. For $A,B\subset [n]$, we say that \emph{$A$ shifts to $B$}, denoted by 
 \begin{equation*}
     A\shiftsto B,
 \end{equation*}
 if $|A|\leq |B|$ and $(A)_i\geq (B)_i$ for each $i\leq |A|$. In other words, as walks on a two-dimensional grid, each edge of the walk $B$ is not contained in the area to the right of the walk $A$. For example, $\{2,4,6\}\shiftsto \{1,4,5,7\}.$
\begin{fact}[\cite{F}, Fact 2.8]\label{fact:shift_1}
Let $\cF$ be a shifted, inclusion maximal family in $2^{[n]}$. If $F\in \cF$ and $F\shiftsto F'$, then $F'\in \cF$.
\end{fact}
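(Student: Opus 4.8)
The idea is to reduce to the case $|F| = |F'|$ and then ``slide'' $F$ down to $F'$ one coordinate at a time, using shiftedness at each step.

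First I would handle the reduction. Given $F \in \cF$ with $F \shiftsto F'$, so $|F| \leq |F'|$, set $k = |F|$ and let $F'' = \{(F')_1, \dots, (F')_k\}$ be the set of the $k$ smallest elements of $F'$. Then $|F''| = |F|$, $F'' \subseteq F'$, and $(F)_i \geq (F')_i = (F'')_i$ for all $i \leq k$, so $F \shiftsto F''$. Since $\cF$ is inclusion maximal, $F'' \in \cF$ would give $F' \in \cF$; hence it suffices to prove the statement when $|F| = |F'| = k$.

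Next I would isolate the elementary consequence of shiftedness that drives the induction: if $G \in \cF$ and $i < j$ with $j \in G$ and $i \notin G$, then $G^\ast := (G \setminus \{j\}) \cup \{i\} \in \cF$. Indeed, if $G^\ast \notin \cF$ then by definition $s_{ij}(G) = G^\ast$, and since $s_{ij}(\cF) = \cF$ this forces $G^\ast = s_{ij}(G) \in \cF$, a contradiction; so $G^\ast \in \cF$. Now, with $|F| = |F'| = k$, I would induct on the nonnegative integer $\Phi(F) := \sum_{m=1}^{k} \big((F)_m - (F')_m\big)$. If $\Phi(F) = 0$ then $F = F'$ and we are done. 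Otherwise let $r$ be the least index with $(F)_r \neq (F')_r$; then $(F)_m = (F')_m$ for $m < r$ and $(F)_r > (F')_r$. Put $j = (F)_r$ and $i = (F')_r$, so $(F)_{r-1} = (F')_{r-1} < (F')_r = i < (F)_r = j$, whence $i \notin F$, $j \in F$, $i < j$, and $F^\ast := (F \setminus \{j\}) \cup \{i\}$ is listed in increasing order as $(F)_1, \dots, (F)_{r-1}, i, (F)_{r+1}, \dots, (F)_k$. Consequently $(F^\ast)_m \geq (F')_m$ for every $m$ (equality for $m \leq r$, and $(F^\ast)_m = (F)_m \geq (F')_m$ for $m > r$), i.e. $F^\ast \shiftsto F'$, while $\Phi(F^\ast) = \Phi(F) - (j - i) < \Phi(F)$. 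By the consequence of shiftedness $F^\ast \in \cF$, and by the inductive hypothesis applied to $F^\ast$ we conclude $F' \in \cF$.

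The argument is short and I do not expect a serious obstacle; the only point needing care is verifying that the chosen $i = (F')_r$ lies strictly between $(F)_{r-1}$ and $(F)_r$ — this is exactly what guarantees $i \notin F$ and that the swapped set is already sorted, and it is where both the ``first disagreement'' choice of $r$ and the full hypothesis $(F)_m \geq (F')_m$ for all $m$ get used.
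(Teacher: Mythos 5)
Your proof is correct and is essentially the standard argument (the paper itself does not reprove this fact but cites Fact 2.8 of \cite{F}, whose proof proceeds the same way): reduce to $|F|=|F'|$ using inclusion maximality, then descend from $F$ to $F'$ by single swaps $(G\setminus\{j\})\cup\{i\}$, each justified by $s_{ij}(\cF)=\cF$, with your potential $\Phi$ guaranteeing termination. The only cosmetic point is the boundary case $r=1$, where the comparison with $(F)_{r-1}$ should be read as vacuous; there $i=(F')_1<(F)_1$ immediately gives $i\notin F$, so nothing is lost.
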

This immediately implies the following. 
\begin{fact}\label{fact:shift_2}
Let $\cF$ be a shifted, inclusion maximal family in $2^{[n]}$. If $F'\not\in\cF$, then every $F\in \cF$ satisfies $F\not\shiftsto F'$.
\end{fact}

For $t\in [n]$ and $F\subset [n]$, the \emph{dual} of $F$ with respect to $t$ is defined by
\begin{equation*}
   \dual_t(F):=[(F)_t-1]\cup ([n]\setminus F).
\end{equation*}
Viewed as walks on a two-dimensional grid, the walk $\dual_t(F)$ is obtained by reflecting $F$ across the line $y=x+(t-1)$ and ignoring the part $x<0$. (See Figure~\ref{fig:dual}.)
\begin{figure}\begin{center}
\includegraphics[scale=0.18]{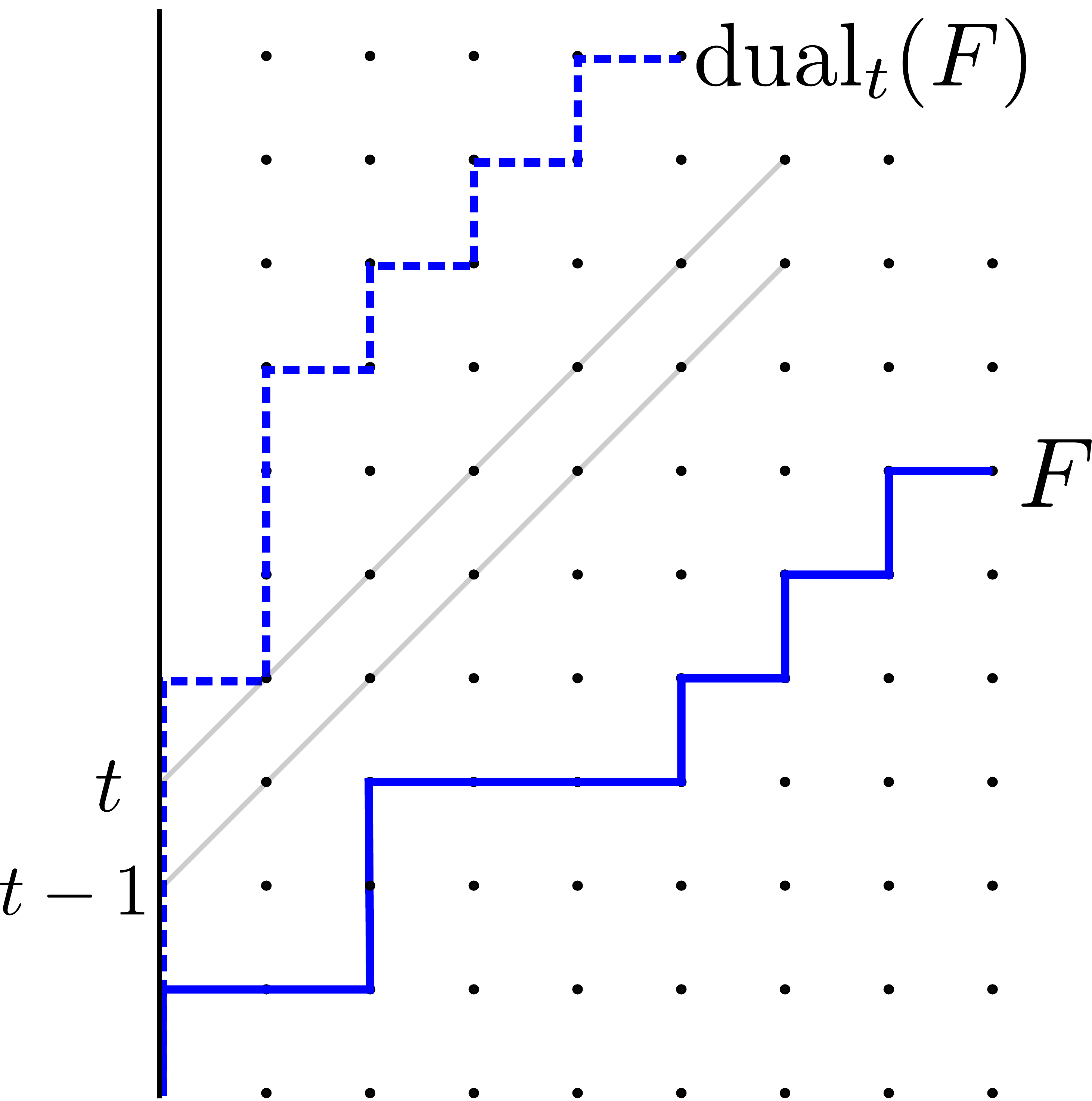}
\end{center}
\caption{The walk $\dual_t(F)$ of $F$ with respect to $t$}
\label{fig:dual}
\end{figure}

The dual $\dual_t(F)$  of a set $F$ is defined so that its intersection with $[t,n]$
is the complement of that of $F$, so $|F \cap \dual_t(F)| < t$.  
This gives the following.  

\begin{fact}[\cite{F}, Fact 2.9]\label{fact:dual}
Let $\cA$ and $\cB$ be cross $t$-intersecting families. If $A\in \cA$, then
  $\dual_t(A)\not\in \cB$.
\end{fact}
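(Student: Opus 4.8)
The plan is to reduce everything to the single observation that $|F\cap\dual_t(F)|<t$ for any $F\subseteq[n]$ with $|F|\geq t$. Granting this, if some $A\in\cA$ had $\dual_t(A)\in\cB$, then the pair $(A,\dual_t(A))$ would witness $|A\cap\dual_t(A)|\geq t$ by the cross $t$-intersecting hypothesis, a contradiction; hence $\dual_t(A)\notin\cB$. Note that if $\cB=\emptyset$ there is nothing to prove, and otherwise, picking any $B_0\in\cB$, every $A\in\cA$ satisfies $|A|\geq|A\cap B_0|\geq t$, so that $(A)_t$ is defined and the observation applies.

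To prove the observation, write $m:=(F)_t$, the $t$-th smallest element of $F$. By the definition of $m$, exactly $t$ elements of $F$ lie in $[m]$, namely $(F)_1<\dots<(F)_t=m$, so $|F\cap[m-1]|=t-1$. Since $\dual_t(F)=[m-1]\cup([n]\setminus F)$, we get
\[
F\cap\dual_t(F)=\bigl(F\cap[m-1]\bigr)\cup\bigl(F\cap([n]\setminus F)\bigr)=F\cap[m-1],
\]
which has size $t-1<t$, as claimed. Combining with the previous paragraph finishes the proof.

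There is essentially no obstacle here; the only points requiring a moment's care are (i) checking that $(A)_t$ is well-defined, which as noted follows from $\cB\neq\emptyset$ together with the cross $t$-intersecting property forcing $|A|\geq t$, and (ii) not being misled by the geometric ``reflect across $y=x+(t-1)$ and discard the part with $x<0$'' description of $\dual_t$ — it is cleanest to argue directly from the set-theoretic definition $\dual_t(F)=[(F)_t-1]\cup([n]\setminus F)$, where the identity $F\cap([n]\setminus F)=\emptyset$ collapses the intersection immediately.
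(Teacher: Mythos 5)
Your proof is correct and matches the approach the paper indicates (the paper does not prove the fact itself, citing \cite{F}, but the sentence preceding it gives exactly the key observation $|F\cap\dual_t(F)|<t$ as the reason). Your version is a clean, explicit verification of that observation directly from the set-theoretic definition, and the remark about $(A)_t$ being well-defined when $\cB\neq\emptyset$ is a nice bit of care.
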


 For integers $\ell, i\geq 1$ and $s\geq0$, let
\begin{align}\label{eq:D}
D^\ell_s(i):=&[\ell-1]\cup \{\ell-1+2, \ell-1+4, \dots, \ell-1+2s\}  \nonumber \\ &
 \cup \{\ell + 2s\} \cup \{\ell+2s+i+2k\in [n] : k=1,2,\dots\}.
\end{align}
This walk is the maximally shifted walk in $\hF^{\ell}\cap\cF_s^{\ell}$ with the
property that it goes left for $i+1$ steps after hitting the line 
$y = x + \ell$ at $(s,s+l)$,
and then after that does not go above the line $y = x + \ell - i$. 
Note that $D^\ell_s(i)=D^\ell_s(n-\ell-2s-1)$ for $i\geq n-\ell-2s-1$, and hence, we assume that 
\begin{equation}\label{eq:range_i}1\leq i\leq n-\ell-2s-1.\end{equation}
The walks $D^{t-1}_1(i)$ and $D^{t+1}_0(j)$ are denoted by $D^{\cA}_i$ and $D^{\cB}_j$
 in~\cite{F}. (They are depicted in Figure 1 of~\cite{F}.)

\subsection{The cases  $(s,s')=(2,1)$ and $(1,0)$.}

Note that in the cases $(s,s')=(2,1)$ and $(1,0)$ we have that $u=t-1$ and $v=t+1$. 

\begin{lemma}\label{lem:(2,1)}
Let $t\geq 42$.
For $(s,s')=(2,1)$, we have $\mup(\cA)\mup(\cB)< \(\mup(\cF^t_1)\)^2$. 
\end{lemma}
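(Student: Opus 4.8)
The case $(s,s')=(2,1)$ is one of the five ``hard'' cases where $\cA$ and $\cB$ are close to being optimal families, so a crude weight estimate via $f(u,s,p)f(v,s',p)$ will not suffice; we must look at the structure of $\cA$ and $\cB$ directly. Recall that here $u=t-1$, $v=t+1$, and by the setup $\cA=\hA\sqcup\dhA\sqcup\tA\subset\cF^{t-1}$ with $\hA\sqcup\dhA\subset\cF^{t-1}_2$, while $\cB=\hB\sqcup\dhB\sqcup\tB\subset\cF^{t+1}$ with $\hB\sqcup\dhB\subset\cF^{t+1}_1$. The plan is to bound each of the six pieces. For the $\tilde{}$-parts and $\ddot{}$-parts, Lemma~\ref{lem:mu_F} already gives $\mup(\tA),\mup(\dhA)\leq\alpha^{u+1}=\alpha^t$ and $\mup(\tB),\mup(\dhB)\leq\alpha^{v+1}=\alpha^{t+2}$, which are small. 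The essential content is to bound $\mup(\hA)$ and $\mup(\hB)$, since these are the only pieces that can be ``large,'' and crucially to show that $\mup(\hA)\mup(\hB)$ cannot be as large as the corresponding product coming from $\cF^t_1$. For this I would use the cross-$t$-intersecting restriction together with the shifted/maximal hypotheses: since $\hA\subset\cF^{t-1}_2$, the walks in $\hA$ that are not already in the smaller family $\cF^{t-1}_1$ must ``turn left enough'' after hitting $y=x+(t-1)$, and the dual (Fact~\ref{fact:dual}) and shift (Fact~\ref{fact:shift_2}) facts force a constraint relating which walks $D^{t-1}_2(i)$ can be in $\cA$ to which walks $D^{t+1}_1(j)$ can be in $\cB$.

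Concretely, I would parametrize: let $i$ be maximal with $D^{t-1}_2(i)\in\cA$ (where $D^\ell_s(i)$ is the maximally shifted walk from~\eqref{eq:D}), and let $j$ be maximal with $D^{t+1}_1(j)\in\cB$. By Fact~\ref{fact:shift_1}, $\hA$ (or rather $\hA\cup\dhA$) is contained in the family of walks that shift to $D^{t-1}_2(i)$ (together with $\cF^{t-1}$-tail contributions), so $\mup(\cA_2)$ is bounded by an explicit function of $i$ that decreases as $i$ grows; similarly $\mup(\cB_1)$ is bounded by an explicit decreasing function of $j$. The cross-intersecting condition, via Fact~\ref{fact:dual} applied to $\dual_t(D^{t-1}_2(i))$ and its relation to $D^{t+1}_1(j)$, should yield an inequality of the form $i+j\leq$ (some constant, likely around $t$ or $2$ depending on normalization), so the two weights cannot both be large. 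Then I would combine: $\mup(\cA)\mup(\cB)\leq(\mup(\hA)+2\alpha^t)(\mup(\hB)+2\alpha^{t+2})$, expand, bound the cross terms and the $\alpha$-terms crudely (these are $O(\alpha^{2t+1})$ and already shown negligible relative to $(\mup(\cF^t_1))^2$ for $t$ large), and reduce to showing $\mup(\hA)\mup(\hB)<(1-o(1))(\mup(\cF^t_1))^2$. The remaining inequality should be an explicit single- or two-variable inequality in $i,j$ (and $t,p$) which one checks by monotonicity plus finitely many boundary cases, analogous to the computations in Claims~\ref{clm:g(s,1)}--\ref{clm:g(3,2)}; comparison with $\mup(\cF^t_1)=(t+2)p^{t+1}q+p^{t+2}$ will be done using the bounds $e^{-2}<q^t<0.5$ from~\eqref{eq:q^t}.

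\textbf{Main obstacle.} The genuinely delicate step is establishing the sharp trade-off $i+j\leq c$ between how far $\cA$ and $\cB$ can deviate toward larger $D$-walks: this is where the cross-$t$-intersecting hypothesis must be used at full strength, not just through the weight bound of Lemma~\ref{lem:mu_F}, because a naive bound ($i,j$ each at most $n$) is far too weak. The argument will hinge on identifying, for a given walk $D^{t-1}_2(i)\in\cA$, exactly which maximally-shifted walks $D^{t+1}_1(j)$ have $|D^{t-1}_2(i)\cap D^{t+1}_1(j)|<t$ — equivalently, showing $D^{t+1}_1(j)\shiftsto\dual_t(D^{t-1}_2(i))$ for all $j$ above a threshold determined by $i$ — and then invoking Fact~\ref{fact:shift_2} to exclude those $j$ from $\cB$. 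Getting the exact threshold right (and handling the parity issues intrinsic to walks hitting $y=x+\ell$) is the crux; once it is in hand, the rest is bookkeeping of the kind already carried out in Section~\ref{sec:easy cases}. A secondary subtlety is that, unlike the ``easy'' cases, one cannot discard $\dhA$ and $\dhB$ entirely — they may interact with the structure — so I would either fold them into the same $D$-walk analysis (they are dominated by walks hitting $y=x+\ell$ at least twice, hence by $D^\ell_s$ with a suitable shift) or bound them separately by $\alpha^{u+1}$ and $\alpha^{v+1}$ and absorb the loss, checking that the constant still beats $1$ for $t\geq 42$.
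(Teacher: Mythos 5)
Your high-level plan matches the paper's: in the $(2,1)$ case the paper too parametrizes by the largest $i$ with $D^{t-1}_2(i)\in\cA$ and the largest $j$ with $D^{t+1}_1(j)\in\cB$ (call them $I,J$), bounds the pieces of $\cA$ and $\cB$ using Facts~\ref{fact:shift_1}, \ref{fact:shift_2}, \ref{fact:dual}, and finishes with a case analysis on $I,J$. However, the mechanism you nominate as the crux — establishing $I+J\le c$ by showing that $D^{t-1}_2(i)$ and $D^{t+1}_1(j)$ fail to $t$-intersect, or equivalently that $D^{t+1}_1(j)\shiftsto\dual_t(D^{t-1}_2(i))$ once $j$ exceeds a threshold in $i$ — does not hold. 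Computing directly, $D^{t-1}_2(i)=[t-2]\cup\{t,t+2,t+3\}\cup\{t+5+i,\ldots\}$ and $D^{t+1}_1(j)=[t]\cup\{t+2,t+3\}\cup\{t+5+j,\ldots\}$ already share $[t-2]\cup\{t,t+2,t+3\}$, which has $t+1$ elements, so they cross $t$-intersect for all $i,j$. Likewise, the $(t+2)$-nd smallest element of $D^{t+1}_1(j)$ is $t+3$, while that of $\dual_t(D^{t-1}_2(i))=[t+1]\cup[t+4,t+4+i]\cup\{t+6+i,\ldots\}$ is $t+4$, so $D^{t+1}_1(j)\not\shiftsto\dual_t(D^{t-1}_2(i))$ for any $j$, and Fact~\ref{fact:shift_2} gives you nothing here. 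Indeed, in the paper's proof $I$ and $J$ can both be as large as they like (Case 1 handles $I,J\ge 3$ with no further relation between them).

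The actual trade-off is analytic rather than combinatorial, and it lives inside the weight bounds themselves: the paper's Claim~\ref{cl:22A} bounds $\mup(\cA)/p^t$ by an expression that \emph{simultaneously} involves a positive tail term in $J$ (via $a_2\alpha^{J-1}$, coming from the fact that $\dual_t(D^{t+1}_1(J))\not\in\cA$ forces every walk of $\tA$ to cross that dual walk, so $\tA$ is constrained by $J$) \emph{and} a negative hole term in $I$ (via $-a_3 q^{I-2}$, coming from $D^{t-1}_2(I+1)\not\in\cA$); Claim~\ref{cl:23B} is symmetric. This cross-dependence is exactly what your plan discards by bounding $\mup(\tA)$ only by $\alpha^{u+1}$ and $\mup(\hA)$ only by a function of $I$ alone: when $J$ is large the $\cB$-side hole disappears, and you then need the $J$-dependent shrinkage of $\tA$ to compensate, which a crude $\alpha^t$ bound cannot provide. (Two smaller slips: since $D^{t-1}_2(I+1)\not\in\cA$ carves a hole of weight $\sim q^{I+3}$ out of $\cF^{t-1}_2\setminus\tF^{t-1}$, the resulting bound on $\mup(\cA_2)$ is \emph{increasing} in $I$, not decreasing as you state; and although your constant bounds $\alpha^{u+1}=\alpha^t$ on $\dhA,\tA$ are indeed $o\!\left((\mup(\cF^t_1))^2/\mup(\cB)\right)$, discarding the $J$-dependence there is a real loss, not just a convenience.) Once you replace the nonexistent $I+J\le c$ constraint with this two-sided dependence of each family's bound on both $I$ and $J$, the remaining computation — Cases $I\in\{1,2\}$, $J\in\{1,2\}$, and $I,J\ge 3$ — does follow the kind of monotonicity-plus-boundary bookkeeping you anticipated.
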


\begin{proof}

Consider the following  cases of the walks defined in \eqref{eq:D}.
For  $1\leq i\leq n-t-4$, let
   $$D^{t-1}_2(i)=[t-2]\cup \{t,t+2\} \cup \{t+3\} \cup \{t+5+i, t+7 +i, t+9+i, \dots\}\in 
\hF^{t-1}\cap\cF_2^{t-1},$$
and for $1\leq j\leq n-t-4$, let
  $$D^{t+1}_1(j)= [t]\cup \{t+2\}\cup\{t+3\} 
\cup \{t+5+j, t+7+j, t+9+j\dots\} \in 
\hF^{t+1}\cap\cF_1^{t+1}.$$

By Fact~\ref{fact:shift_1} and the fact that
  $\hA\neq\emptyset$ and $\hB\neq\emptyset$, we have that $D^{t-1}_2(1)\in \cA$
 and $D^{t+1}_1(1)\in\cB$. So 
 the following positive integer values are well defined:
 $$I:=\max\{ i : D^{t-1}_2(i)\in \cA\} \hskip 1em \mbox{and} \hskip 1em J:=\max\{j:D^{t+1}_1(j)\in\cB\}.$$

We start with the following general bounds on $\mu_p(\cA)$ and $\mu_p(\cB)$,
we then show, with casework depending on $I$ and $J$, that they are sufficient.

\begin{claim}\label{cl:22A}
Let $t\geq 20$.
For every $\epsilon>0$ the following
holds for sufficienlty large $n$:
\[ \mu_p(\cA)/p^t  < a_1(p,t)+a_2(p,t)\alpha^{J-1} - a_3(p,t)q^{I-2} + \epsilon,\]
  where 
\begin{align*}
 a_1(p,t)&:=1 + tpq + \frac{t(t+3)}{2}pq^2,\\
 a_2(p,t)&:=q^{-t} - 1 - tpq - \frac{t(t+3)}{2}p^2q < 5, \\ 
 a_3(p,t)&:=\frac{(t+2)(t-1)}2pq^5(1-\alpha) >
           \frac{pq^2}2(t^2-7t).
\end{align*}
\end{claim}
\begin{proof}\claimproof
Let $\epsilon>0$ be given and let $\delta=\epsilon/a_1(p,t)$.
As $s = 2$ we have that $\mu_p(\cA)=\mu_p(\tA)+\mu_p(\cA_2)$. 
To bound $\mu_p(\tA)$ observe that since $D^{t+1}_1(J)\in\cB$, 
its dual walk
\[
\dual_t(D^{t+1}_1(J))=[t-1]\cup\{t+1\}\cup[t+4,t+J+4]\cup\{t+J+6,t+J+8,\ldots\}
\]
is not in $\cA$, and all walks in $\cA$ must cross it, 
which is equivalent to hitting $Q_0: = (0,t)$, $Q_1:=(1,t+1)$
or $Q_2:= (2,t+1)$, or hitting the line $L: y=x+(t+J-1)$. Further, walks in 
$\tA\subset\cF^{u}=\cF^{t-1}$ all hit the line $L': y=x+t$.
So we have
\begin{align*} 
\mu_p(\tA) &\leq\mu_p(\mbox{walks in $\tA$ hitting $L$})\\
&\quad+\mu_p(\mbox{walks in $\tA$ not hitting $L$ but hitting $Q_0$}) \\
&\quad+\mu_p(\mbox{walks in $\tA$ not hitting $L$ or $Q_0$ but hitting 
$Q_1$})\\
&\quad+\mu_p(\mbox{walks in $\tA$ not hitting $L$, $Q_0$ or
$Q_1$ but hitting $Q_2$ and $L'$}).
\end{align*}
Using Lemma \ref{lem:mu_F} for the first and last line, and Lemma \ref{lem:epsilon}
for the last three, this gives the following, 
\begin{align} 
\mu_p(\tA) &\leq  \alpha^{t+J-1}+p^t(1-\alpha^{J-1}+\delta) 
+tp^{t+1}q(1-\alpha^{J-1}+\delta) \nonumber\\
&\quad+\frac{t(t+3)}{2}p^{t+1}q^2(\alpha-\alpha^J
+\delta/2).\label{eq:tA}
\end{align}
For the last line we also used that there are 
$\binom{t+3}2-\binom{t+3}1=\frac{t(t+3)}2$ ways of walks from 
$(0,0)$ to $Q_2$ that do not touch the line $L'$. In fact, 
Lemma 2.13(ii) of \cite{F} tells us that the 
number of walks from $(0,0)$ to $(x_0,y_0)$ not hitting the line $y = x + c$ is
\begin{equation}\label{eq:la2.13ii}
\binom{x_0+y_0}{x_0} - \binom{x_0+y_0}{y_0 - c}  
\end{equation}
for $0 < c < y_0 < x_0+c$.

Now we bound $\mup(\cA_2)$. Recall from Lemma~\ref{lem:structure} and 
\eqref{def:bF} that $\cA_2:=\hA\sqcup\dhA\subset \bF^{t-1}_2$.
Any walk in 
$\bF_2^{t-1}=\cF^{t-1}_2\setminus\tF^{t-1}$ hits $Q_2$ without 
hitting the line $L'$,  so without hitting $Q_0$ or $Q_1$, and then 
continues on without hitting $L'$. 
So we have 
\begin{equation}\label{eq:hA}
\mu_p(\bF_2^{t-1})\leq \frac{t(t+3)}{2}p^{t+1}q^2(1-\alpha+\delta/2).
\end{equation}

On the other hand, as $D_2^{t-1}(I+1)\not\in\cA_2$, $\cA_2$ contains no 
walks in
 \[ \cW = \{W\in\hF^{t-1}\cap\cF_2^{t-1}:W\to D^{t-1}_2(I+1)\}. \]
Such walks hit $(2,t)$ without hitting the line $y = x + (t-1)$, 
then hit $(2,t+1)$ and then $(I+3,t+1)$ on the line
$y = x + (t - I - 2)$. After that, they never hit the 
line $y = x + (t - I - 1)$. 
Using \eqref{eq:la2.13ii} for $x_0=2,y_0=t,c=t-1$ we have  
\begin{equation}\label{eq:eA}
  \mu_p(\cW)  \geq\(\binom{t+2}2-\binom{t+2}1\)p^{t+1}q^{I+3}(1-\alpha) = 
   a_3(p,t)p^tq^{I-2}.
\end{equation}

We now combine \eqref{eq:tA}, \eqref{eq:hA} and \eqref{eq:eA} using the fact that
$\cA = \tA \cup \cA_2 \subset \tA \cup (\bF_2^{t-1} \setminus \cW)$. Observing how nicely
\eqref{eq:hA} combines with the last term in~\eqref{eq:tA}, we get
\[
\mu_p(\cA) / p^t 
\leq \frac{\alpha^{J-1}}{q^t} + (1 - \alpha^{J-1} + \delta)
(1 + tpq ) +\frac{t(t+3)}{2}pq^2(1-\alpha^J+\delta)
-  a_3(p,t)q^{I-2}.
\]
Rearranging this we get 
\begin{equation*}
\mup(\cA) / p^t \leq (1+\delta)a_1(p,t)+a_2(p,t)\alpha^{J-1} - a_3(p,t)q^{I-1},
\end{equation*}
which is equivalent to the statement of the claim.

To get the bound on $a_2(p,t)$,  
recall from \eqref{eq:q^t} that $q^{-t} < e^2$
 and observe that the other terms in $a_2(p,t)$ are decreasing in $p$, so
 for $t \geq 20$ we have
\[  a_2(p,t) \leq a_2\(\frac1{t+1},t\) < e^2 - 1 - 
 \frac{t^2(3t+5)}{2(t+1)^3} 
  < e^2 - 1- 1.4 < 5. \] 

To get the bound on $a_3(p,t)$, observe that $q^3(1 - \alpha)$ is decreasing in 
$p$, so  letting $p=\frac2{t+3}$ it follows that 
\[
 q^3(1-\alpha)\geq\(\frac{t+1}{t+3}\)^3\(\frac{t-1}{t+1}\)
=\frac{(t-1)(t+1)^2}{(t+3)^3}
\]
and
\begin{align*}
  a_3(p,t) &= \frac{pq^2}2 \( (t+2)(t-1)q^3(1-\alpha) \)
\geq \frac{pq^2}2 (t+2)(t-1) \frac{(t-1)(t+1)^2}{(t+3)^3}\\
     & = \frac{pq^2}2 \( \frac{t^5 + 2t^4 - 2t^3 - 4t^2 + t + 2}{t^3 + 9t^2 + 27t + 27} \) 
      > \frac{pq^2}2(t^2-7t),
 \end{align*}
 which gives the bound. 
\end{proof} 

Similarily, we get the following. The proof is in the  appendix. 

\begin{claim}\label{cl:23B}
 For every $\epsilon>0$ the following holds for $t \geq 18$:
  \[ \mup(\cB) / p^{t+2} \leq  b_1(p,t)+b_2(p,t)\alpha^{I-1}- b_3(p,t)q^{J-2} + \epsilon, \] 
where 
\begin{align*}
 b_1(p,t)&:=1 + (t+2)q, \\
 b_2(p,t)&:=q^{-(t+2)} -1 - (t+2)p < 4.5, \\
 b_3(p,t)&:=(t+1)q^4(1-\alpha) > (t-7)q. 
\end{align*}
\claimproof \qed
\end{claim}

To prove the lemma it is now  enough to show that
     \[ (\mup(\cA) / p^t)(\mup(\cB) / p^{t+2}) < z^2, \]
where
\begin{equation}\label{eq:def of z}
z = \mup(\cF^t_1)/p^{t+1}= t+2 - (t+1)p.   
\end{equation}
 We have cases depending on $I$ and $J$.

\smallskip\noindent{$\bullet$ \bf Case 1.} \textit{Suppose $I\geq 3$ and $J\geq 3$.}

  First observe that for $J \geq 3$ we have 
  \begin{equation}\label{eq:A}
   \mu_p(\cA)/p^t < a_1(p,t)+5\alpha^{2}.   
  \end{equation}
  Indeed if $J = 3$ this is immediate from  Claim \ref{cl:22A}
  by taking $\epsilon < (5 - a_2(p,t))\alpha^2$.  
  For $J \geq 4$, Claim \ref{cl:22A} gives that 
  $\mu_p(\cA)/p^t<a_1(p,t)+5\alpha^{3}+ \epsilon$. Because 
  $\alpha^3 < \alpha^2$, the claim follows by taking 
  $\epsilon < 5(\alpha^{2} - \alpha^{3})$. 
  Similarily, it follows from Claim \ref{cl:23B} that for $I \geq 3$ and
  $t \geq 18$ 
  we have
  \begin{equation*}
    \mu_p(\cB)/p^{t+2} < b_1(p,t)+4.5\alpha^{2}. 
  \end{equation*}
So it suffices to show $xy<z^2$ where
$x:=a_1(p,t)+5\alpha^2$,
$y:=b_1(p,t)+4.5\alpha^2$.
One can show that $y/z$ ins increasing in $p$ Clearly $x$ is increasing and 
$z$ is decreasing. One can also show that $y/z$ is increasing (see~\ref{app:othercomp}), so it is enough to check the inequality $xy-z^2<0$ at $p=\frac2{t+3}$.
By direct computation we see that this is true if $t\geq 42$.


\smallskip\noindent{$\bullet$ \bf Case 2.} \textit{Suppose that $I=1$ or $2$.}

By Claim~\ref{cl:22A} we get
\begin{align*}
   \mup(\cA)/p^t & < a_1(p,t) + a_2(p,t)\alpha^{J-1} - a_3(p,t) + \epsilon \\
      & < a_1(p,t) + 5 - a_3(p,t) \\
      & < 1 + tpq + 5tpq^2 + 5 < 18.
\end{align*} 
 The last inequality uses that $pq$ and $pq^2$ are increasing in $p$, so $p$ can
 be taken as $2/(t+3)$. 
 By Claim~\ref{cl:23B} we have that 
 $\mup(\cB)/p^{t+2} < b_1(p,t) + 4.5 = z + q + 4.5 < z + 5$, and so 
 \[(\mup(\cA)/p^t)(\mup(\cB)/p^{t+2}) < 18(z + 5).  \] 
Since $18(z+5) < z^2$ if $18 \leq z-5$ we see that $z\geq 23$ suffices.
Since $z$ is minimized when $p=\frac2{t+3}$ and $z\geq t+\frac4{t+3}$,
it follows that $z\geq 23$ if $t\geq 23$.

 \smallskip\noindent{$\bullet$ \bf Case 3.} \textit{Suppose that $J=1$ or $2$.} 
 By Claim~\ref{cl:22A} we get that  
 \begin{align*}
 \mup(\cA)/p^t &< a_1(p,t) + a_2(p,t) < 1 + tpq + \frac{t(t+3)}2pq^2+5\\
                 &< 1 + 2 + t + 5 = t+8.
 \end{align*}
  The third inequality uses that $pq$ and $pq^2$ are increasing in $p$ so 
   $p =2/(t+3)$ can be assumed. 

 From Claim~\ref{cl:23B} we get that 
  \begin{align*}
     \mup(\cB)/p^{t+2} & < b_1(p,t)+ b_2(p,t) - b_3(p,t) + \epsilon \\ 
         & < b_1(p,t) + 4.5 - b_3(p,t) < 14.5. 
   \end{align*}
   So $(\mup(\cA)/p^t)(\mup(\cB)/p^{t+2}) < 14.5(t+8)$ which is less than 
   $z^2$ for $t \geq 23$. 
   
  This completes the proof for Case 3, and so for the lemma.
\end{proof}

 \begin{lemma}\label{lem:(1,0)}
Let $t\geq 26$.
For $(s,s')=(1,0)$, we have $\mup(\cA)\mup(\cB)< \(\mup(\cF^t_1)\)^2$. 
\end{lemma}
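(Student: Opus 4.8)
The plan is to mimic the proof of Lemma~\ref{lem:(2,1)}, but with the roles of the families adjusted to reflect that now $s=1$ and $s'=0$. Since $u=t-1$ and $v=t+1$ exactly as in the $(2,1)$ case, the family $\cB$ lies in $\cF^{t+1}$ with $\hB\sqcup\dhB\subset\cF^{t+1}_0 = \cF^{t+1}$, which means $\cB$ itself satisfies $\cB\subset\cF^{t+1}$; this is a strong constraint. Meanwhile $\cA = \hA\sqcup\dhA\sqcup\tA$ with $\hA\sqcup\dhA\subset\bF^{t-1}_1$. So I would first set up the extremal walks: for $1\leq i\leq n-t-2$ let
\[
D^{t-1}_1(i) = [t-1]\cup\{t+1\}\cup\{t+2+i, t+4+i, \dots\}\in\hF^{t-1}\cap\cF^{t-1}_1,
\]
and for $1\leq j\leq n-t-2$ let
\[
D^{t+1}_0(j) = [t+1]\cup\{t+2+j, t+4+j,\dots\}\in\hF^{t+1}\cap\cF^{t+1}_0.
\]
By Fact~\ref{fact:shift_1} and $\hA,\hB\neq\emptyset$, we have $D^{t-1}_1(1)\in\cA$ and $D^{t+1}_0(1)\in\cB$, so $I:=\max\{i: D^{t-1}_1(i)\in\cA\}$ and $J:=\max\{j: D^{t+1}_0(j)\in\cB\}$ are well-defined positive integers.

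Next I would prove two claims bounding $\mu_p(\cA)/p^t$ and $\mu_p(\cB)/p^{t+1}$ in the style of Claims~\ref{cl:22A} and~\ref{cl:23B}. For $\cA$: since $D^{t+1}_0(J)\in\cB$, its dual $\dual_t(D^{t+1}_0(J))$ is not in $\cA$, and all walks in $\cA$ must cross it — this is equivalent to hitting $Q_0:=(0,t)$ or $Q_1:=(1,t+1)$ (there is no $Q_2$ now since $s=1$), or hitting the line $y=x+(t+J-1)$. Walks in $\tA\subset\cF^{t-1}$ all hit $y=x+t$, while walks in $\bF^{t-1}_1$ hit $(1,t)$ without hitting $y=x+(t-1)$ and then continue without hitting $y=x+t$. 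Using Lemmas~\ref{lem:mu_F} and~\ref{lem:epsilon} as in Claim~\ref{cl:22A}, together with the fact that $D^{t-1}_1(I+1)\not\in\cA$ removes a family $\cW$ of walks of weight at least a multiple of $p^t q^{I-1}$, I would get a bound of the form
\[
\mu_p(\cA)/p^t < a_1(p,t) + a_2(p,t)\alpha^{J-1} - a_3(p,t)q^{I-1} + \epsilon,
\]
with $a_1 = 1 + tpq$, $a_2 = q^{-t} - 1 - tpq < 5$, and $a_3$ of order $t\cdot pq^{\cdots}(1-\alpha)$, hence bounded below by roughly a constant times $t$. For $\cB$: since $\cB\subset\cF^{t+1}$ and $D^{t-1}_1(I)\in\cA$ forbids its dual from $\cB$, I would similarly obtain $\mu_p(\cB)/p^{t+1} < b_1(p,t) + b_2(p,t)\alpha^{I-1} - b_3(p,t)q^{J-1} + \epsilon$ with $b_1 = 1 + (t+1)q$ (this matches $z = t+2-(t+1)p$ up to the additive $q$), $b_2 < 4.5$, and $b_3$ of order $tq$.

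Finally I would finish by the same three-case analysis on the sizes of $I$ and $J$: (Case 1) $I\geq 3$ and $J\geq 3$, where the $\alpha^{J-1},\alpha^{I-1}$ terms are tiny and one checks $xy<z^2$ at $p=2/(t+3)$ after verifying monotonicity of $x$ (increasing), $z$ (decreasing), and $y/z$ (increasing); (Case 2) $I\in\{1,2\}$, where the negative term $-a_3(p,t)$ kills the $a_2$ contribution in $\mu_p(\cA)$, giving a bound on $\mu_p(\cA)/p^t$ that is $O(1)$, so the product against $\mu_p(\cB)/p^{t+1} < z + O(1)$ is $O(z) < z^2$ for $t$ large; (Case 3) $J\in\{1,2\}$, symmetric, where $-b_3(p,t)$ controls $\mu_p(\cB)$. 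The main obstacle is the same as in Lemma~\ref{lem:(2,1)}: in Case~1 the inequality $xy < z^2$ is genuinely tight, so one must be careful with the constants in $a_2, b_2$ and must actually verify the monotonicity of $y/z$ in $p$; getting the threshold down to $t\geq 26$ (rather than a larger value) requires the estimates to be reasonably sharp, in particular using $q^{-t}<e^2$ from~\eqref{eq:q^t} and the fact that the nuisance terms are monotone in $p$ so that $p$ can be set to an endpoint of its range.
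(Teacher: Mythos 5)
Your high-level plan matches the paper's proof (define the extremal walks $D^{t-1}_1(i)$ and $D^{t+1}_0(j)$, take $I,J$ maximal, prove two analogues of Claims~\ref{cl:22A} and~\ref{cl:23B}, and split into cases by $I,J$), but several of the concrete ingredients you propose are wrong in ways that break the argument.

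First, your definition $D^{t-1}_1(i)=[t-1]\cup\{t+1\}\cup\cdots$ gives a walk that passes through $(0,t-1)$ and $(1,t)$, both on $y=x+(t-1)$, so this walk is in $\dhF^{t-1}$, not $\hF^{t-1}$. The correct walk (matching \eqref{eq:D}) is $D^{t-1}_1(i)=[t-2]\cup\{t\}\cup\{t+1\}\cup\{t+3+i,t+5+i,\ldots\}$, which hits $y=x+(t-1)$ only at $(1,t)$. Similarly your $D^{t+1}_0(j)$ has the tail shifted by one. Second, the geometric condition that $A\not\shiftsto\dual_t(D^{t+1}_0(J))$ forces $A$ to hit $(0,t)$ or $(1,t)$ or the line $y=x+(t+J-1)$; your claimed $Q_1=(1,t+1)$ is the wrong corner (you carried it over from the $(2,1)$ case, where $\cA_2\subset\cF^{t-1}_2$, but here $\cA_1\subset\cF^{t-1}_1$, which hits $(1,t)$).

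Most importantly, the coefficients you propose do not hold and would not close the casework. Walks in $\cA$ through $(1,t)$ but not $(0,t)$ have weight about $tp^tq$, so $\mu_p(\cA)/p^t$ has a first-order term $tq$, not $tpq$; the correct $a_1$ is $1+tq$ (of order $t$), not $1+tpq$ (of order $1$). On the $\cB$ side you normalize by $p^{t+1}$, but then the product $(\mu_p(\cA)/p^t)(\mu_p(\cB)/p^{t+1})$ must be compared to $z^2p$, not $z^2$ — and with your $a_1\approx 2$ and $b_1\approx t$, the product is about $2t$, which exceeds $z^2p\approx t$. The paper normalizes $\mu_p(\cB)$ by $p^{t+2}$, giving $b_1=1/p$; then $a_1b_1=(1+tq)/p$, which at $p=1/(t+1)$ equals $t^2+t+1<(t+1)^2=z^2$, genuinely tight, and the full estimate needs the careful bounds $a_2=q^{-t}<7.4$, $a_3=(t-1)q^3(1-\alpha)$, $b_2=q^{-(t+2)}<7.4$, $b_3=(q^2/p)(1-\alpha)$. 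Your sketch identifies the right architecture and case split, but the claims you state are not true as written, so the arithmetic in each case must be redone with the corrected coefficients and normalization before the lemma follows.
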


\begin{proof}

Again, consider the following particular cases of walks defined in 
\eqref{eq:D}. For $1\leq i\leq n-t-2$, let
$$D^{t-1}_1(i)=[t-2]\cup \{t\}\cup\{t+1\}\cup 
\{t+3+i,t+5+i,t+7+i,\ldots\}\in 
\hF^{t-1}\cap\cF_1^{t-1}.$$
For $1\leq j\leq n-t-2$, let
$$D^{t+1}_0(j)= [t+1]\cup 
\{t+3+j,t+5+j,t+7+j,\dots\} \in 
\hF^{t+1}\cap\cF_0^{t+1}.$$
 Again the following values are well defined: $$I:=\max\{ i : D^{t-1}_1(i)\in \cA\} \hskip 1em \mbox{and} \hskip 1em J:=\max\{j:D^{t+1}_0(j)\in\cB\}.$$

 Analagous to Claims \ref{cl:22A} and \ref{cl:23B} we get the following two claims,
 which are proved in the appendix.
  
 \begin{claim}\label{cl:10A}
For every $\epsilon>0$ the following holds:
    \[ \mu_p(\cA)/p^t  < a_1(p,t)+a_2(p,t)\alpha^{J-1} - a_3(p,t)q^{I-1} + \epsilon, \]
  where 
\begin{align*}
 a_1(p,t):=1 + tq, \qquad  a_2(p,t):=q^{-t} < 7.4, \qquad
   a_3(p,t):=(t-1)q^3(1-\alpha) > (t-7)q.  
\end{align*}
 \end{claim}

 \begin{claim}\label{cl:10B}  
For every $\epsilon>0$ the following holds for $t \geq 20$:
  \[ \mup(\cB) / p^{t+2} \leq  b_1(p,t)+b_2(p,t)\alpha^{I-1}- b_3(p,t)q^{J-1} + \epsilon, \] 
where 
\begin{align*}
 b_1(p,t):=1/p, \qquad b_2(p,t):=q^{-(t+2)} < 7.4, \qquad 
 b_3(p,t):=(q^2/p)(1-\alpha) > .75/p.    
\end{align*}
\end{claim}

Using these claims, we finish the lemma by considering three cases.

\smallskip\noindent{$\bullet$ \bf Case 1:} \textit{Suppose that $I\geq 2$ and $J\geq 2$.}
 As \eqref{eq:A} followed from Claim \ref{cl:22A} for $I,J \geq 3$ we have that 
 for $I,J \geq 2$ and $t \geq 20$, the following inequalities follow from Claims
  \ref{cl:10A} and  \ref{cl:10B}.
 \begin{gather*}
 \mup(\cA)/p^t < 1 + tq + 7.4\alpha =: a,\\
 \mup(\cB)/p^{t+2} < 1/p + 7.4\alpha =: b.
 \end{gather*}

We need to show that $ab/z^2 < 1$, where $z$ is defined in 
\eqref{eq:def of z}.
As $z>(t+2)q$ we show that $ab<((t+2)q)^2$, and
it is enough to show that $a<(t+2-0.335)q$ and $b<(t+2+0.335)q$.
The former is equivalent to $1+7.4\alpha<1.665q$, 
which is true for $p<0.069$. So it holds for $t \geq 26$. 
The latter is equivalent to $(1/p+7.4\alpha)/q<t+2.335$,
the left side of which is decreasing in $p$ for $p < .1$.
Evaluating it at $p=1/(t+1)$ we see that it too holds for $t\geq 26$.


\smallskip\noindent{$\bullet$ \bf Case 2:} \textit{Suppose that $I=1$.}
From Claims \ref{cl:10A} and  \ref{cl:10B} we get that
\begin{align*}
 \mup(\cA)/p^t &< (1+tq) + a_2(p,t)\alpha^{J-1} - (t-7)q +\epsilon\\
&<1 + 7q + 7.4 =: a,\\
 \mup(\cB)/p^{t+2} &< 1/p + 7.4 =: b.
\end{align*}

Again we need to show that $ab/z^2 < 1$.
It is enough to show that $c:=ab/((t+2)q)^2 < 1$; and
indeed, $c$ is decreasing in $p$ so evaluating $c$ at $p=1/(t+1)$
we see that $c$ is at most $\frac{7 (t+1) (5 t+42) (11t+6)}{25 t^2 (t+2)^2}<1$
for $t\geq 20$.


\smallskip\noindent{$\bullet$ \bf Case 3:} \textit{Suppose that $J=1$.}  

 From Claims \ref{cl:10A} and  \ref{cl:10B} we get that
 \begin{gather*}
 \mup(\cA)/p^t < 1 + tq + 7.4 =: a,\\
 \mup(\cB)/p^{t+2} < .25/p + 7.4 =: b.
 \end{gather*}   

Again we show that $ab/q^2 < (t+2)^2$.
We have $a/q=8.4/q+t\leq \frac{8.4(t+3)}{t+1}+t$.
On the other hand $b/q$ is decreasing in $p$ for $p<0.15$,
and evaluating it at $p=1/(t+1)$ we have 
$b/q\leq\frac{(t+1) (5 t+153)}{20 t}$.
Using these inequalities we see that $ab/q^2<(t+2)^2$ for $t\geq 16$.

\end{proof}


\subsection{Extremal cases}

Finally, we consider the cases $(s,s')=(0,0)$, $(1,1)$, $(2,2)$. 

\begin{lemma}\label{lem:extremal}
For $(s,s)=(0,0), (1,1), (2,2)$, we have 
\begin{equation}\label{eq:bounded}
\mup(\cA)\mup(\cB)\leq \(\mup(\cF^t_1)\)^2.
\end{equation}
 Moreover,
equality holds if and only if one of the following holds:
\begin{enumerate}\label{eq:equality_2}
\item $\cA=\cB= \cF^t_0$ and $p=\dfrac{1}{t+1}$,
\item $\cA=\cB= \cF^t_1$ and $\dfrac{1}{t+1}\leq p\leq \dfrac{2}{t+3}$,
\item $\cA=\cB= \cF^t_2$ and $p=\dfrac{2}{t+3}$.
\end{enumerate}


\end{lemma}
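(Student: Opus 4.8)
The plan is to treat the three diagonal cases $(s,s')=(0,0),(1,1),(2,2)$ in a unified way, exploiting the fact that in each case $u=v=t$, so both $\cA$ and $\cB$ live in $\cF^t$ and both are controlled by the \emph{same} family $\cF_s^t$. First I would record that $\cA=\tA\sqcup\hA\sqcup\dhA\subset\cF^t$ with $\hA\sqcup\dhA\subset\cF_s^t$, and symmetrically for $\cB$, so that $\mup(\cA)\leq\mup(\tF^t\cup\bF^t_s)$ and likewise for $\cB$. The key structural input is that the dual operation $\dual_t$ interacts with these families: if $A\in\cA$ reaches high up along $[t+2s]$ (i.e.\ lies deep in $\cF_s^t$), then $\dual_t(A)\notin\cB$ by Fact~\ref{fact:dual}, which forces $\cB$ to avoid a corresponding family of walks, and vice versa. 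So the weights of $\cA$ and $\cB$ cannot \emph{both} be large: there is a trade-off, and the product $\mup(\cA)\mup(\cB)$ is maximized when $\cA$ and $\cB$ are ``balanced'', which is exactly when both equal $\cF_s^t$.

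Concretely, for each case I would parametrize $\cA$ and $\cB$ by how far the maximally-shifted walks of the relevant shape extend (analogous to the integers $I,J$ used in Lemmas~\ref{lem:(2,1)} and~\ref{lem:(1,0)}), obtain upper bounds of the form $\mup(\cA)/p^t \leq P_s(\alpha^{J},q^{I})$ and $\mup(\cB)/p^t \leq P_s(\alpha^{I},q^{J})$ where $P_s$ is increasing in the first argument and decreasing in the second, and then bound the product. The point is that $\mup(\cF_s^t)/p^t$ is a fixed value $w_s$, and for each case one checks $w_s^2 \geq \bigl(\mup(\cF^t_1)/p^{t}\bigr)^2 \cdot (\text{correction})$; actually since $u=v=t$ here one compares $\mup(\cF^t_s)^2$ against $\mup(\cF^t_1)^2$ directly using~\eqref{p-range}: for $s=0$ this needs $p=\tfrac1{t+1}$ (the left endpoint), for $s=2$ it needs $p=\tfrac2{t+3}$ (the right endpoint), and for $s=1$ it holds throughout $[\tfrac1{t+1},\tfrac2{t+3}]$ — this is precisely where the case distinctions in the statement of equality come from, and why the extremal $p$ is forced at the endpoints.

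For the equality characterization I would argue that equality in the weight bound forces $\tA=\tF^t\cap(\text{allowed})$ to be as large as possible and $\hA\sqcup\dhA$ to fill out $\cF_s^t$ exactly, with no room for the dual-exclusion to bite; tracing through, this forces $\cA\subset\cF_s^t$ and $\cB\subset\cF_s^t$ with $\mup(\cA)=\mup(\cB)=\mup(\cF_s^t)$, and then maximality plus shiftedness (Assumption~\ref{assum:maximal_shifted}, together with Fact~\ref{fact:shift_1}) pins down $\cA=\cB=\cF_s^t$; finally Lemma~\ref{lem:shifting2} removes the shiftedness assumption to give $\cA=\cB\cong\cF_s^t$. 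The main obstacle I expect is the case $(s,s')=(2,2)$: here $\cF_2^t$ is genuinely two steps away from trivial, the combinatorial bookkeeping of which walks are excluded by which duals is most intricate, and the polynomial inequality in $t$ and $p$ certifying $\mup(\cA)\mup(\cB)\leq\mup(\cF^t_1)^2$ (with equality only at $p=\tfrac2{t+3}$) will require the most delicate estimation — this is presumably why the case analysis below is organized around $(2,2)$ and why the threshold $t\geq 200$ ultimately enters.
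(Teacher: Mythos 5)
Your high-level reading is correct: the dual operation creates a trade-off so that $\cA$ and $\cB$ cannot both extend beyond $\cF_s^t$, the product is maximized when both equal $\cF_s^t$, and the equality characterization comes from comparing $\mup(\cF_s^t)$ with $\mup(\cF_1^t)$ across the $p$-range \eqref{p-range}, giving the three listed cases at the interval endpoints. And you are right that Lemma~\ref{lem:shifting2} is what ultimately removes the shiftedness assumption (though that step belongs to the final assembly of Theorem~\ref{thm:main}, not inside this lemma).

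However, there is a concrete gap in your parametrization step. You propose to set $I=\max\{i:D^t_s(i)\in\cA\}$ and $J=\max\{i:D^t_s(i)\in\cB\}$ and then bound the product. This works automatically for $s=0,1$ because there $D^t_s(1)$ is the shift-minimal walk of $\hF^t\cap\cF_s^t$, so $\hA,\hB\neq\emptyset$ together with Fact~\ref{fact:shift_1} forces $D^t_s(1)\in\cA\cap\cB$. For $s=2$ this fails: $D^t_2(1)$ is \emph{not} shift-minimal in $\hF^t\cap\cF_2^t$ — there are less-shifted walks in $\hF^t\cap\cF_2^t$ that do not shift to $D^t_2(1)$, so $\hA\neq\emptyset$ does not imply $D^t_2(1)\in\cA$, and your $I$ may be undefined. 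The paper splits this off as a separate Case~II, replaces $D^t_2(\cdot)$ by a different anchor walk $E(\cdot)$ with its own parameter $K$, and shows directly that both $\mup(\cA)$ and $\mup(\cB)$ are small enough; this is precisely where the threshold $t\geq 180$ (driving $t\geq 200$) enters, via \eqref{eq:t=180}. Your instinct that $(2,2)$ is the hardest case and the source of the $t$-threshold is right, but for a different reason than "delicate polynomial estimation in the generic $(2,2)$ case."

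A second, less fatal, divergence: you propose multiplying polynomial upper bounds $P_s(\alpha^J,q^I)\cdot P_s(\alpha^I,q^J)$. The paper instead shows, via the single Claim~\ref{clm:B-F}, that whenever one of $I,J$ is not maximal one has $\mup(\cB\setminus\cF_s^t)<0.99\,\mup(\cF_s^t\setminus\cA)$ (and symmetrically), which yields $\mup(\cA)+\mup(\cB)<2\mup(\cF_s^t)$ and finishes by AM--GM. This additive argument is cleaner than a product of polynomial bounds: it yields strict inequality automatically outside the $I=J=i_{\max}$ case, so the equality characterization falls out with no slack to chase. With your multiplicative bounds you would have to be careful that the estimates are exactly tight at $\cA=\cB=\cF_s^t$, which the crude inequalities (like $q^{-t}<e^2$) used in Lemmas~\ref{lem:(2,1)} and~\ref{lem:(1,0)} are not — those lemmas prove strict inequality and do not need to be sharp.
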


\begin{proof}
In these cases, we have $u=v=t$. Recalling~\eqref{eq:D} and~\eqref{eq:range_i}, we let
\begin{align*}
D^t_s(i):=&[t-1]\cup \{t+s, t+s+1,\dots, t+2s\}  \nonumber \\ &\cup \{t+2s+i+2k\in [n] : k=1,2,\dots\}\in \hF^{t}\cap\cF_s^{t}
\end{align*}
for $1\leq i\leq n-t-2s-1=:i_{\max}.$

In order to define $$I=\max\{i \::\: D^t_s(i)\in \cA \} \hskip 1em \mbox{and} \hskip 1em J=\max\{i \::\: D^t_s(i)\in \cB \},$$ the sets $\{i \::\: D^t_s(i)\in \cA \}$ and $\{i \::\: D^t_s(i)\in \cB \}$ should not be empty. Hence, we consider the following two cases separately:
\begin{itemize}
\item \textbf{Case I:} \hskip 1.45em $D^t_s(1)\in \cA$ and $D^t_s(1)\in \cB$.
\item \textbf{Case II:} \hskip 1em $D^t_s(1)\not\in \cA$\hskip 0.7em or \hskip 0.4em $D^t_s(1)\not\in \cB$\end{itemize}

As $D^t_s(1)$ is the shift minimal walk in $\hF^{t}\cap\cF_s^{t}$ for 
$s = 0$ and $1$, and as the subsets $\hA$ and $\hB$ are non-empty, 
we have that Case I holds if $s = 0$ or $1$. So in Case II we may assume that
$s \geq 2$. 

\smallskip\noindent{$\bullet$ \bf Case I:} \textit{Suppose that $D^t_s(1)\in \cA$ and $D^t_s(1)\in \cB$.}

 First, we suppose that $I=J=i_{\max}$.
Since $D^t_s(i_{\max})\in \cA$, Fact~\ref{fact:dual} gives that
\[
\dual_t\(D^t_s(i_{\max})\)=[n]\setminus\{t+s,t+s+1,\dots,t+2s\}
\] 
is not contained in $\cB$. 
Consequently, Fact~\ref{fact:shift_2} gives that each walk $B\in\cB$ satisfies $ B \not\shiftsto \dual_t\(D^t_s(i_{\max})\)$. 
Hence, $\cB\subset\cF_s^t$ holds.
Similarly, $J=i_{\max}$ implies $\cA\subset\cF_s^t$. 
Therefore, we have $$\mup(\cA)\mup(\cB)\leq \(\mup(\cF^t_s)\)^2$$
with equality holding iff
$\cA=\cB=\cF_s^t$. This together with $1/(t+1)\leq p\leq 2/(t+3)$ implies~\eqref{eq:bounded} and~\eqref{eq:equality_2}.

Therefore, we can assume that $I\neq i_{\max}$ or $J\neq i_{\max}$. Without loss of generality, let $I\neq i_{\max}$. The following holds for every $s\geq 0$ (not only for $0\leq s\leq 2$).

\begin{claim}\label{clm:B-F}
If $I\neq i_{\max}$ and $0\leq s< \infty$, then
\begin{equation}\label{eq:F-A} 
\mup(\cF^t_s\setminus \cA)\geq \binom{t+s-1}{s}p^{t+s}q^{s+I+1}(1-\alpha)
\end{equation}
and
\begin{equation}\label{eq:B-F}
\mup(\cB\setminus \cF^t_s)\leq \alpha^{t+I}.
\end{equation}
\end{claim}
\begin{proof}
First, we show~\eqref{eq:F-A}. Consider a walk $W$ that hits $(s,t+s)$ and satisfies $W \shiftsto D^t_s(I+1)$. 
Since $D^t_s(I+1)\not\in \cA$, Fact~\ref{fact:shift_2} gives $W\in\cF^t_s \setminus \cA$.
Also, $W$ must hit $Q_1=(s,t-1)$ and $Q_2=(s+I+1,t+s)$. The number of walks from $(0,0)$ to $Q_1$ is $\binom{t+s-1}{s}$, then there is the unique walk from $Q_1$ to $Q_2$ which hits $(s,t+s)$.
So the weight of the family of all such walks is $\binom{t+s-1}{s}p^{t+s}q^{s+I+1}$.     
To satisfy $W\shiftsto D_{I+1}$, the walk $W$
must not hit the line $y = x + (t - I)$. By Lemma~\ref{lem:mu_F}(i), this happens with probability at least
$1 - \alpha$, which yields \eqref{eq:F-A}.

Next, we show~\eqref{eq:B-F}. Since $D^t_s(I)\in \cA$, we have that $\dual_t\(D^t_s(I)\)\not\in\cB$, and hence, 
each walk $B\in\cB$ must hit $(0,t+s), (1,t+s),\dots,(s,t+s)$,
or $y=x+(t+I)$. 
Note that each walk hitting $(0,t+s), (1,t+s), \dots,\mbox{or } (s,t+s)$ is contained in $\cF^t_s$. Thus, 
each walk $B\in\cB\setminus \cF^t_s$ hits $y=x+(t+I)$. 
Lemma~\ref{lem:mu_F}(i) gives~\eqref{eq:B-F}.
\end{proof}

Claim~\ref{clm:B-F} together with $0\leq s\leq 2$ implies the following.

\begin{corollary}\label{I neq imax}
If $I\neq i_{\max}$ and $0\leq s\leq 2$, then 
$\mup(\cB\setminus\cF^t_s)<0.99\mup(\cF^t_s\setminus\cA)$.
\end{corollary}

\begin{proof}
Inequalities \eqref{eq:F-A} and \eqref{eq:B-F} give that
\begin{align*}
\frac{\mup(\cF^t_s\setminus \cA)}{\mup(\cB\setminus \cF^t_s)}&\geq \frac{\binom{t+s-1}{s}p^{t+s}q^{s+I+1}(1-\alpha)}
{\alpha^{t+I}}=\binom{t+s-1}{s}p^sq^{t+s+1}\(\frac{q^2}{p}\)^I(1-\alpha) \nonumber \\
&\geq \binom{t+s-1}{s}p^sq^{t+s}\(\frac{q^2}{p}\)(q-p)=\binom{t+s-1}{s}p^{s-1}q^{t+s+2}(q-p),
\end{align*}
where the second inequality holds since $q^2/p>1$ for $p<0.38$. Since $1/(t+1)\leq p\leq 2/(t+3)$, one can easily check that 
$$\binom{t+s-1}{s}p^{s-1}q^{t+s+2}(q-p)>1.02$$ if $s=0$ and $t\geq 17$, or if $s=1$ and $t\geq 12$, or if $s=2$ and $t\geq 22$.
\end{proof}

On the other hand, we also claim that 
\begin{equation}\label{eq:A-F}
\mup(\cA\setminus\cF^t_s)<0.99 \mup(\cF^t_s\setminus\cB).
\end{equation}
 Indeed, if $J\neq i_{\max}$, then Corollary~\ref{I neq imax} gives~\eqref{eq:A-F}.
 Otherwise, $J=i_{\max}$, and hence, $\cA\subset \cF^t_s$. Thus,~\eqref{eq:A-F} trivially holds.

 We infer that
 \begin{align*}
 \mup(\cA)+\mup(\cB)&=\(\mup(\cA\cap \cF^t_s)+\mup(\cA\setminus \cF^t_s)\)+\(\mup(\cB\cap \cF^t_s)+\mup(\cB\setminus \cF^t_s)\) \\
 &<\(\mup(\cA\cap \cF^t_s)+0.99\mup( \cF^t_s\setminus \cA)\)+\(\mup(\cB\cap \cF^t_s)+0.99\mup( \cF^t_s\setminus \cB)\) \\
 &<2\mup(\cF^t_s),
 \end{align*}
where the inequality follows from Corollary~\ref{I neq imax} and~\eqref{eq:A-F}. Therefore,
$$ \sqrt{ \mup(\cA)\mup(\cB)}\leq \frac{ \mup(\cA)+\mup(\cB)}{2}<\mup(\cF^t_s),$$
which gives~\eqref{eq:bounded} without equality.

\smallskip\noindent{$\bullet$ \bf Case II:} \textit{Suppose that $D^t_s(1)\not\in \cA$ or $D^t_s(1)\not\in \cB$.} 

As we observed before Case I, in Case II we may assume that 
$s \geq 2$.  
Also, without loss of generality, we let $D^t_s(1)\not\in \cA$, 
so every $A\in \cA$ satisfies $A\not\shiftsto D^t_s(1)$.

For $1\leq i\leq n-t-5$, let
\begin{equation*}
 E(i):=[t-1]\cup\{t+1,t+3,t+4\}\cup \{t+4+i+2j\in[n]:j=1,2,\ldots\}.
\end{equation*} 
For any $A\in \hA\neq \emptyset$, we have $A\shiftsto E(1)$,
 and hence, Fact~\ref{fact:shift_1} gives that $E(1)\in \cA$.
 Since $\{i:E(i)\in\cA\}\neq\emptyset$, the number
$$K:=\max\{i:E(i)\in\cA\}$$
is well-defined.

Let $A\in\hA$. The walk $A$ must hit $(2,t+2)$ without hitting $(0,t)$ or
$(1,t+1)$. Also, since $D^t_s(1)\not\in\cA$, the walk $A$ must hit $(1,t)$. 
The weight of the family of all such walks is $t p^{t+2}q^2$. 
>From $(2,t+2)$, the walk $A$ moves to the right and hits $(3,t+2)$. 
Then it must not hit the line $y=x+t$. Lemma~\ref{lem:epsilon} implies that
this happens with probability less than $q(1-\alpha+\epsilon)$ where $\epsilon\rightarrow 0$ as $n$ tends to $\infty$. Let $n$ be sufficiently large that $\epsilon\leq \alpha$. 
 Then, we have that
\[
 \mup(\hA)< tp^{t+2}q^3(1-\alpha+\epsilon)\leq tp^{t+2}q^3. 
\]
For $\dhA$ and $\tA$ we use the trivial bounds
$\mup(\dhA)\leq \alpha^{t+1}$ and $\mup(\tA)\leq \alpha^{t+1}$ from Lemma~\ref{lem:mu_F}.
Consequently we have
\begin{equation}\label{extremal1a}
 \mup(\cA)=\mup(\hA)+\mup(\dhA)+\mup(\tA)<tp^{t+2}q^3+2\alpha^{t+1}.
\end{equation}

On the other hand,
\begin{align*}
 \dual_t(E(K))&=[t]\cup\{t+2\}\cup\{ t+5, \dots , t+5+K\}\\
 &\qquad\cup\{t+5+2j\in[n]:j=1,2,\ldots\},
\end{align*}
Since $\dual_t(E(K))\not\in\cB$, every walk $B\in\cB$ must hit one of $(0,t+1)$, $(1,t+2)$, $(2,t+2)$,
or the line $y=x+t+K$.
Thus we have
\begin{align}\label{extremal1b}
 \mup(\cB)&\leq p^{t+1} + (t+1)p^{t+2}q +
\((t+1)+\binom{t+2}2\)p^{t+2}q^2 + \alpha^{t+K}\nonumber\\
&\leq\(1+(t+1)pq+\frac{(t+1)(t+4)}2pq^2\)p^{t+1}+\alpha^{t+1}.
\end{align}

Inequalities~\eqref{extremal1a} and~\eqref{extremal1b}
 give that
\begin{align}\label{eq:temp_100}
\frac{\mup(\cA)\mup(\cB)}{\(\mup(\cF^t_1)\)^2} &\leq \frac{\(tp^{t+2}q^3+2\alpha^{t+1}\)\(1+(t+1)pq+\frac{(t+1)(t+4)}{2}pq^2+1/q^{t+1}\)p^{t+1}}{\((t+2)p^{t+1}q\)^2}\nonumber \\
&\leq \frac{\(tpq^3+\frac{2e^2}{q}\)\(1+(t+1)pq+\frac{(t+1)(t+4)}{2}pq^2+\frac{e^2}{q}\)}{(t+2)^2q^2},
\end{align} 
where the second inequality follows from~\eqref{eq:q^t}.
Since $pq, pq^2, pq^3$ and $1/q$ are increasing in $p$ for $p\leq 0.2$, expression~\eqref{eq:temp_100} is maximized when $p=2/(t+3)$. One can check that~\eqref{eq:temp_100} with $p=2/(t+3)$ is at most $0.99$ for $t\geq 180$.
Therefore, for $t\geq 180$,
\begin{equation}\label{eq:t=180}\mup(\cA)\mup(\cB)<0.99 \(\mup(\cF^t_1)\)^2,
\end{equation}
which completes our proof of Lemma~\ref{lem:extremal}.
\end{proof}
We have proved the inequality \eqref{eq:main} under 
Assumption~\ref{assum:maximal_shifted}. The uniqueness of the optimal
families in Theorem~\ref{thm:main} now follows from Lemma~\ref{lem:shifting2}.
This completes the proof of Theorem~\ref{thm:main}.

\appendix

\section{Omitted Calculations}

 \subsection{ Calculations for Claim \ref{clm:g(3,2)}}

 We give here the calculations for the cases $(s,s') = (3,3), (3,2)$ and $(2,2)$, omitted from the proof of Claim \ref{clm:g(3,2)}.   

\paragraph{Case: $(s,s') = (3,3)$}
 Noting that $u = v = t$ we get that 
 \begin{align*}
  f(u,3,p) = f(v,3,p) = \alpha^{t+1} + \binom{t+6}{3}\frac{t+1}{t+4}p^{t+3}q^3(1 - \alpha) < e^2\frac{p^{t+1}}{q} + \frac{(t+6)(t+5)(t+1)}{6}p^{t+3}q^3.
\end{align*}  
 Thus as $\mup(\cF^t_1) > (t+2)p^{t+1}q$ we get that
 \begin{align*}
    \frac{f(u,3,p)}{\mup(\cF^t_1)} & < \frac{e^2}{q^2(t+2)} + \frac{p^2q^2(t+6)(t+5)(t+1)}{6(t+2)} \\
                         & < \frac{2e^2}{t+2} + \frac{4(t+6)(t+5)(t+1)}{6(t+2)(t+3)^2}.
 \end{align*} 
 This is less than $.99$ for $t \geq 52$, its square $\frac{g(3,3)}{(\mup(\cF^t_1))^2}$ is also.  

\paragraph{Case: $(s,s') = (3,2)$}
 Noting that $u = t-1$ and $v = t+1$ we get that 
 \begin{align*}
  f(u,3,p) = \alpha^t + \binom{t+5}{3}\frac{t}{t+3}p^{t+2}q^3(1 - \alpha) < e^2p^t + \frac{(t+5)(t+4)t}{6}p^{t+2}q^3,
\end{align*}  
and we get 
 \begin{align*}
  f(v,2,p) = \alpha^{t+2} + \binom{t+5}{2}\frac{t+2}{t+4}p^{t+3}q^2(1 - \alpha) < \frac{e^2p^{t+2}}{q^2} + \frac{(t+5)(t+2)}{2}p^{t+3}q^2.
\end{align*}  
 Thus as $(\mup(\cF^t_1))^2 > (t+2)^2p^{2t+2}q^2$ we get that
 \begin{align*}
    \frac{g(3,2)}{(\mup(\cF^t_1))^2} & 
< \frac{e^4}{q^4(t+2)^2} + \frac{e^2p(t+5)}{2(t+2)} 
+ \frac{e^2p^2(t+5)(t+4)t}{6q(t+2)^2} 
+ \frac{p^3q^3(t+5)^2(t+4)t}{12(t+2)}\\
  & < \frac{2e^4}{(t+2)^2} + \frac{e^2(t+5)}{(t+2)(t+3)} 
+ \frac{4e^2(t+5)(t+4)t}{6(t+2)^{2}(t+3)(t+1)} + \frac{2(t+5)^2(t+4)t}{3(t+2)(t+3)^3}.
 \end{align*} 
 This is less than $.99$ for $t \geq 51$. 
 
\paragraph{Case: $(s,s') = (2,0)$}
 Noting that $u = t-2$ and $v = t+2$ we get that 
 \begin{align*}
  f(u,2,p) = \alpha^{t-1} + \binom{t+2}{2}\frac{t-1}{t+1}p^{t}q^2(1 - \alpha) < e^2\frac{p^{t-1}}q + \frac{(t+2)(t-1)}{2}p^{t}q^2,
\end{align*}  
and we get 
 \begin{align*}
f(v,0,p) = \alpha^{t+3} + p^{t+2}q(1 - \alpha) 
< \frac{e^2p^{t+3}}{q^3} + p^{t+2}q.
\end{align*}  
 Thus as $(\mup(\cF^t_1))^2 > (t+2)^2p^{2t+2}q^2$ we get that
 \begin{align*}
    \frac{g(2,0)}{(\mup(\cF^t_1))^2} 
        & < \frac{e^4}{q^6(t+2)^2} 
          + \frac{e^2}{(t+2)^2pq^2}
          + \frac{e^2p(t-1)}{2q^3(t+2)} 
          + \frac{(t-1)q}{2(t+2)}\\
        & < \frac{2e^4}{(t+2)^2} 
          + \frac{e^2(t+1)^3}{(t+2)^2t^2}
          + \frac{2e^2(t-1)}{(t+3)(t+2)} 
          + \frac{(t-1)(t+1)}{2(t+2)(t+3)}.
 \end{align*} 
 This is less than $.99$ for $t \geq 42$. 

\subsection{Proofs of Claims \ref{cl:23B}, \ref{cl:10A}
  and \ref{cl:10B} }


\begin{proof}[Proof of Claim~\ref{cl:23B}]\claimproof
Let $\epsilon>0$ be given and let $\delta=\epsilon/b_1(p,t)$.
As $s' = 1$ we have $\mu_p(\cB)=\mu_p(\tB)+\mu_p(\cB_1)$.  
Noting that $D^{t-1}_2(I)\in\cA$ and 
\[
\dual_t(D^{t-1}_2(I))=[t+1]\cup[t+4,t+4+I]\cup\{t+6+I,t+8+I,\ldots\}\not\in\cB 
\]
we see that every walk in $\cB$ must hit at least one of 
$(0,t+2)$, $(1,t+2)$, and the line $L:y=x+(t+1+I)$. 
Also all walks in $\tB\subset\cF^v=\cF^{t+1}$ hit the line $L':y=x+(t+2)$.
Thus we get
\begin{align}\label{eq:tB}
\mu_p(\tB)&\leq \mu_p(\mbox{walks in $\tB$ hitting $L$})\nonumber \\ 
&\quad+\mu_p(\mbox{walks in $\tB$ not hitting $L$ but hitting $(0,t+2)$})\nonumber \\
&\quad+\mu_p(\mbox{walks in $\tB$ not hitting $L$ or $(0,t+2)$ 
but hitting $(1,t+2)$ and $L'$})\nonumber \\
&\leq \alpha^{t+1+I}+p^{t+2}(1-\alpha^{I-1}+\delta) 
+(t+2)p^{t+2}q(\alpha-\alpha^{I}+\delta/2). 
\end{align} 

On the other hand, as $D^{t+1}_1(J+1)\not\in\cB$, we have that 
$\cB_1 \subset \bF^{t+1}_1 \setminus \cW$ where 
\[ \cW = \{W\in\hF^{t+1}\cap\cF_1^{t+1}:W\to D_1^{t+1}(J+1)\}. \]
Walks in $\bF^{t+1}_1$ hit $(1,t+2)$ without hitting $(0,t+2)$
then do not go above $y = x + t + 1$ so we have that
\begin{equation}\label{eq:hB}
 \mu_p(\bF^{t+1}_1) \leq  (t+2)p^{t+2}q(1-\alpha+\delta/2).
\end{equation}
Walks in $\cW$ are those in $\bF^{t+1}_1$ that after hitting $(1,t+2)$ go over
to $(J + 2, t+2)$, which is
on the line $y = x + t - J$, and then 
never cross this line.   
So 
\begin{equation}\label{eq:eB}
 \mu_p(\cW) \geq (t+1)p^{t+2}q^{J+2}(1-\alpha).
\end{equation}

Combining~\eqref{eq:tB}, \eqref{eq:hB}, and \eqref{eq:eB} yields that
\[
\mu_p(\cB)/p^{t+2} \leq q^{-(t+2)}\alpha^{I-1} + (1-\alpha^{I-1}+\delta) 
+ (t+2)q(1 - \alpha^I + \delta) - (t+1)q^{J+2}(1 - \alpha).
\]
Rearranging we get 
\[
\mu_p(\cB) / p^{t+2} \leq (1+\delta)b_1(p,t)+b_2(p,t)\alpha^{I-1} - b_3(p,t)q^{J-2}, 
 \]
as needed.

Since $\frac{\partial}{\partial p} b_2(p,t)=(t+2)(q^{-(t+3)}-1)>0$ 
it follows that
$b_2(p,t)\leq b_2(\frac2{t+3},t)=(\frac{t+3}{t+1})^{t+2} - 1 -\frac{2(t+2)}{t+3}$,
which is decreasing in $t$, so for $t\geq 18$, 
$b_2(p,t) \leq b_2(\frac2{t+3},t) \leq b_2(\frac2{21},18)< 4.5$.

As $q^3(1 - \alpha)$ is decreasing in $p$, 
we get, by evaluating it at $p = 2/(t+3)$, that 
$  b_3(p,t)   \geq q\frac{(t-1)(t+1)^3}{(t+3)^3} > (t-7)q$.
\end{proof}    


 \begin{proof}[Proof of Claim~\ref{cl:10A}]\claimproof
Let $\epsilon$ be given and let $\delta=\epsilon/(tq)$.
We use that $\cA = \tA \cup \cA_1$.
   We have by arguments similar to in the proof of Claim \ref{cl:22A}, or from 
   the inequalities (11) and (12) of \cite{F},  that

\begin{align*}
\mup(\tA)  &\leq \alpha^{t+J-1} + p^t + tp^tq\alpha,\\
\mup(\cA_1)& \leq\mup(\bF_1^{t-1}) < tp^tq(1-\alpha+\delta),
\end{align*}

where the second inequality follows by choosing $n$ sufficiently large.
  
We also use that $\cA_1 \subset \bF_1^{t-1} \setminus \cW$, where
     \[ \cW = \{W\in\hF^{t-1}\cap\cF_1^{t-1}:W\to D^{t-1}_1(I+1)\}. \]
   A path $W \in \cW$ hits $(1,t)$ without hitting $(0,t)$ and then goes 
   over to $(I+2,t)$ on the line $y = x + (t - I - 2)$, and afterwards never
   crosses this line.
   So 
    \[ \mup(\cW) > (t-1)p^tq^{I+2}(1-\alpha). \]
   Together, this gives 

\begin{align*}
\mup(\cA) &< \alpha^{t+J-1} + p^t + tp^tq (1+\delta)-(t-1)p^tq^{I+2}(1-\alpha)\\
&<p^t\left(
(1+tq) + \frac{\alpha^{J-1}}{q^t}-(t-1)q^{I+2}(1-\alpha)+tq\delta
\right),
\end{align*}

which yields the main inequality of the claim. 
      
    That $q^{-t} < e^2 < 7.4$ was observed in the proof of Claim \ref{cl:22A}
   and that $(t-1)q^3(1-\alpha) > (t-7)q$
   can be shown as in the proof of Claim \ref{cl:23B}.  
 \end{proof}


\begin{proof}[Proof of Claim~\ref{cl:10B}]\claimproof
Let $\epsilon$ be given and let $\delta=p\epsilon$.
 We use that $\cB = \tB \cup \cB_0 \subset \tB \cup 
(\bF^{t+1}_0 \setminus \cW),$ where  
    \[ \cW = \{W\in\hF^{t+1}\cap\cF_0^{t+1}:W\to D_0^{t+1}(J+1)\}. \]
 From the inequalities (14) and (15) of \cite{F} we have that
  \[ \mup(\tB) + \mup(\bF_0^{t+1}) \leq \alpha^{t + 1 + I} + p^{t+1}(1+\delta). \]
 A path $W$ in $\cW$ hits $(0,t+1)$ goes over to $(J+1,t+1)$ on the line
 $y = x + (t-J)$, and then never crosses this line, so 
     \[ \mup(\cW) \geq p^{t+1}q^{J+1}(1-\alpha). \]
Consequently it follows that
   \[ \mup(\cB)  \leq p^{t+1} + \alpha^{t + 1 + I} -  p^{t+1}q^{J+1}(1-\alpha)
     + p^{t+1}\delta,\]
   which yields the main inequality of the claim. 
  The bound for $b_2(p,t)$ was shown in the proof of the previous claim, and the bound
  for $b_3(p,t)$ can be verified for $t \geq 20$. 
\end{proof}

\subsection{Other Computations}\label{app:othercomp}

 We verify that $y/z$ is decreasing for Case 1 of Claim \ref{cl:23B}.
 Recall that $z = t+2 - (t+1)p = (t+2)q + p$ and that
 $y = 1 + (t+2)q + 4.5\alpha^2 = z + (4.5\alpha^2 +q)$,
 so $y/z = 1 + \frac{r}{z}$, where $r =4.5\alpha^2 +q$.
 We show this is increasing by showing that 
 $zr' - rz' > 0$, where $r'$ and $z'$ denote derivatives with respect to $p$.  
 Noting that $\frac{\partial}{\partial p}q = -1$ and
  $\frac{\partial}{\partial p}\alpha = q^{-2}$
 we compute
 \begin{align*}
   zr' - rz' &=  
z(9\alpha q^{-2}-1)-r(-(t+1))\\
       &=  4.5(t+1)\alpha^2  + 9(t+2)\frac{p}{q^2} + 9\frac{p^2}{q^3} - 1 \\ 
       &>   9(t+2)\frac{p}{q^2} - 1 > 9(t+2)\frac{t+1}{t^2} - 1 > 8, 
 \end{align*} 
  as needed.  
  In the last line we use $p = \frac{1}{t+1}$ as $p/q^2$ is increasing in $p$.     
\end{document}